\definecolor{linkcolor}{rgb}{0.1,0,0.7}
\definecolor{urlcolor}{rgb}{1,0,0}
\newtheorem{theorem}{Theorem}[section]
\newtheorem{lemma}[theorem]{Lemma}
\newtheorem{corollary}[theorem]{Corollary}
\newtheorem{proposition}[theorem]{Proposition}
\newtheorem{remark}[theorem]{Remark}
\title{Optimal consumption under a drawdown constraint over a finite horizon}
\author{
Xiaoshan Chen\thanks{School of Mathematical Science, South China Normal University, Guangzhou 510631, China, Email:\texttt{xschen@m.scnu.edu.cn}.}, \quad
Xun Li\thanks{Department of Applied Mathematics, The Hong Kong Polytechnic University, Hung Hom, Kowloon, Hong Kong. Email:\texttt{li.xun@polyu.edu.hk}.}, \quad
Fahuai Yi\thanks{School of Mathematical Science, South China Normal University, Guangzhou 510631, China, Email:\texttt{fhyi@scnu.edu.cn}.}, \quad
Xiang Yu\thanks{Department of Applied Mathematics, The Hong Kong Polytechnic University, Hung Hom, Kowloon, Hong Kong. Email:\texttt{xiang.yu@polyu.edu.hk}. Corresponding author.}}
\date{}
\begin{document}
\maketitle

\vspace{-0.4in}
\begin{abstract}
This paper studies a finite horizon utility maximization problem on excessive consumption under a drawdown constraint. Our control problem is an extension of the one considered in \cite{ABY19} to the model with a finite horizon and an extension of the one considered in \cite{JeonOh2022JMAA} to the model with zero interest rate. Contrary to \cite{ABY19}, we encounter a parabolic nonlinear HJB variational inequality with a gradient constraint, in which some time-dependent free boundaries complicate the analysis significantly. Meanwhile, our methodology is built on technical PDE arguments, which differs from the martingale approach in \cite{JeonOh2022JMAA}. Using the dual transform and considering the auxiliary variational inequality with gradient and function constraints, we establish the existence and uniqueness of the classical solution to the HJB variational inequality after the dimension reduction, and the associated free boundaries can be characterized in analytical form. Consequently, the piecewise optimal feedback controls and the time-dependent thresholds for the ratio of wealth and historical consumption peak can be obtained.

\vskip 0.2 true in \noindent {\bf Keyword:} Optimal consumption, drawdown constraint, parabolic variational inequality,
gradient constraint, free boundary.

\end{abstract}

\section{Introduction}
\setcounter{equation}{0}

Optimal portfolio and consumption via utility maximization has always been one of the core research topics in quantitative finance. Starting from the seminal works \cite{Merton69} and \cite{Mert1971JET}, a large amount of studies can be found in the literature by considering various incomplete market models, trading constraints, risk factors, etc. One notable research direction is to refine the measurement of consumption performance by encoding the impact of past consumption behavior. The so-called addictive habit formation preference recommends that the utility shall be generated by the difference between the current consumption rate and the historical weighted average of the past consumption. In addition, the infinite marginal utility mandates the addictive habit formation constraint that the consumption level needs to stay above the habit formation level, representing that the agent's standard of living can never be compromised. Along this direction, fruitful results can be found in various market models, see among \cite{constantinides1990habit}, \cite{DZ92}, \cite{schroder2002}, \cite{Munk08}, \cite{EK09}, \cite{Yu15}, \cite{Yu17}, \cite{YY22}, \cite{Bah2022}, \cite{Bo22}, \cite{Bah2023} and references therein.

Another rapidly growing research stream is to investigate the impact of the past consumption maximum instead of the historical average. The pioneering work \cite{Dy95} examines an extension of Merton's problem under a ratcheting constraint on consumption rate such that the consumption control needs to be non-decreasing. \cite{Arun12} later generalizes the model in  \cite{Dy95} by considering a drawdown constraint such that the consumption rate can not fall below a proportion of the past consumption maximum. \cite{ABY19} revisit the problem by considering a drawdown constraint on the excessive dividend rate up to the bankruptcy time, which can be regarded as an extension of the problem in \cite{Arun12} to the model with zero interest rate. \cite{JeonMMOR} further extend the work in \cite{Arun12} by considering general utility functions using the martingale duality approach where the dual optimal stopping problem is examined therein. \cite{Tanana} employs the general duality approach and establishes the existence of optimal consumption under a drawdown constraint in incomplete semimartingale market models. \cite{JeonOh2022JMAA} recently generalizes the approach in \cite{JeonMMOR} to the model with a finite horizon and addresses the existence of a solution to the dual optimal stopping problem. 

On the other hand, inspired by the time non-separable preference rooted in the habit formation preference, some recent studies also incorporate the past consumption maximum into the utility function as an endogenous reference level. \cite{GHR20} first adopt the Cobb-Douglas utility that is defined on the ratio between the consumption rate and the past consumption maximum and obtain the feedback controls in analytical form. \cite{DLPY} choose the same form of the linear habit formation preference and investigate an optimal consumption problem when the difference between the consumption rate and the past spending peak generates the utility. Later, the preference in \cite{DLPY} is generalized to an S-shaped utility in \cite{LiYuZ2021arXiv} to account for the agent's loss aversion over the relative consumption. \cite{LiangLuoY2022arXiv} extends the work in \cite{DLPY} by considering the change of risk aversion parameter concerning the reference level where an additional drawdown constraint is also required. \cite{LiYuZ2022} incorporates the dynamic life insurance control and expected bequest with an additional drawdown constraint on the consumption control.

As an important add-on to the existing literature, the present paper revisits the optimal excessive consumption problem under drawdown constraint as in \cite{ABY19}, however, with a finite investment horizon. We summarize the main contributions of the present paper as two-fold:

\begin{itemize}
\item[(i)] From the modelling perspective, it is well documented that heterogenous agents may have diverse choices of investment horizons in practice. In fact, as the agent's time horizon changes, the risk tolerance should be adjusted accordingly. Typically, agents seek more stable assets for short-term horizon and would call for a more aggressive strategy for the longer-term investment. Our model and research outcomes to incorporate a terminal horizon provide the flexibility to meet versatile needs in applications with different choices of investment horizons.

\hspace{0.3in}Similar to \cite{ABY19}, we adopt the classical power utility $U(x)=\frac{x^{1-p}}{1-p}$ with the relative risk aversion coefficient $0<p<1$. On one hand, many empirical studies suggest that the risk aversion of the agent should be relatively constant over wealth levels, which is an important merit of the power utility function. On the other hand, the power utility function enjoys the homogeneity, which enables us to reduce the dimension of the HJB equation by changing variables (see \eqref{changevb}). As a result, it is sufficient for us to focus on one state variable in studying the HJB equation and its dual PDE, which significantly simplifies the technical proofs to establish the regularity of the solution and the analytcial characterization of the time-dependent free boundary curves.        

\item[(ii)] From the methodology perspective, we encounter the associated parabolic HJB variational inequality with gradient constraint. It is well known that the global regularity of the parabolic problem requires a more delicate analysis of the time-dependent free boundaries and the smooth fit conditions. Some previous arguments in \cite{ABY19} crucially rely on the constant free boundary points as well as some explicit expressions of the value function, which are evidently not applicable in our framework due to time dependence. The present paper contributes to some new and rigorous proofs to establish the existence and the uniqueness of the classical solution to the associated HJB variational inequality \eqref{eq:VIU} (see Theorem \ref{thm:U}). Moreover, to verify that the dual transform is well defined, we can show that the solution $U$ is indeed increasing and concave in $\omega$ (see Lemma \ref{lem:conc}). 

\hspace{0.3in}We stress that, in many existing studies using the dual transform, see \cite{CCY12}, \cite{CY12}, \cite{DY09}, \cite{GYC19} among others, the dual domain is often the entire $\mathbb R^+\times(0,T]$, which facilitates some classical PDE arguments. In our framework, due to the drawdown constraint $c_t\geq \alpha z_t$ (see \eqref{drawcc} and \eqref{drawzz} for the description of the constraint), there exists a threshold for the ratio of the wealth level and the past consumption peak such that the minimum consumption plan needs to maintain at a subsistence level (see \eqref{solc*}) whenever the ratio falls  below that threshold. As a result, the left boundary $\omega=0$ is mapped to an unknown finite boundary $y_0(t)<\infty$, $t>0$. Employing the boundary condition at $y_0(t)$ in \eqref{eq:VIu}, we extend the variational inequality with a gradient constraint to an auxiliary variation inequality \eqref{eq:VIhatu} with both function and gradient constraints in the unbounded domain $\mathbb R^+\times(0,T]$. We can then apply several further transformations (see Propositions \ref{thm:v}, \ref{thm:tildeu} and \ref{thm:u}) and modify some technical arguments in \cite{CLY19} and \cite{CY12} to establish the existence and uniqueness of the solution in $C^{2,1}$ regularity to the primal parabolic HJB variational inequality (see Theorem \ref{thm:U}). More importantly, we provide some tailor-made and technical arguments to characterize the associated time-dependent free boundaries in analytical form such that the smooth fit conditions hold (see Theorem \ref{thm:Ufb}).
\end{itemize}

With the help of our new PDE results, we can derive the optimal consumption and portfolio in piecewise feedback form and identify all analytical time-dependent thresholds for the ratio between the wealth level and the past consumption maximum, dividing the domain into three regions for different consumption behaviors (see Theorem \ref{thm:V}). In addition, the analytical threshold functions allow us to theoretically verify their quantitative dependence on the constraint parameter $\alpha$.

The rest of the paper is organized as follows. In Section \ref{sec:model}, we introduce the market model and the utility maximization problem under a consumption drawdown constraint. By dimension reduction using the homogeneity of power utility, we formulate the associated HJB variational inequality and its dual problem. In Section \ref{sec:dual}, we first analyze the dual linear parabolic variational inequality by considering some auxiliary problems with gradient and function constraints. By showing the existence and uniqueness of the solution to the auxiliary problems and characterizing their free boundaries, we obtain the unique classical solution to the dual HJB variational inequality. In Section \ref{sec:control}, using the results from the dual problem, we establish the unique classical solution to the primal HJB variational inequality and verify the optimal feedback controls and all associated time-dependent thresholds in analytical form. In Section \ref{sec:con}, we conclude our theoretical contributions and discuss some future research directions.

\section{Market Model and Problem Formulation}\label{sec:model}
\setcounter{equation}{0}
\subsection{Model setup}
Let $(\Omega, \mathcal{F}, \mathbb{F}, \mathbb{P})$ be a standard filtered probability space, where $\mathbb{F}=(\mathcal{F}_t)_{t\in [0,T]}$ satisfies the usual conditions. We consider a financial market consisting of one riskless asset and one risky asset, and the terminal time horizon is denoted by $T$. The riskless asset price satisfies $dB_t=rB_tdt$ where $r\geq 0$ represents the constant interest rate. The risky asset price follows the dynamics
\begin{equation}
dS_t=S_t(\mu+r) dt+S_t\sigma dW_t,\ \ t\in [0,T],\nonumber
\end{equation}
where $W$ is an $\mathbb{F}$-adapted Brownian motion and the drift $(\mu+r)$ and volatility $\sigma>0$ are given constants. It is assumed that the excessive return $\mu>0$, i.e., the risky asset's return is higher than the interest rate.

Let $(\pi_t)_{t\in [0,T]}$ represent the dynamic amount that the investor allocates in the risky asset and $(C_t)_{t\in [0,T]}$ denote the dynamic consumption rate by the investor. In this paper, we consider a drawdown constraint on the excess consumption rate $c_t:=C_t-rX_t$ in the sense that $c_t$ cannot go below a fraction $\alpha\in(0,1)$ of its past maximum that
\begin{align}\label{drawcc}
c_t\geq\alpha z_t, \quad t\in [0,T].
\end{align}

Here, the non-decreasing reference process $(z_t)_{t\in [0,T]}$ is defined as the historical excessive spending maximum
\begin{align}\label{drawzz}
z_t=\max\Big\{z,\ \sup\limits_{s\leq t}c_s\Big\},
\end{align}
and $z\geq 0$ is the initial reference level. 

In our setting, $rX_t$ stands for the subsistence consumption level that may refer to some mandated daily expenses in practice. On top of that, the drawdown constraint \eqref{drawcc} is imposed to reflect some practical situations that some decision makers may psychologically feel painful when there is a decline in the consumption plan comparing with the past spending peak. In particular, some large expenditures not only spur some long term continuing spending such as maintenance and repair, but also lift up the investor’s standard of living that affects the future consumption decisions. Moreover, by interpreting the consumption control as dividend payment, \cite{ABY19} also discussed another motivation of the drawdown constraint that some shareholders feel disappointment when there is a decline in dividend policies.

The self-financing wealth process $X_t$ is then governed by the SDE that
\begin{align*}
dX_t = (r(X_t-\pi_t)-C_t)dt + \pi_t\left((\mu+r)dt+\sigma dW_t\right),
\end{align*}
which can be equivalently written by
\begin{align}
dX_t= (\mu\pi_t-c_t)dt+\sigma\pi_tdW_t,\ \ t\in[0,T],\label{wealthpro} 
\end{align}
with the initial wealth $X_0=x\geq 0$. 

Let $\mathcal{A}(x)$ denote the set of admissible controls $(\pi_t, c_t)$ if $(\pi_t)_{t\in[0,T]}$ is $\mathbb{F}$-progressively measurable, $(c_t)_{t\in[0,T]}$ is $\mathbb{F}$-predictable, the integrability condition $\mathbb{E}[\int_0^T (c_t+\pi_t^2)dt]<+\infty$ holds and the drawdown constraint $c_t\geq\alpha z_t$ is satisfied a.s. for all $t\in[0,T]$.

The goal of the agent is to maximize the expected utility of the excessive consumption rate up to time $T\wedge \tau$ under the drawdown constraint, where $\tau$ is the bankruptcy time defined by $\tau:=\inf\big\{t\geq0 ~ | ~ X_t\leq 0\big\}$.

To embed the control problem into a Markovian framework and facilitate the dynamic programming arguments, we choose the consumption running maximum process $z_t$ in \eqref{drawzz} as the second state process. The value function of the stochastic control problem over a finite time horizon is given by
\begin{eqnarray}\label{eq:Valuefunction}
V(x,z,t)=\sup\limits_{(\pi,c)\in\mathcal{A}(x)}\mathbb{E}\left[\int_t^{T\wedge\tau}e^{-\delta (s-t)}\frac{c_s^{1-p}}{1-p}ds+e^{-\delta (T\wedge \tau-t)}\frac{X_{T\wedge \tau}^{1-p}}{1-p}\Bigg|X_t=x, z_t=z\right],
\end{eqnarray}
where the constant $\delta\geq 0$ denotes the subjective time preference parameter, and $0<p<1$ stands for the agent's relative risk aversion coefficient.

We first note that the problem \eqref{eq:Valuefunction} is an extension of the control problem considered in \cite{ABY19} to the model with a finite horizon. Some new mathematical challenges arise as we encounter a parabolic variational inequality with time-dependent free boundaries. One main contribution of the present paper is that all time-dependent free boundaries stemming from the control constraint $\alpha z_t\leq c_t\leq z_t$ can be characterized in analytical form (see \eqref{solvo*}, \eqref{solvo1} and \eqref{solvoa} in Thereom \ref{thm:Ufb}), allowing us to identify time-dependent thresholds for the ratio between the wealth level and the past consumption running maximum to choose among different optimal feedback consumption strategies, see \eqref{solc*} in Theorem \ref{thm:V}.  

On the other hand, if we interpret $c_t$ in problem \eqref{eq:Valuefunction} as a control of consumption rate and regard $X_t$ in \eqref{wealthpro} as the resulting wealth process under the control pair $(\pi_t, c_t)$, then the control problem \eqref{eq:Valuefunction} is actually equivalent to a finite horizon optimal consumption problem under a drawdown constraint in the financial market with zero interest rate (as $r=0$ in the SDE \eqref{wealthpro} of $X_t$ under the control $(\pi_t,c_t)$). The finite horizon optimal consumption problem under a drawdown constraint has been studied by \cite{JeonOh2022JMAA}, however, under the crucial assumptions that the interest rate $r>0$ and the initial wealth is sufficiently large that $X_t>\alpha z_{t-}\frac{1-e^{-r(T-t)}}{r}$ (see Assumption 3.1 in \cite{JeonOh2022JMAA}). Based on the martingale and duality approach, the original control problem in \cite{JeonOh2022JMAA} is transformed into an infinite series of optimal stopping problems, which requires the optimal wealth process to be strictly positive that $X_t>0$ and hence $X_t>\alpha z_{t-}\frac{1-e^{-r(T-t)}}{r}$ has to be mandated as an upper bound on $z_t$. 

In contrast, we have interest rate $r=0$ in the equivalent optimal consumption problem, and no constraint on $X_t$ or $z_t$ is imposed because we allow the wealth process to hit zero on or before the terminal horizon $T$ and will terminate the investment and consumption control once the bankruptcy occurs. Therefore, the main results and the approach in \cite{JeonOh2022JMAA} can not cover our problem \eqref{eq:Valuefunction} with $r=0$ and $x\geq 0$. Instead, we rely on the technical analysis of the HJB variational inequality. Note that the interest rate $r$ does not appear in SDE \eqref{wealthpro} and the objective function in \eqref{eq:Valuefunction}, our main results, especially all associated wealth thresholds, do not depend on $r$. Some major challenges are characterizing the free boundaries caused by the control constraint  $\alpha z_t\leq c_t\leq z_t$ and the global regularity of the solution to the HJB variational inequality. 

 \subsection{The control problem and main result}

By heuristic dynamic programming arguments and the martingale optimality condition, we note that the term $\partial_zV=0$ holds whenever the monotone process $z_t$ is strictly increasing, and the associated HJB variational inequality can be written as
\begin{equation}\label{HJBV}
\left\{\begin{array}{l}
\max\Big\{\sup\limits_{\substack{\pi\in\mathbb{R},\\ \alpha z\leq c\leq z}}\left[\partial_tV+\frac 1 2\sigma^2\pi^2\partial_{xx}V+(\mu\pi-c)\partial_xV-\delta V+\frac{c^{1-p}}{1-p}\right], \partial_zV\Big\}=0,\ (x,z,t)\in Q, \\
V(0,z,t)=0, \quad\quad z>0,t\in[0,T), \\
V(x,z,T)=\frac{x^{1-p}}{1-p}, \quad\quad x\geq0,\;z>0,
\end{array}\right.
\end{equation}
where $Q:=(0,+\infty)\times(0,+\infty)\times[0,T).$

It is straightforward to see that the value function $V(x,z,t)$ in \eqref{eq:Valuefunction} is homogeneous of degree $1-p$ with respect to $x$ and $z$ such that
$V(\beta x,\beta z,t)=\beta^{1-p} V(x,z,t)$. As a result, we can consider the change of variable 
\begin{align}\label{changevb}
\omega:=\frac{x}{z}\geq 0,
\end{align}
and reduce the dimension that
\begin{equation}\label{eq:VUdef}
V(x,z,t)=z^{1-p}V\Big(\frac{x}{z},1,t\Big) =: z^{1-p}U\Big(\frac{x}{z},t\Big)=z^{1-p}U\Big(\omega,t\Big).
\end{equation}
It then follows that
\begin{eqnarray}\label{eq:VU}
\left\{\begin{array}{ll}
\partial_tV = z^{1-p}\partial_tU, \\
 \partial_xV = z^{1-p}(\frac{1}{z})\partial_\omega U = z^{-p}\partial_\omega U, \\
 \partial_{xx}V = z^{-(1+p)}\partial_{\omega\omega}U, \\
\partial_zV = (1-p)z^{-p}U+z^{1-p}(-\frac{x}{z^2})\partial_{\omega}U = z^{-p}[(1-p)U-\omega \partial_{\omega}U].
\end{array}\right.
\end{eqnarray}
Moreover, let us consider the auxiliary controls $\hat{\pi}(\omega, t)=\frac{\pi(x,z,t)}{z}$ and $\hat{c}_t(\omega, t)=\frac{c(x,z,t)}{z}$. The HJB equation \eqref{HJBV} can be rewritten as
\begin{eqnarray}\label{eq:HJBU}
\left\{\begin{array}{l}
\max\Big\{\partial_tU+\sup\limits_{\hat\pi\in\mathbb R}\left\{\frac 1 2\sigma^2\hat\pi^2\partial_{\omega\omega}U+\mu \hat\pi\partial_{\omega}U\right\}+\sup\limits_{ \alpha\leq\hat c\leq1}\left\{\frac{\hat c^{1-p}}{1-p}-\hat c\partial_{\omega}U\right\}-\delta U, \\
\hfill (1-p)U-\omega\partial_{\omega}U\Big\} = 0, \quad (\omega,t)\in\mathcal Q,\\
U(0,t) = 0, \quad t\in[0,T),\\
U(\omega,T) = \frac{\omega^{1-p}}{1-p}, \quad \omega\geq0,
\end{array}\right.
\end{eqnarray}
where $\mathcal Q:=\mathbb R^+\times [0,T)$.

We first present the main result of the paper, and its proof is deferred to Section \ref{sec:control}.

\begin{theorem}[Verification Theorem]\label{thm:V}

There exists a unique classical solution $U(\omega,t)\in C^{2,1}(\mathcal Q)$ to problem \eqref{eq:HJBU}, and $V(x,z,t):=z^{1-p}U(\frac{x}{z},t) )\in C^{2,1}(Q)\cap C(\bar Q) $ is the unique classical solution to problem \eqref{HJBV}. Moreover, we have that
\begin{eqnarray}\label{eq:pxxV}
\partial_{xx}V(x,z,t)<0, \quad (x,z,t) \in Q,
\end{eqnarray}
The optimal feedback controls of the problem \eqref{eq:Valuefunction} are given by
\begin{eqnarray}
&&\pi^*(x,z,t)=z\hat \pi^*\left(\frac{x}{z}, t\right)=z\left[-\frac{\mu}{\sigma^2}\frac{\partial_\omega U (\frac{x}{z}, t)}{\partial_{\omega\omega}U(\frac{x}{z}, t)}\right],\label{solpi*} \\
&&c^*(x,z,t)=z{\hat c}^*\left(\frac{x}{z},t\right)=\left\{\begin{array}{ll}
\alpha z, & \mbox{if }  0 < x \leq \omega_\alpha(t)z, \\
(\partial_\omega U)^{-\frac{1}{p}}\left(\frac{x}{z}, t\right)z, & \mbox{if } \omega_\alpha(t)z< x < \omega_1(t)z, \\
z, & \mbox{if } \omega_1(t)z \leq x \leq \omega^*(t)z, \\
\end{array}\right. \label{solc*} \\
 &&\mathcal D=\{x,z,t)\in Q ~ | ~ \partial_zV(x,z,t)=0\}= \{(x,z,t)\in Q ~ | ~ x\geq\omega^*(t)z\}, \label{fb:V1} \\
 &&\mathcal C=\{x,z,t)\in Q ~ | ~ \partial_zV(x,z,t)<0\}=\{(x,z,t)\in Q ~ | ~ x<\omega^*(t)z\}, \label{fb:V2}
\end{eqnarray}
where $ \omega_\alpha(t), \omega_1(t)$ and $\omega^*(t)$ are free boundaries to problem \eqref{eq:VIU}, which are characterized analytically in Theorem \ref{thm:Ufb}. 
\end{theorem}

\begin{remark}\label{rmk:omega} In what follows, we first elaborate the intuition behind the existence of the free boundary $\omega^*(t)$ such that $c^*(x,z,t)=z$ for $\omega_1(t)z\leq x\leq \omega^*(t)z$ in Theorem \ref{thm:V}.

In Section 4 of \cite{JeonOh2022JMAA}, it is shown by duality and martingale approach that there exists an optimal adjustment boundary $\omega^*(t)$ such that: (i) if $\alpha\frac{1-e^{-r(T-t)}}{r} X^*_t/z^*_{t}<\omega^*(t)$, the optimal consumption satisfies $\alpha z_t^*\leq c^*_t\leq z_{t}^*$; (ii) if $X^*_t/z^*_{t}\geq \omega^*(t)$, the resulting running maximum process $z_t^*$ is increasing and if $X^*_t/z^*_{t-}>\omega^*(t)$, the optimal consumption can be chosen in the form of $c_t^*=X^*_{t}/\omega^*(t)>z^*_{t-}$ such that the resulting $z^*_t$ immediately jumps from $z^*_{t-}$ to a new global maximum level $z^*_t=X^*_{t}/\omega^*(t)$. As a result, the free boundary $\omega^*(t)$ can be used to split the whole domain of $(x,z,t)$ into two regions connected in $t$ variable and the controlled two dimensional process $(X_t^*, z_t^*)$ only diffuses within the region $\{ X_t^*\leq z_t^* \omega^*(t)\}$ for any $0<t\leq T$. The only possibility for $X_t^*>z_{t-}^* \omega^*(t)$ to occur is at the initial time $t=0$, at which instant the process $z_{0-}^*=z$ jumps immediately to $x/\omega^*(0)$.

Motivated by the result in \cite{JeonOh2022JMAA}, we conjecture and will verify later in our model that there also exists such a time-dependent free boundary $\omega^*(t)$, which is the critical threshold of the wealth-to-consumption-peak ratio under the optimal control $(\pi^*,c^*)$ such that if $X^*_{t}>\omega^*(t)z^*_{t-}$, the resulting $z^*_t$ immediately jumps from $z^*_{t-}$ to a new global maximum level $z^*_t=X^*_{t}/\omega^*(t)$. If $X^*_t\leq\omega^*(t)z^*_t$, the agent chooses the excessive consumption rate staying in $[\alpha z^*_t, z^*_t]$. 

Given this conjectured free boundary $\omega^*(t)$, the domain $Q=(0,+\infty)\times(0,\infty)\times [0,T)$ can be split into two regions $\mathcal{D}:=\{(x,z,t)\in Q| x\geq \omega^*(t)z\}$ and $\mathcal{C}:=\{(x,z,t)\in Q| x<\omega^*(t)z\}$, which are connected in the time variable $t$.  We will first study the existence of a classical solution to the HJB variational inequality with the time-dependent free boundary $\omega^*(t)$ and characterize $\omega^*(t)$ in the analytical form in Theorem \ref{thm:Ufb}. Building upon the classical solution to the HJB variational inequality, we then derive the piecewise feedback functions for the optimal control. Finally, the verification theorem on the optimal control guarantees the validity of our conjecture and the existence of such a free boundary $\omega^*(t)$ for the optimal control $(\pi^*, c^*)$. We note that a similar hypothesis on the existence of a constant free boundary $\omega^*$ (independent of $t$) is also made and verified in \cite{ABY19} for the infinite horizon stochastic control problem using the verification theorem on the optimal control.

\end{remark} 
 
Based on the variational inequality \eqref{HJBV} and the main results in Verification Theorem \ref{thm:V}, we plot the numerical illustrations of the value function, the optimal feedback functions of portfolio and consumption in terms of the wealth variable $x$ while fixing $z=1$ and $t=0.5$ in Figure \ref{fig-V} as below:

\begin{figure}[H]
    \centering
\includegraphics[width=4.5cm]{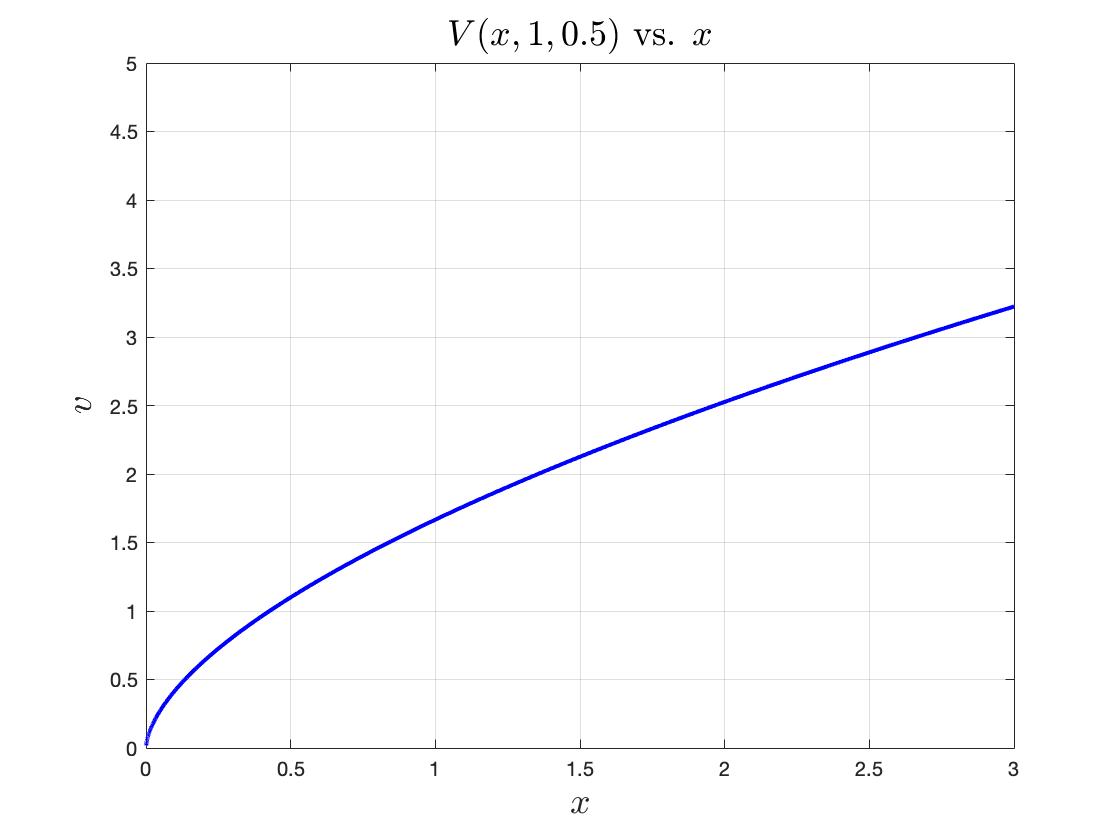}\quad
\includegraphics[width=4.5cm]{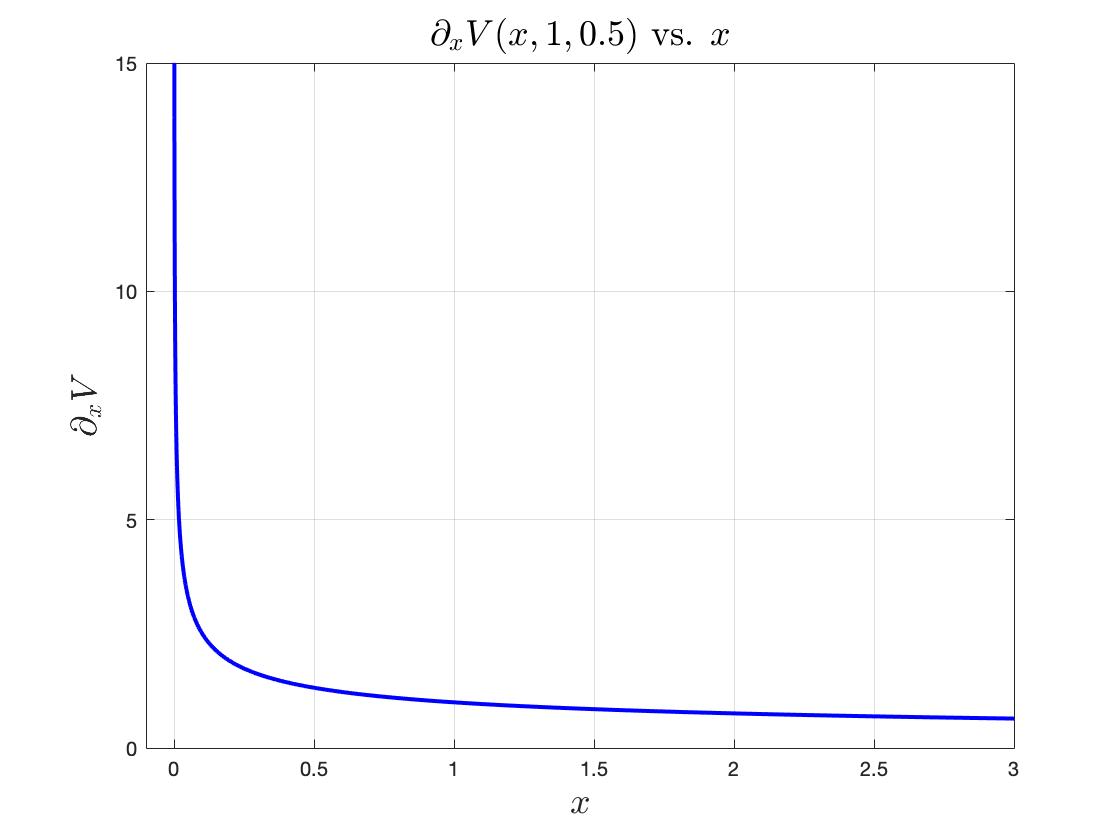}\quad
\includegraphics[width=4.5cm]{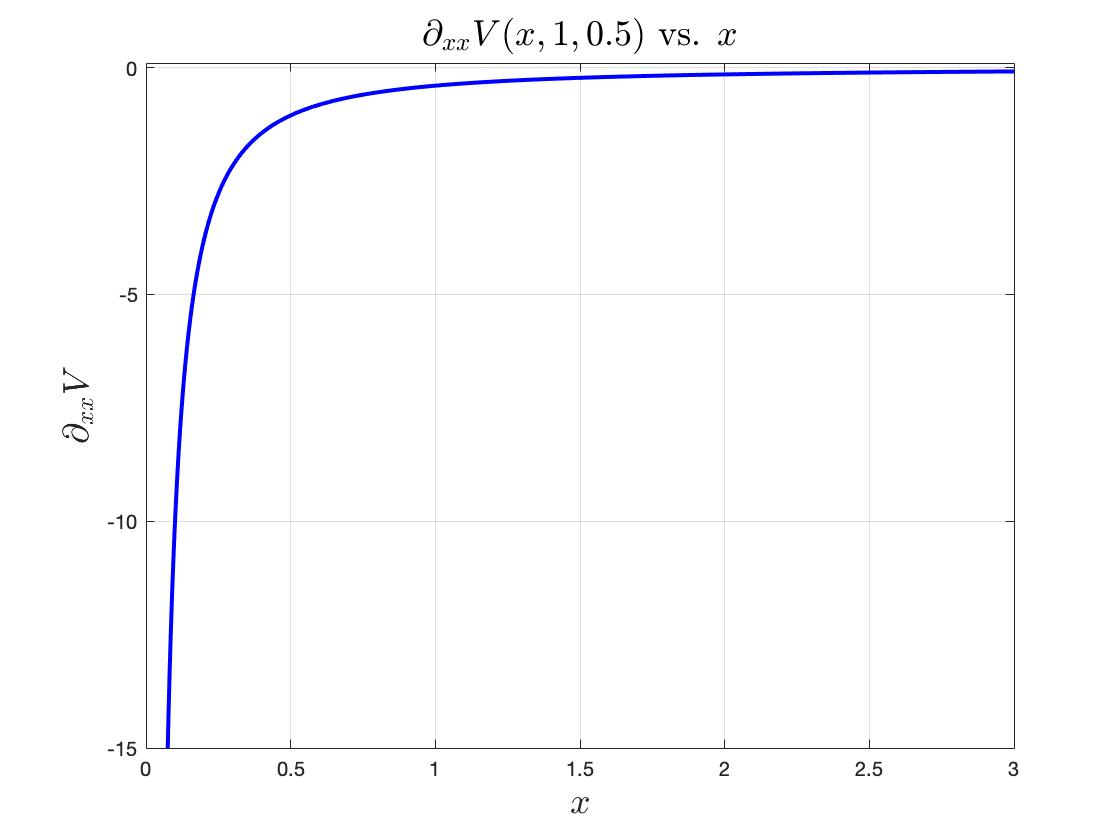}\\
\includegraphics[width=5cm]{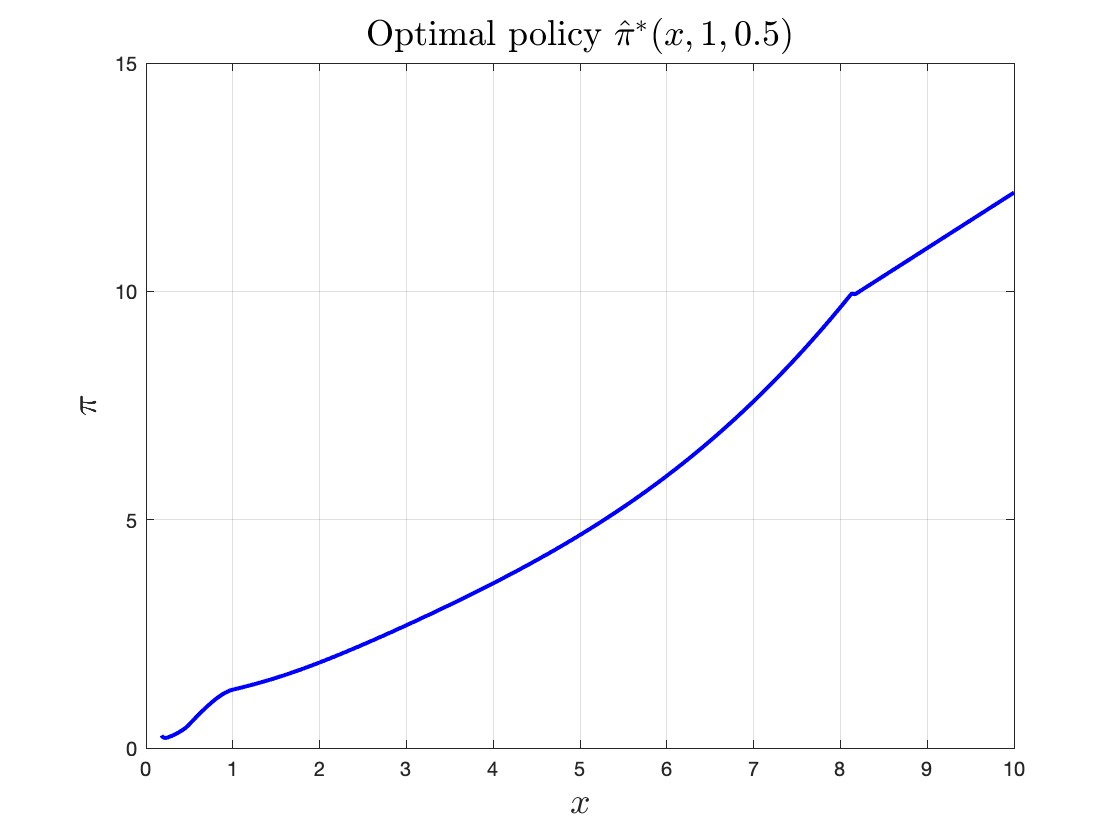}\quad
\includegraphics[width=5cm]{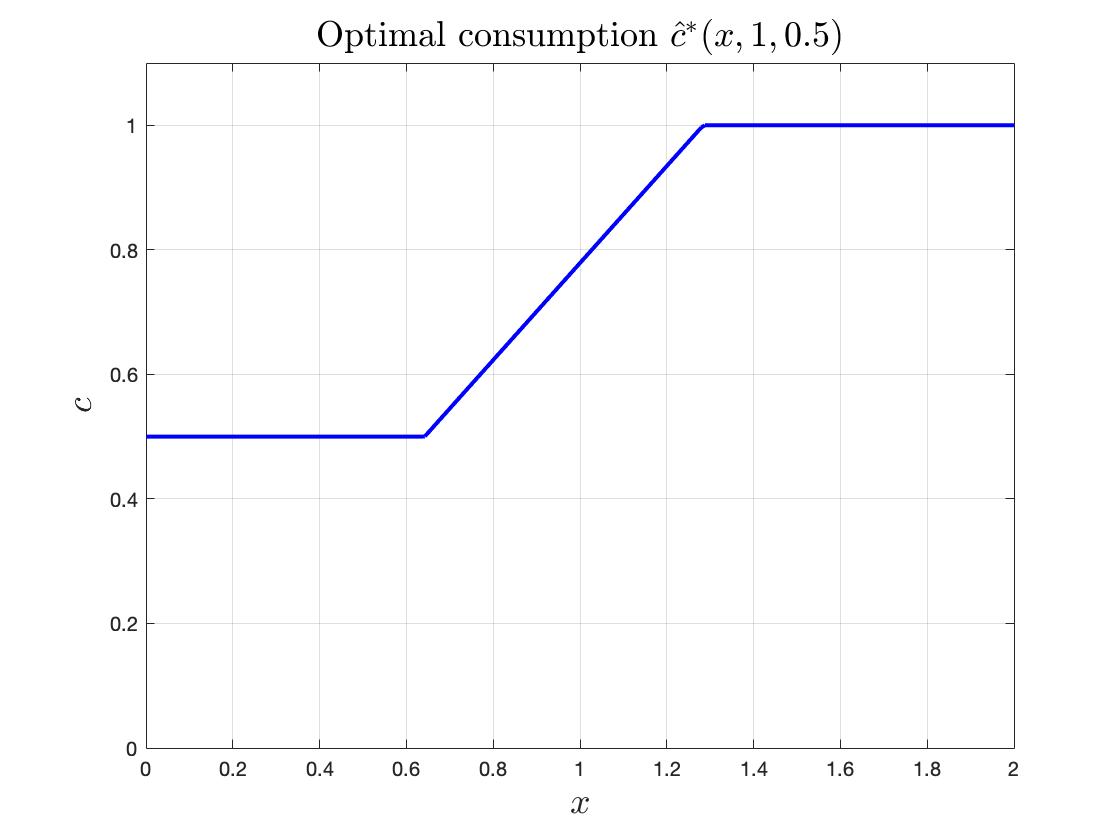}
\caption{The numerical illustration of the value function and its derivative with respect to $x$ (top panel)
The numerical illustration of the optimal portfolio and the optimal consumption with respect to $x$ (bottom panel)}\label{fig-V}
\end{figure}

As shown in Figure \ref{fig-V}, the value function is strictly increasing and concave in the wealth variable $x$. More importantly, both optimal feedback functions of portfolio and consumption rate are increasing in $x$. However, comparing with the Merton's solution, the optimal portfolio is no longer a constant proportion strategy of the wealth level, in fact, it is even not simply concave or convex in the wealth variable depending on the ratio of the wealth and the past consumption peak. The right-bottom panel also illustrates the piecewise consumption behavior in Theorem \ref{thm:V}: when $x/z\leq \omega_{\alpha}(t)$, the optimal consumption $\hat{c}^*(\frac{x}{z}, t)=\alpha$; when $\omega_{\alpha}(t)<x/z<\omega_1(t)$, the optimal consumption is the first order condition $\hat{c}^*(\frac{x}{z}, t)=(\partial_{\omega}U)^{-\frac{1}{p}}(\frac{x}{z}, t)$; and when $\omega_1(t)\leq x/z\leq \omega^*(t)$, the optimal consumption is equal to the historical consumption peak.

\subsection{The dual variational inequality}
We plan to employ the dual transform to linearize the HJB variational inequality  \eqref{eq:HJBU} and study the existence of its classical solution. To this end, we first show below that the dual transform is well defined.  

%

\begin{lemma}\label{lem:conc}
The solution
$U(\omega,t)$ of the problem \eqref{eq:HJBU} satisfies
\begin{eqnarray}
&&\partial_{\omega\omega}U(\omega,t)<0,\quad (\omega,t)\in\mathcal Q,\label{eq:Uconc}\\
&&\partial_{\omega}U(\omega,t)>0,\quad (\omega,t)\in\mathcal Q.\label{eq:Umono}
\end{eqnarray}
\end{lemma}

\begin{proof}
From the transformation \eqref{eq:VUdef} between $V(x,z,t)$ and $U(\omega,t)$, let us first show the concavity of $U(\omega,t)$. Suppose that there exists a point $(\omega_0,t_0)\in \mathcal Q$ such that $\partial_{\omega\omega}U(\omega_0,t_0)>0$, then
$$\sup\limits_{}\left\{\frac12\sigma^2\tilde\pi^2\partial_{\omega\omega}U(\omega_0,t_0)+\mu\tilde\pi\partial_{\omega}U(\omega_0,t_0)\right\}=+\infty,$$
which is a contradiction to \eqref{eq:HJBU}, hence $\partial_{\omega\omega}U(\omega,t)\leq 0$ in $\mathcal Q$.

Next, if there exists a point $(\omega_0,t_0)\in \mathcal Q$ such that $\partial_{\omega\omega}U(\omega_0,t_0)=0$ and $\partial_{\omega}U(\omega_0,t_0)\neq0$, then
$$\sup\limits_{\tilde\pi}\left\{\frac12\sigma^2\tilde\pi^2\partial_{\omega\omega}U(\omega_0,t_0)+\mu\tilde\pi\partial_{\omega}U(\omega_0,t_0)\right\}=\sup\limits_{\tilde\pi}\{\mu\tilde\pi\partial_{\omega}U(\omega_0,t_0)\}=+\infty,$$
which also leads to a contradiction. Therefore, if there exists a point $(\bar\omega,\bar t)$ such that $\partial_{\omega\omega}U(\bar\omega,\bar t)=0$, we must have $\partial_{\omega}U(\bar\omega,\bar t)=0$, thus \eqref{eq:HJBU} becomes
\begin{equation}\label{eq:1}
\max\left\{\partial_tU(\bar\omega,\bar t)+\frac{1}{1-p}-\delta U(\bar\omega,\bar t),(1-p)U(\bar\omega,\bar t)\right\}=0,
\end{equation}
which yields that $(1-p)U(\bar\omega,\bar t)\leq 0$. Moreover, by the definition \eqref{eq:Valuefunction} of $V(x,z,t)$ and the transformation \eqref{eq:VUdef} between $V(x,z,t)$ and $U(\omega,t)$, we know $U\geq0$ in $\mathcal Q$, thus $U$ attains the minimum at the point $(\bar\omega,\bar t)$. It then follows that $\partial_tU(\bar\omega,\bar t)\geq0$, and
$$
\partial_tU(\bar\omega,\bar t)+\frac{1}{1-p}-\delta U(\bar\omega,\bar t)\geq\frac{1}{1-p},
$$
which contradicts \eqref{eq:1}. In summary, we obtain \eqref{eq:Uconc}.

We next show the strict monotonicity of $U(\omega,t)$. By the definition \eqref{eq:Valuefunction} of $V(x,z,t)$ and the transformation \eqref{eq:VUdef}, we know that $U\geq0$ in $\mathcal Q$. Combining with the inequality $(1-p)U-\omega\partial_{\omega}U\leq0$ in \eqref{eq:HJBU}, we have
$$
\partial_{\omega}U\geq (1-p)U\geq0,\quad (\omega,t)\in\mathcal Q.
$$
Moreover, by the concavity of $U(\omega,t)$, we know $\partial_{\omega}U$ is strictly decreasing in $\omega$. Hence, if there exists a point $(\omega_0,t_0)\in \mathcal Q$ such that $\partial_{\omega}U(\omega_0,t_0)=0$, it holds that
$$
\partial_{\omega}U(\omega,t_0)<0,\quad \omega>\omega_0,
$$ 
which yields a contradiction. Therefore, we have that \eqref{eq:Umono} holds.
\end{proof}

As $U$ is increasing and concave in $\omega$, we can choose the candidate optimal feedback control ${\hat \pi}^*(\omega, t)$ by the first order condition that
$$\hat\pi^*(\omega, t):=-\frac{\mu}{\sigma^2}\frac{\partial_{\omega}U}{\partial_{\omega\omega}U}\geq0.$$
Considering the constraint $\alpha\leq \hat{c}\leq 1$, we can choose the candidate optimal feedback control 
\begin{equation}\label{eq:defc}
\hat c^*(\omega, t):=\max\Big\{\alpha,\min\big\{1, (\partial_{\omega}U)^{-\frac{1}{p}}\big\}\Big\}.
\end{equation}
Then \eqref{eq:HJBU} becomes
\begin{equation}\label{eq:VIU}
\left\{\begin{array}{l}
\max\left\{ \partial_{t}U-\frac 1 2\frac{\mu^2}{\sigma^2}\frac{( \partial_{\omega}U)^2}{ \partial_{\omega\omega} U}+\frac{(\hat{c}^*(\omega,t))^{1-p}}{1-p}-{\hat c}^*(\omega,t) \partial_\omega U-\delta U,(1-p)U-\omega  \partial_\omega U\right\} = 0, \\
\hfill (\omega,t)\in\mathcal Q, \\
U(0,t) = 0, \quad\quad t\in[0,T), \\
U(\omega,T) = \frac{\omega^{1-p}}{1-p}, \quad \omega\geq0.
\end{array}\right.
\end{equation}
%

For the conjectured free boundary $\omega^*(t)$ in Remark \ref{rmk:omega} under the optimal control $(\pi^*, c^*)$, using the relationship between $V(x,z,t)$ and $U(\omega, t)$, $\mathcal Q$ can also be divided into two regions, namely the continuation region and the jump region denoted by $CR$ and $JR$ (see the illustration in Fig. 1) that
\begin{eqnarray}
&&CR:=\big\{(\omega,t) | (1-p)U-\omega  \partial_\omega U<0\big\}=\big\{(\omega,t) | \omega< \omega^*(t)\big\}, \\
&&JR:=\big\{(\omega,t) | (1-p)U-\omega  \partial_\omega U=0\big\}=\big\{(\omega,t) | \omega\geq \omega^*(t)\big\}.
\end{eqnarray}
Later, we will rigorously characterize $\omega^*(t)$ and two regions $CR$ and $JR$ in analytical form in Theorem  \ref{thm:Ufb}.


To tackle the nonlinear parabolic variational inequality \eqref{eq:VIU}, we employ the convex dual transform that
$$u(y,t)=\max\limits_{\omega>0}[U(\omega,t)-\omega y].$$
As $\partial_{\omega\omega}(U(\omega,t)-\omega y) = \partial_{\omega\omega}U < 0$, $U(\omega,t)-\omega y$ is concave in $\omega$. It implies that the critical value $\omega_y$ satisfies
\begin{equation}\label{eq:yomega}
\partial_\omega U(\omega_y,t)=y>0.
\end{equation}
By $\partial_{\omega\omega}U < 0$, there exists $I(y,t)$, the inverse of $\partial_\omega U$ in $\omega$ such that
\begin{equation}\label{eq:omegay}
\omega_y=(\partial_\omega U)^{-1}(y,t)=I(y,t)>0.
\end{equation}
Then
\begin{equation}\label{eq:defu}
u(y,t)=U(I(y,t),t)-yI(y,t),
\end{equation}
which leads to
\begin{eqnarray}
&&\partial_y u(y,t) = \partial_\omega U(I(y,t),t)\partial_yI(y,t)-I(y,t)-y\partial_yI(y,t)=-I(y,t)<0,\label{eq:uy}\\
&&\partial_{yy}u(y,t)=-\partial_yI(y,t)=-\frac{1}{\partial_{\omega\omega}U(I(y,t),t)}>0.\label{eq:uyy}
\end{eqnarray}
Hence, $I(y,t)$ strictly decreases in $y$, $u(y,t)$ strictly decreases and is convex in $y$. Moreover, it follows from \eqref{eq:defu} that we have
\begin{eqnarray}\label{eq:ptU}
\partial_t u(y,t) = \partial_\omega U(I(y,t),t)\partial_t I(y,t) + \partial_t U(I(y,t),t)-y\partial_tI(y,t) = \partial_t U(I(y,t),t).
\end{eqnarray}
In addition, define $y_0(t) := \partial_\omega U(0,t)$, which implies that
\begin{equation}\label{eq:y0t}
\partial_yu(y_0(t),t)=-I(y_0(t),t)=0.
\end{equation}
Combining \eqref{eq:defu} with the boundary condition \eqref{eq:y0t}, we obtain
$$u(y_0(t),t)=U(I(y_0(t),t),t)-y_0(t)I(y_0(t),t)=U(0,t)=0.$$
Therefore, we have
$$U(\omega,t) = \min\limits_{y\in(0,y_0(t)]}[u(y,t)+y\omega],$$
Let $y_\omega$ be the critical value satisfying $\partial_y u(y_\omega,t)+\omega=0$, it holds that
\begin{equation}\label{eq:Uu}
U(\omega,t)=u(y_\omega,t)+\omega y_\omega.
\end{equation}
The linear dual variational inequality of \eqref{eq:VIU} can be written as
\begin{equation}\label{eq:VIu}
\left\{\begin{array}{l}
\max\left\{\partial_t u + \frac{1}{2}\frac{\mu^2}{\sigma^2}y^2\partial_{yy}u + \delta y\partial_y u - \delta u - f(y), (1-p)u + py \partial_y u\right\} = 0, \\
\hfill (y,t)\in (0,y_0(t))\times(0,T], \\
u(y_0(t),t)=\partial_yu(y_0(t),t)=0, \\
u(y,T)=\frac{p}{1-p}y^{1-\frac{1}{p}},\quad y\geq0,
\end{array}\right.
\end{equation}
where
\begin{eqnarray*}
f(y)=\left\{\begin{array}{ll}
\alpha y-\frac{\alpha^{1-p}}{1-p},  & \mbox{if } y\geq \alpha^{-p}, \\
-\frac{p}{1-p}y^{1-\frac{1}{p}},       & \mbox{if } 1< y<\alpha^{-p}, \\
y-\frac{1}{1-p},                              & \mbox{if } y\leq1,
\end{array}\right.
\end{eqnarray*}
which is continuously differentiable in $y$. Moreover, we define $y^*(t) = \partial_{\omega}U(\omega^*(t),t)$.

\section{The Solution to the Dual Variational Inequality \eqref{eq:VIu}}\label{sec:dual}
\setcounter{equation}{0}

\subsection{Auxiliary dual variational inequality}
Taking advantage of the boundary condition of $u(y,t)$ on $y_0(t)$, we can expand the solution $u(y,t)$ to the problem \eqref{eq:VIu} from $(0,y_0(t))\times (0,T]$ to the enlarged domain $\mathcal Q=\mathbb R^+\times [0,T)$. Let us consider $\hat u(y,t)$ that satisfies the auxiliary variational inequality on $\mathcal Q$ that
\begin{equation}\label{eq:VIhatu}
\left\{\begin{array}{l}
\max\left\{\partial_t\hat u+\frac 1 2\frac{\mu^2}{\sigma^2}y^2\partial_{yy}\hat u + \delta y\partial_y\hat u-\delta\hat  u-f(y),\ (1-p)\hat u+py\partial_y\hat u,\ -\hat  u\right\}=0,\quad (y,t)\in\mathcal Q,\\
\hat u(y,T)=\frac{p}{1-p}y^{1-\frac{1}{p}},\quad y\geq0.
\end{array}\right.
\end{equation}
It follows that
$$u(y,t)=\hat u(y,t),\quad (y,t)\in(0,y_0(t))\times (0,T].$$

For the variational inequality above, we consider the following regimes and associated free boundaries: 
\begin{eqnarray*}
&&\mathcal F:= \big\{(y,t)\in \mathcal Q ~ | ~ \hat{u}(y,t)=0\big\}{\mbox{ (Function\ constraint region)}}, \\
&&\mathcal G:= \big\{(y,t)\in \mathcal Q ~ | ~ (1-p)\hat{u}(y,t)+py\partial_y\hat{u}(y,t)=0\big\}{\mbox{ (Gradient\ constraint region)}}, \\
&&\mathcal E:= \big\{(y,t)\in \mathcal Q~ | ~  (1-p)\hat{u}(y,t)+py \partial_y\hat{u}(y,t)<0, \hat{u}(y,t)>0\big\}{\mbox{ (Equation region)}}.
\end{eqnarray*}

We plot in Figure \ref{fig-1} the numerical illustration of the above free boundaries as below: 
\begin{figure}[H]
    \centering
\includegraphics[width=7cm]{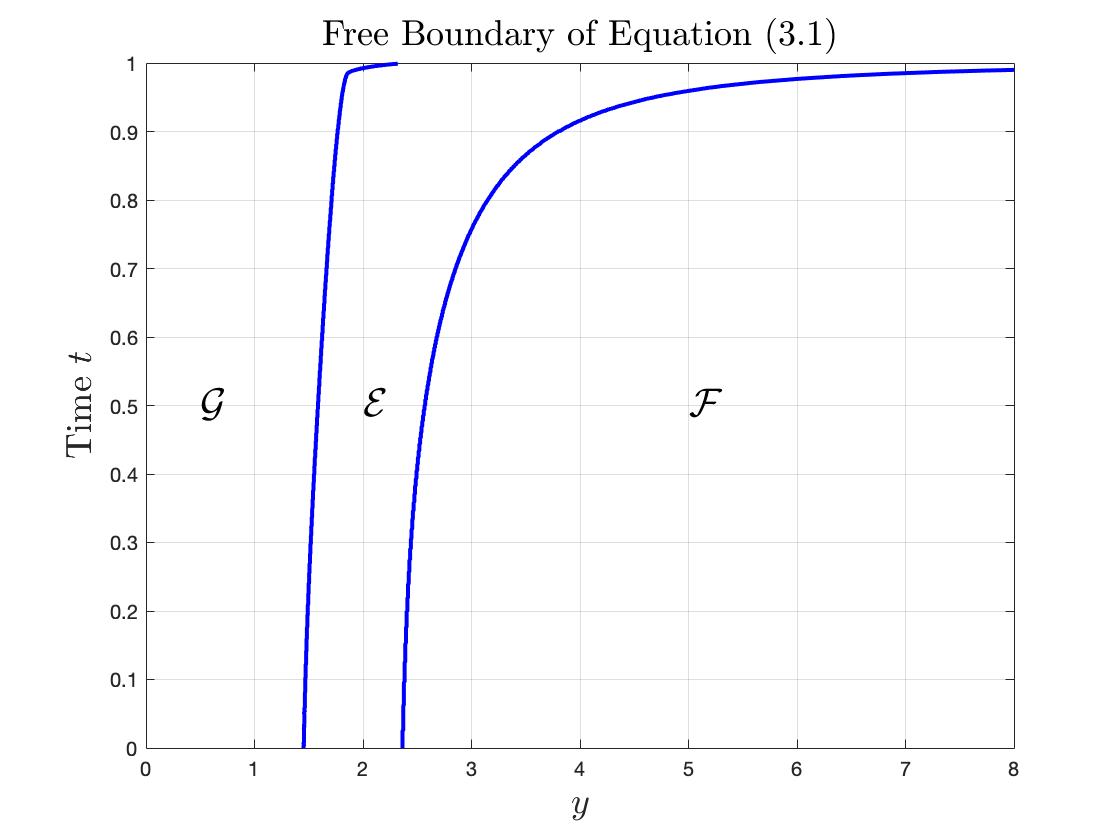}
\caption{The numerical illustration of free boundaries in the variational inequality $(3.1)$ }\label{fig-1}
\end{figure}

To simplify some analysis, let us further set $y=e^s$,$\tau=T-t$ and $\tilde u(s,\tau)=\hat u(y,t)$. It follows that $\tilde u(s,\tau)$ satisfies
\begin{equation}\label{eq:VItildeu}
\left\{\begin{array}{l}
\min\left\{\partial_\tau\tilde u-\frac 1 2\frac{\mu^2}{\sigma^2}\partial_{ss}\tilde u-(\delta-\frac 1 2 \frac{\mu^2}{\sigma^2})\partial_s\tilde u+\delta\tilde u+\tilde f(s),\ (p-1)\tilde u-p\partial_s\tilde u,\ \tilde u\right\}=0, \quad (s,\tau)\in\Omega,\\
\tilde u(s,0)=\frac{p}{1-p}e^{\frac{p-1}{p}s},\quad s\in\mathbb R,
\end{array}\right.
\end{equation}
where $\Omega=(-\infty,+\infty)\times (0,T]$ and
\begin{eqnarray*}
\tilde f(s)=\left\{\begin{array}{ll}
\alpha e^s-\frac{\alpha^{1-p}}{1-p}, & \mbox{if } s\geq -p\ln\alpha,\\
-\frac{p}{1-p}e^{\frac{p-1}{p}s}, & \mbox{if } 0<s< -p\ln\alpha,\\
e^s-\frac{1}{1-p}, & \mbox{if } s\leq0.
\end{array}\right.
\end{eqnarray*}

The transformed regimes and the associated free boundaries of the variational inequality \eqref{eq:VItildeu} are plotted in Figure \ref{fig-2} as below.

\begin{figure}[H]
    \centering
\includegraphics[width=7cm]{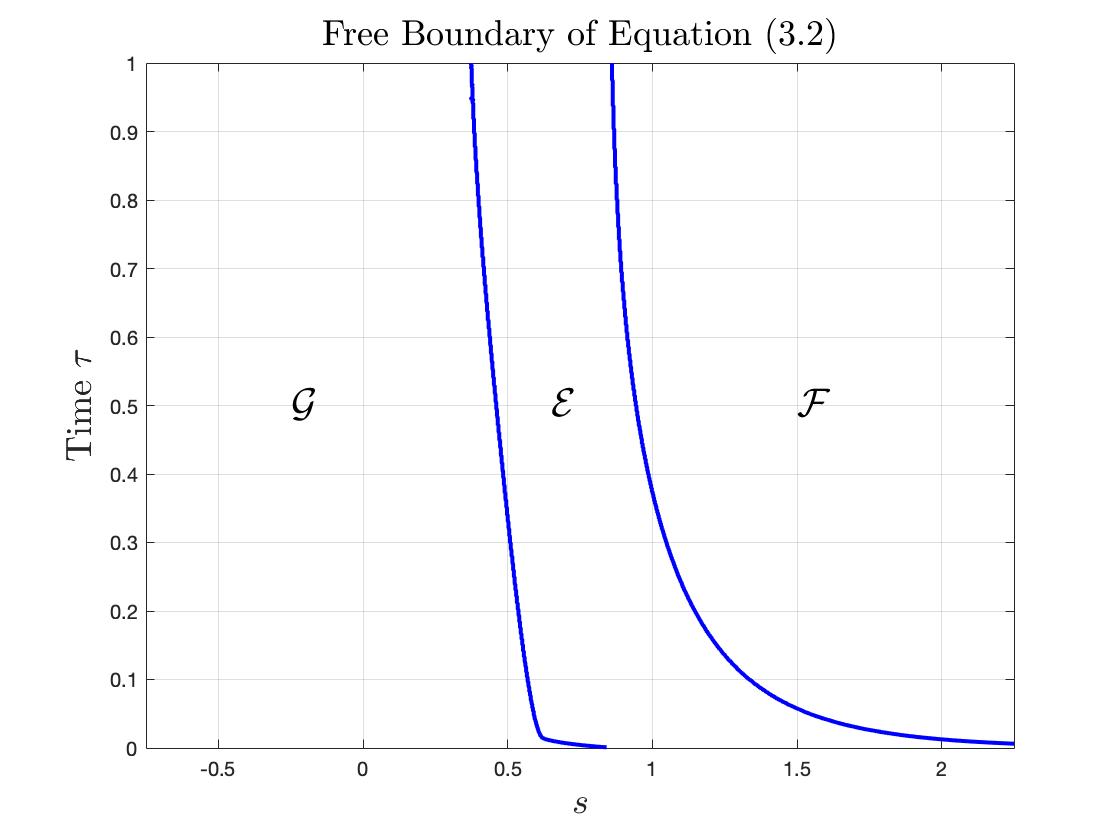}
\caption{The numerical illustration of free boundaries in the variational inequality $(3.2)$ }\label{fig-2}

\end{figure}

Considering
\begin{equation}\label{eq:vtildeu}
v(s,\tau):=e^{\frac{1-p}{p}s}\tilde u(s,\tau),
\end{equation}
%
%
we can work with another auxiliary dual variational inequality of $v(s,\tau)$ that
\begin{eqnarray}\label{eq:VIv}
\left\{\begin{array}{ll}
\min\left\{\partial_\tau v-\mathcal L_sv+e^{\frac{1-p}{p}s}\tilde f(s),-\partial_sv(s,\tau),v(s,\tau)\right\}= 0,\quad (s,\tau)\in\Omega,\\
v(s,0)=\frac{p}{1-p},\quad s\in\mathbb R,
\end{array}\right.
\end{eqnarray}
where 
$$
\mathcal L_sv=\frac 1 2\frac{\mu^2}{\sigma^2}\partial_{ss}v-\left(\frac{2-p}{2p}\frac{\mu^2}{\sigma^2}-\delta\right)\partial_{s}v-\left(\frac{1}{p}\delta-\frac{\mu^2(1-p)}{2p^2\sigma^2}\right) v.
$$

In what follows, we will first study the existence and uniqueness of the solution to the auxiliary problem \eqref{eq:VIv} and investigate its associated free boundary curve. Then, based on the transform \eqref{eq:vtildeu}, we can examine the auxiliary variational inequalities \eqref{eq:VItildeu} and \eqref{eq:VIhatu}. Finally, we can conclude the existence and uniqueness of the solution to the dual variational inequality \eqref{eq:VIu}.

\subsection{Characterization of the free boundary in \eqref{eq:VIv}}
Let us first analyze the auxiliary variational inequality \eqref{eq:VIv}, which is a parabolic variational inequality with both gradient constraint and function constraint.
We can obtain the existence and uniqueness of the solution in $W^{2,1}_{q,loc}(\Omega)\cap C(\overline{\Omega})$ to problem \eqref{eq:VIv} in the next result, where the Sobolev space $W^{2,1}_{q,loc}$ is defined by
 $$
 W^{2,1}_{q,loc}(\Omega) = \big\{v(s,\tau) |v, \partial_sv, \partial_{ss}v, \partial_\tau v\in \mathcal{L}^q(\Omega'),\;\Omega'\subset\subset\Omega\big\},\quad 1<q<\infty.
 $$

In order to obtain the existence and properties of the solution to problem \eqref{eq:VIv}, we first consider the following problem, for any $N>0$, $v_N$ satisfies
\begin{eqnarray}\label{eq:VIvN}
\left\{\begin{array}{ll}
\min\left\{\partial_\tau v_N-\mathcal L_sv_N+e^{\frac{1-p}{p}s}\tilde f(s),-\partial_sv_N(s,\tau),v_N(s,\tau)\right\}= 0,\quad (s,\tau)\in\Omega_N,\\
v_N(s,0)=\frac{p}{1-p},\quad s>-N,\\
v_N(-N,\tau)=\frac{p}{1-p},\quad \tau\in[0,T],
\end{array}\right.
\end{eqnarray}
where  $\Omega_N=(-N,+\infty)\times(0,T]$.

\begin{lemma}\label{lem:vn}
There exists a  solution $v_N(s,\tau)\in W^{2,1}_{q,loc}(\Omega_N)\cap C(\overline{\Omega_N})$ to problem \eqref{eq:VIvN}. Moreover, we have
\begin{eqnarray}
&&0\leq v_N(s,\tau)\leq \frac{p}{1-p},\quad (s,\tau)\in\Omega_N,\label{eq:0vn}\\
&&\partial_\tau v_N(s,\tau)\leq0,\quad (s,\tau)\in\Omega_N,\label{eq:ptauvn}\\
&&\lim\limits_{s\rightarrow+\infty}v_N(s,\tau)=0.\label{eq:vninfty},
\end{eqnarray}
and the solution satisfying \eqref{eq:0vn} is unique.
\end{lemma}
\begin{proof}
We can solve the problem \eqref{eq:VIvN} using the standard penalty approximation method. Consider $v_{N}^\varepsilon$ satisfies 
\begin{eqnarray}\label{eq:VIvNep}
\left\{\begin{array}{ll}
\partial_\tau v_{N}^\varepsilon-\mathcal L_sv_{N}^\varepsilon+e^{\frac{1-p}{p}s}\tilde f(s)+\beta_\varepsilon(v_{N}^\varepsilon)+s\beta_\varepsilon(-\partial_sv_{N}^\varepsilon)= 0,\quad (s,\tau)\in\Omega_N,\\
v_{N}^\varepsilon(s,0)=\frac{p}{1-p},\quad s>-N,\\
v_{N}^\varepsilon(-N,\tau)=\frac{p}{1-p},\quad \tau\in[0,T],
\end{array}\right.
\end{eqnarray}
where $\beta_\varepsilon(\lambda)$ is the penalty function satisfying
\begin{eqnarray*}
&&\beta _{\varepsilon }(\cdot)\in C^{2}(-\infty,+\infty ), \quad \beta _{\varepsilon 
}(\cdot)\leq 0, \quad \beta _{\varepsilon}^{\prime }(\cdot)\geq 0,\\
&&\lim\limits_{\varepsilon  \rightarrow 0} \beta _{\varepsilon }(\lambda)=\left\{
\begin{array}{ll}
0, & \lambda>0,\\
-\infty, & \lambda<0.
\end{array}
\right.
\end{eqnarray*}
Using the standard fixed point theorem, we are able to show that there exists a solution $v_{N}^\varepsilon(s,\tau)\in W^{2,1}_{q,loc}(\Omega_N)\cap C(\overline{\Omega_N})$ to the penalty problem \eqref{eq:VIvNep}. Letting $\varepsilon\rightarrow0$, we obtain a solution $v_{N}(s,\tau)\in W^{2,1}_{q,loc}(\Omega_N)\cap C(\overline{\Omega_N})$ to problem \eqref{eq:VIvN}.

The estimate \eqref{eq:0vn} follows from the facts $\partial_sv_N\leq0, v_N\geq0$ and the boundary condition $v_N(-N,\tau)=\frac{p}{1-p}$. Moreover, based on the boundary condition $v_N(-N,\tau)=\frac{p}{1-p}$, we have the uniqueness of the solution to problem \eqref{eq:VIvN} and then the comparison principle for problem \eqref{eq:VIvN} holds true.

Now we will show \eqref{eq:ptauvn}. For any $0<\delta<T$, set $v_\delta(s,\tau)=v_N(s,\tau+\delta)$, then
 $v_\delta(s,\tau)$ satisfies
\begin{eqnarray}\label{eq:VIvdel}
\left\{\begin{array}{ll}
\min\left\{\partial_\tau v_\delta-\mathcal L_sv_\delta+e^{\frac{1-p}{p}s}\tilde f(s),-\partial_sv_\delta,v_\delta\right\}= 0,\quad (s,\tau)\in(-N,+\infty)\times(0,T-\delta],\\
v_\delta(s,0)=v_N(s,\delta)\leq\frac{p}{1-p}=v_N(s,0),\quad s>-N, \\
v_\delta(-N,\tau)=\frac{p}{1-p}=v_N(-N,\tau),\quad \tau\in[0,T-\delta].
\end{array}\right.
\end{eqnarray}
Using the comparison principle between \eqref{eq:VIvN} and \eqref{eq:VIvdel}, we have that
$$v_N(s,\tau+\delta)=v_\delta(s,\tau)\leq v_N(s,\tau),$$
which leads to the desired result \eqref{eq:ptauvn}.

For each $\tau>0$, define $$Z_N(\tau)=\min\{s|v_N(s,\tau)=0\}.$$
We next show that $Z_N(\tau)$ is finite for $\tau\in(0,T]$. Suppose that there exists a $\tau_0>0$ such that $Z_N(\tau_0)=+\infty$. 
Together with \eqref{eq:ptauvn}, it holds that 
$$
v_N(s,\tau)>0,\quad (s,\tau)\in(-N,+\infty)\times(0,\tau_0), 
$$
which implies
\begin{eqnarray}\label{eq:VIvN0}
\left\{\begin{array}{ll}
\min\left\{\partial_\tau v_N-\mathcal L_sv_N+e^{\frac{1-p}{p}s}\tilde f(s),-\partial_sv_N(s,\tau)\right\}= 0,\quad (s,\tau)\in(-N,+\infty)\times(0,\tau_1),\\
v_N(s,0)=\frac{p}{1-p},\quad s>-N,\\
v_N(-N,\tau)=\frac{p}{1-p},\quad \tau\in[0,\tau_1],
\end{array}\right.
\end{eqnarray}
where $0<\tau_1\leq \tau_0$.

Note that
\begin{eqnarray*}
e^{\frac{1-p}{p}s}\tilde f(s)&=&\alpha e^{\frac1p s}-\frac{\alpha^{1-p}}{1-p}e^{\frac{1-p}{p}s}\\
&=&e^{\frac1p s}\left(\alpha -\frac{\alpha^{1-p}}{1-p}e^{-s}\right)\rightarrow +\infty,\quad {\rm as}\;s\rightarrow+\infty, 
\end{eqnarray*}
it follows that there exists a constant $s_0$ such that
$$e^{\frac{1-p}{p}s}\tilde f(s)\geq \frac{3\alpha}{4}e^{\frac1p s},\quad s\geq s_0.$$
Let $\tilde v(s,\tau)=\frac{p}{1-p}-\frac\alpha2(e^{\frac1p s}-e^{\frac1p s_0})\tau$. We next show that $\tilde v(s,\tau)$ is a super-solution to \eqref{eq:VIvN0} on $(s_0,+\infty)\times(0,\tau_1)$.

When $\tau_1$ is small enough, we can deduce
\begin{eqnarray*}
&&\partial_\tau \tilde v-\mathcal L_s\tilde v+e^{\frac{1-p}{p}s}\tilde f(s)\\
&=&-\frac\alpha2\left(e^{\frac1p s}-e^{\frac1p s_0}\right)+\left(\frac\delta p-\frac{\mu^2(1-p)}{2p\sigma^2}\right)\left(\frac{p}{1-p}+\frac\alpha 2e^{\frac1p s_0}\tau\right)+e^{\frac{1-p}{p}s}\tilde f(s)\geq0.
\end{eqnarray*}
Together with $v_N(s,\tau)\leq\frac{p}{1-p}$, we know that $\tilde v(s,\tau)$ satisfies
\begin{eqnarray*}
\left\{\begin{array}{ll}
\min\left\{\partial_\tau \tilde v-\mathcal L_s\tilde v+e^{\frac{1-p}{p}s}\tilde f(s),-\partial_s\tilde v(s,\tau)\right\}= 0,\quad (s,\tau)\in(s_0,+\infty)\times(0,\tau_1),\\
\tilde v(s,0)=\frac{p}{1-p},\quad s>s_0,\\
\tilde v(-N,\tau)=\frac{p}{1-p}\geq v_N(s_0,\tau),\quad \tau\in[0,\tau_1].
\end{array}\right.
\end{eqnarray*}
The comparison principle implies that
$$v_N(s,\tau)\leq  \tilde v(s,\tau),\quad (s,\tau)\in(s_0,+\infty)\times(0,\tau_1].$$
Moreover, we have
$$\tilde v(s,\tau)<0,\quad s>p\ln \left(e^{\frac1ps_0}+\frac2\alpha\frac{p}{(1-p)\tau_1}\right),$$
which leads to
$$ v_N(s,\tau)<0,\quad s>p\ln \left(e^{\frac1ps_0}+\frac2\alpha\frac{p}{(1-p)\tau_1}\right),$$
which is a contradiction. As a result, $Z_n(\tau)$ is finite for each $\tau>0$, and it implies that \eqref{eq:vninfty} holds.
\end{proof}

\begin{proposition}\label{thm:VIv}
There exists a  solution $v(s,\tau) \in W^{2,1}_{q,loc}(\Omega)\cap C(\overline{\Omega})$ to problem \eqref{eq:VIv}. Moreover, we have 
\begin{eqnarray}
&&0\leq v(s,\tau)\leq \frac{p}{1-p},\quad (s,\tau)\in\Omega,\label{eq:0v}\\
&&\partial_\tau v(s,\tau)\leq0,\quad (s,\tau)\in\Omega,\label{ie:ptauv}\\
&&\lim\limits_{s\rightarrow+\infty}v(s,\tau)=0.\label{eq:infinity}
\end{eqnarray}
The solution to problem \eqref{eq:VIv} satisfying \eqref{eq:0v} and \eqref{eq:infinity}  is unique.
\end{proposition}

\begin{proof}
Denote $v(s,\tau)=\lim\limits_{N\rightarrow+\infty}v_N(s,\tau)$, then $v(s,\tau) \in W^{2,1}_{q,loc}(\Omega)\cap C(\overline{\Omega})$ is the solution of problem \eqref{eq:VIv}. It is easy to see that \eqref{eq:0v} and \eqref{ie:ptauv} can be derived from \eqref{eq:0vn} and \eqref{eq:ptauvn}.

We next show \eqref{eq:infinity}. Suppose $N_1>N_2$, then
$$v_{N_1}(-N_2,\tau)\leq \frac{p}{1-p}=v_{N_2}(-N_2,\tau).$$
By the comparison principle, we have
$$v_{N_1}(s,\tau)\leq v_{N_2}(s,\tau),\quad (s,\tau)\in(-N_2,+\infty)\times(0,T].$$
It hence holds that
$$v_{N_1}(s,\tau)=0,\quad (s,\tau)\in(Z_{N_2},+\infty)\times(0,T],$$
which implies that $Z_N(\tau)$ is decreasing in $N$. This, together with \eqref{eq:vninfty}, implies \eqref{eq:infinity}. 

We are ready to show the uniqueness of the solution to problem \eqref{eq:VIv} by contradiction. Suppose $v_1, v_2$ are two solutions to the problem \eqref{eq:VIv}. Denote $\mathcal N=\{v_1>v_2\}\neq\emptyset$, and let $\tau_0=\inf\{\tau:(s,\tau)\in\mathcal N\},\;A=\partial_p\mathcal N\cap \{(s,\tau):\tau=\tau_0\}$. It then holds that  $$v_1=v_2,\quad{\rm on}\;A.$$
Denote $\mathcal N_1=\mathcal N\cap\{(s,\tau): \partial_sv_1<\partial_sv_2\}$ and $\mathcal N_2=\mathcal N\cap\{(s,\tau): \partial_sv_1\geq\partial_sv_2\}$. Suppose $\mathcal N_1\neq\emptyset$, by condition \eqref{eq:infinity}, it is easy to show that there exists a point $(s_0,\tau_0)\in\partial_p \mathcal N_1\cap A$, which implies $v_1(s_0,\tau_0)=v_2(s_0,\tau_0)$, and we have 
\begin{eqnarray*}
\left\{\begin{array}{ll}
\partial_\tau v_1-\mathcal L_sv_1+e^{\frac{1-p}{p}s}\tilde f(s)=0,\quad (s,\tau)\in\mathcal N_1,\\
\partial_\tau v_2-\mathcal L_sv_2+e^{\frac{1-p}{p}s}\tilde f(s)\geq0,\quad (s,\tau)\in\mathcal N_1,\\
v_1=v_2\ {\rm or}\; \partial_sv_1=\partial_sv_2,\quad (s,\tau)\in\partial_p\mathcal N_1.\\
\end{array}\right.
\end{eqnarray*}
By condition \eqref{eq:0v} and the maximum principle, we know
$$v_2(s,\tau)\geq v_1(s,\tau),\quad (s,\tau)\in\mathcal N_1,$$
which contradicts the definition of $\mathcal N$ and hence $\mathcal N\subset\mathcal N_2$. 

We then conclude that
\begin{eqnarray*}
\left\{\begin{array}{ll}
\partial_sv_1\geq\partial_sv_2,\quad (s,\tau)\in\mathcal N,\\
v_1=v_2,\quad (s,\tau)\in\partial_p\mathcal N.\\
\end{array}\right.
\end{eqnarray*}
This, together with the condition \eqref{eq:infinity}, implies that 
 $$v_1(s,\tau)\leq v_2(s,\tau),\quad  (s,\tau)\in\mathcal N,$$
which is a contradiction to the definition of $\mathcal N$. The proof is then complete.
\end{proof}

As a direct result of the uniqueness of solution to problem \eqref{eq:VIv}, the comparison principle for problem \eqref{eq:VIv} holds.

Recall the transformation in \eqref{eq:vtildeu}, we have the following regions (see the numerical illustrations in Figure  \ref{fig-2}) that
\begin{eqnarray*}
&&\mathcal F= \big\{(s,\tau)\in \Omega ~ | ~ v(s,\tau)=0\big\}{\mbox{ (Function\ constraint region)}}, \\
&&\mathcal G= \big\{(s,\tau)\in \Omega ~ | ~ \partial_sv(s,\tau)=0\big\}{\mbox{ (Gradient\ constraint region)}}, \\
&&\mathcal E= \big\{(s,\tau)\in \Omega ~ | ~ \partial_sv(s,\tau)<0, v(s,\tau)>0\big\}{\mbox{ (Equation region)}}.
\end{eqnarray*}
%


Note that $v(s,\tau)$ satisfies $\partial_sv(s,\tau)\leq 0$ and $v(s,\tau)\geq 0$. For each $\tau>0$, let us define
\begin{equation}\label{eq:ztau}
Z(\tau):=\inf\big\{s ~ | ~ v(s,\tau) = 0\big\}.
\end{equation}

\begin{proposition}\label{thm:mf}
The curve $Z(\tau)$ defined in \eqref{eq:ztau} satisfies $-p\ln\alpha<Z(\tau)<+\infty$ for $\tau\in(0,T]$ and
\begin{equation}\label{eq:fbz}
\mathcal F= \big\{(s,\tau)\in \mathcal Q ~ | ~ s\geq Z(\tau)\big\}.
\end{equation}
Moreover, $Z(\tau)$ strictly decreases in $\tau$. In particular,
\begin{equation}\label{lim:z0}
\lim\limits_{\tau\rightarrow 0^+} Z(\tau)=+\infty.
\end{equation}
\end{proposition}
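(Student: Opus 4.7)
I would address the four sub-claims in sequence: the structural identity \eqref{eq:fbz}, the lower bound $Z(\tau)>-p\ln\alpha$, the finiteness $Z(\tau)<+\infty$, and the strict monotonicity together with the blow-up at $\tau\to 0^+$. Proposition~\ref{thm:VIv} is used throughout for the existence, continuity of $v$, the monotonicity $\partial_\tau v\leq 0$, and the uniform bound $v\leq\frac{p}{1-p}$. I expect the finiteness step to be the only real obstacle.

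\textbf{Structure and the lower bound.} The VI \eqref{eq:VIv} enforces $v\geq 0$ and $-\partial_s v\geq 0$, so $v(\cdot,\tau)$ is non-negative and non-increasing; since $v$ is continuous, $\mathcal{F}_\tau:=\{s:v(s,\tau)=0\}$ is closed and upward-closed, whence $\mathcal{F}_\tau=[Z(\tau),+\infty)$ with the minimum attained whenever non-empty, which is \eqref{eq:fbz}. Combined with $\partial_\tau v\leq 0$, it follows that $Z$ is non-increasing and the set $\Xi(\tau_0):=\{(s,\tau)\in\Omega:s>Z(\tau_0),\ \tau>\tau_0\}$ is an open subset of $\mathcal{F}$. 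On $\Xi(\tau_0)$ we have $v\equiv 0$, so all partials vanish classically, and the first entry of the min in \eqref{eq:VIv} reduces to $e^{(1-p)s/p}\tilde f(s)\geq 0$. A piecewise inspection of $\tilde f$ shows $\tilde f(s)<0$ on $(-\infty,-p\ln\alpha-\ln(1-p))$, in particular on a right-neighbourhood of $-p\ln\alpha$; continuity of $\tilde f$ then forces $Z(\tau_0)\geq -p\ln\alpha-\ln(1-p)>-p\ln\alpha$ for every $\tau_0\in(0,T)$. The boundary case $\tau_0=T$ follows from the same inequality combined with the right-continuity of $Z$ (which in turn follows from the continuity of $v$ and the attainment of the minimum in $\mathcal{F}_\tau$).

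\textbf{Finiteness $Z(\tau)<+\infty$.} This is the main technical step. Since $e^{(1-p)s/p}\tilde f(s)=\alpha e^{s/p}-\frac{\alpha^{1-p}}{1-p}e^{(1-p)s/p}\to+\infty$ as $s\to+\infty$, one expects the source term to drive $v$ to zero for large $s$; however no smooth strictly positive super-solution bounded by $\frac{p}{1-p}$ can vanish at a finite $s$, so I would build a piecewise $W^{2,1}_{q,\text{loc}}$ super-solution $\bar v(s,\tau)$ of \eqref{eq:VIv}: equal to $\frac{p}{1-p}$ on $\{s\leq\bar S(\tau)\}$ and $0$ on $\{s\geq\bar S(\tau)+1\}$, joined by a narrow transition layer, with the moving threshold $\bar S(\tau)<+\infty$ chosen so that $\tilde f(s)\geq 0$ for $s\geq\bar S(\tau)$ and $\bar S(\tau)\to+\infty$ as $\tau\to 0^+$. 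The three entries of the min are verified term-by-term: on $\{s\leq\bar S(\tau)\}$ the standing assumption $\delta\geq\frac{\mu^2(1-p)}{2p\sigma^2}+p$ dominates the zeroth-order term (exactly as in the proof of Proposition~\ref{thm:VIv}); on $\{s\geq\bar S(\tau)+1\}$ the sign of $\tilde f$ does the work; and the transition layer is chosen $C^1$ and non-increasing, so that $-\partial_s\bar v\geq 0$ holds globally and any jump in $\partial_{ss}\bar v$ is non-negative in the distributional sense. A comparison principle for \eqref{eq:VIv} then gives $v\leq\bar v$, hence $Z(\tau)\leq\bar S(\tau)+1<+\infty$. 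The hard part here is simply calibrating $\bar S(\tau)$ and the transition layer so that the three inequalities close up.

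\textbf{Strict monotonicity and the limit.} Non-increase of $Z$ has already been noted. To upgrade to strict monotonicity, suppose for contradiction $Z\equiv Z^*$ on some $[\tau_1,\tau_2]\subset(0,T]$. Then $v\equiv 0$ on $[Z^*,+\infty)\times[\tau_1,\tau_2]$ while $v>0$ immediately to the left; by $W^{2,1}_{q,\text{loc}}$ regularity, $\partial_s v$ is absolutely continuous in $s$ and its value at $s=Z^*$ coincides with the right-hand value $0$ for a.e.\ $\tau\in(\tau_1,\tau_2)$. On the other hand, $v>0$ satisfies the non-degenerate parabolic equation $\partial_\tau v-\mathcal{L}_s v+g(s)=0$ in the continuation region $\mathcal{E}$ immediately to the left of $\{Z^*\}\times(\tau_1,\tau_2)$, and the Hopf lemma at the free boundary yields $\partial_s v(Z^{*-},\tau)<0$ strictly, a contradiction. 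Finally, $Z(\tau)\to+\infty$ as $\tau\to 0^+$ is immediate from continuity of $v$ at $\tau=0$ and the strictly positive initial datum $v(\cdot,0)\equiv\frac{p}{1-p}$: for every $M>0$, $v>0$ on $[-M,M]\times[0,\delta_M]$ for some $\delta_M>0$, so $Z(\tau)>M$ on $(0,\delta_M]$, forcing $Z(\tau)\to+\infty$.
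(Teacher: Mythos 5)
Your overall skeleton matches the paper's (same lower bound via the sign of $\tilde f$ on the coincidence set, same monotonicity from $\partial_\tau v\leq 0$, same compactness argument for the limit \eqref{lim:z0}), but the strict monotonicity step contains a genuine error. You apply the Hopf lemma directly to $v$ on the vertical segment $\{Z^*\}\times(\tau_1,\tau_2)$ to conclude $\partial_s v(Z^{*-},\tau)<0$. This cannot work, for two reasons. First, the sign of the source term is wrong: near the free boundary one has $s\geq -p\ln\alpha-\ln(1-p)$, hence $\tilde f(s)\geq 0$, so $v$ satisfies $\partial_\tau v-\mathcal L_s v=-e^{\frac{1-p}{p}s}\tilde f(s)\leq 0$; with the zeroth-order coefficient of $-\mathcal L_s$ nonnegative, this makes $v$ a \emph{subsolution} attaining its \emph{minimum} $0$ at the boundary, which is precisely the configuration in which Hopf gives nothing (compare $v(s)=(Z^*-s)^2$, a subsolution vanishing to second order). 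Second, and more decisively, the conclusion you want is false: smooth fit $\partial_s v(Z(\tau),\tau)=0$ holds for \emph{every} $\tau$ (it is the third line of \eqref{eq:VIv1}), not only under the hypothesis that $Z$ is locally constant, so no contradiction can be extracted from it. The paper's fix is to differentiate the equation in $\tau$: since the source is $\tau$-independent, $\partial_\tau v$ solves the \emph{homogeneous} equation on $(s_0-\varepsilon,s_0)\times(\tau_1,\tau_2)$ with $\partial_\tau v\leq 0$ and $\partial_\tau v=\partial_{s\tau}v=0$ on $\Gamma$; Hopf applied to $\partial_\tau v$ (which attains its maximum $0$ on $\Gamma$) then forces either $\partial_{s\tau}v|_\Gamma>0$ or $\partial_\tau v\equiv 0$, both contradictions. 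You need this extra differentiation; the argument does not close at the level of $v$ itself.

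The finiteness step is also only a plan rather than a proof, and the deferred ``calibration'' is exactly where the difficulty sits: on the moving transition layer you have $\partial_\tau\bar v\leq 0$ (since $\bar S$ decreases) and a convex portion where $-\frac{\mu^2}{2\sigma^2}\partial_{ss}\bar v<0$, both fighting the supersolution inequality, while the requirement $\bar S(\tau)\to+\infty$ as $\tau\to 0^+$ forces $|\bar S'(\tau)|$ to blow up. This can be made to work (the source grows like $e^{s/p}$ and beats a logarithmic-in-$\tau$ layer), but it is not automatic. The paper avoids the layer entirely: it argues by contradiction on a hypothetical strip where $v>0$, so the function constraint drops out and the explicit barrier $\tilde v(s,\tau)=\frac{p}{1-p}-\frac{\alpha}{2}(e^{s/p}-e^{s_0/p})\tau$ — which is \emph{allowed} to become negative — yields $v<0$ somewhere, contradicting $v\geq 0$. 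That is a one-line verification with no gluing. I would either adopt that argument or carry out your calibration explicitly.
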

\begin{proof}
The result \eqref{eq:fbz} follows from definitions of $Z(\tau)$ and $\mathcal{F}$. By the variational inequality \eqref{eq:VIv}, we have
$$\partial_\tau v-\mathcal L_sv+e^{\frac{1-p}{p}s}\tilde f(s)\geq0,\quad\text{if}\ v=0,$$
which leads to $s\geq-p\ln\alpha-\ln(1-p)$. Hence, by the definition of $Z(\tau)$, we know $Z(\tau)>-p\ln\alpha$.

Next, we show that $Z(\tau)$ is finite for $\tau\in(0,T]$. Suppose that there exists a $\tau_0>0$ such that $Z(\tau_0)=+\infty$. It then holds that
$$v(s,\tau)>0, \quad (s,\tau)\in\mathbb R\times(0,\tau_0],$$
which implies that $v(s,\tau)$ satisfies
\begin{equation}\label{eq:VIvs0}
\left\{\begin{array}{ll}
\min\left\{\partial_\tau v-\mathcal L_sv+e^{\frac{1-p}{p}s}\tilde f(s),-\partial_sv(s,\tau)\right\}= 0,\quad (s,\tau)\in\mathbb R\times(0,\tau_0],\\
v(s,0)=\frac{p}{1-p},\quad s\in\mathbb R.
\end{array}\right.
\end{equation}
Note that
$$e^{\frac{1-p}{p}s}\tilde f(s)=\alpha e^{\frac 1p s}-\frac{\alpha^{1-p}}{1-p}e^{\frac{1-p}{p}s}\rightarrow +\infty\ \text{as}\ s\rightarrow+\infty.$$
There exists a constant $s_0$ such that
$$e^{\frac{1-p}{p}s}\tilde f(s)\geq \frac{\alpha}{2}e^{\frac 1p s},\quad s\geq s_0.$$
Let $\tilde v(s,\tau):=\frac{p}{1-p}-\frac{\alpha}{2}(e^{\frac 1p s}-e^{\frac 1p s_0})\tau$. We next show that $\tilde v(s,\tau)$ is a super-solution to \eqref{eq:VIvs0} on $(s_0,+\infty)\times(0,\tau_0)$. In view that
\begin{eqnarray*}
&&\partial_\tau \tilde v-\mathcal L_s\tilde v+e^{\frac{1-p}{p}s}\tilde f(s) \\
&=&-\frac{\alpha}{2}(e^{\frac 1p s}-e^{\frac{1}{p}s_0})+\left(\frac{\delta}{p}-\frac{\mu^2(1-p)}{2p^2\sigma^2}\right)\left(\frac{p}{1-p}+\frac{\alpha}{2}e^{\frac{1}{p}s_0}\tau\right)+e^{\frac{1-p}{p}s}\tilde f(s)\geq0. 
\end{eqnarray*}
Together with $\frac{p}{1-p}\geq v(s,\tau)$, we know that $\tilde v(s,\tau)$ satisfies
\begin{eqnarray*}
\left\{\begin{array}{ll}
\min\left\{\partial_\tau \tilde v-\mathcal L_s\tilde v+e^{\frac{1-p}{p}s}\tilde f(s),-\partial_s\tilde v(s,\tau)\right\}\geq 0,\quad & (s,\tau)\in (s_0,+\infty)\times(0,\tau_0],\\
\tilde v(s,0)=\frac{p}{1-p},& s\in (s_0,+\infty),\\
\tilde v(s_0,\tau)=\frac{p}{1-p}\geq v(s_0,\tau), & \tau\in(0,\tau_0].
\end{array}\right.
\end{eqnarray*}
The comparison principle implies that
$$v(s,\tau)\leq \tilde v(s,\tau),\quad (s,\tau)\in(s_0,+\infty)\times(0,\tau_0].$$
Moreover, it is easy to see that
$$\tilde v(s,\tau)<0,\quad s>p\ln\Big(e^{\frac 1ps_0}+\frac{2}{\alpha}\frac{p}{(1-p)\tau_0}\Big),$$
which implies that
$$v(s,\tau)<0,\quad s>p\ln\Big(e^{\frac 1ps_0}+\frac{2}{\alpha}\frac{p}{(1-p)\tau_0}\Big),$$
leading to a contradiction.

Finally, we show the strict monotonicity of $Z(\tau)$. For any $\tau_0>0$, suppose $Z(\tau_0)=s_0$, i.e. $v(s_0,\tau_0)=0.$
According to $\partial_\tau v(s,\tau)\leq0$, we have
$v(s_0,\tau)=0$, $\tau\in(\tau_0,T]$. By the definition \eqref{eq:ztau} of  $Z(\tau)$, we obtain
$$Z(\tau)\leq s_0,\quad \tau\in(\tau_0,T].$$
Hence, we obtain the monotonicity of $Z(\tau)$. Suppose that $Z(\tau)$ is not strictly monotone and there exists $\tau_1<\tau_2$, such that$$Z(\tau)=s_0,\quad \tau\in[\tau_1,\tau_2].$$
Denote $\Gamma:=\{s_0\}\times(\tau_1,\tau_2)$. Then we have
\begin{eqnarray*}
\left\{\begin{array}{ll}
\partial_\tau v-\mathcal L_s v=-e^{\frac{1-p}{p}s}\tilde f(s),\quad (s,\tau)\in (s_0-\varepsilon,s_0)\times(\tau_1,\tau_2), \\
v|_{\Gamma}=\partial_sv|_{\Gamma}=0,
\end{array}\right.
\end{eqnarray*}
where $\varepsilon$ is small enough. It then follows that
\begin{eqnarray*}
\left\{\begin{array}{ll}
\partial_\tau (\partial_\tau v)-\mathcal L_s  (\partial_\tau v)=0,\quad (s,\tau)\in (s_0-\varepsilon,s_0)\times(\tau_1,\tau_2), \\
\partial_\tau v|_{\Gamma}=\partial_{s\tau}v|_{\Gamma}=0.
\end{array}\right.
\end{eqnarray*}
Together with the fact that $\partial_\tau v\leq0$, Hopf's principle implies that
$$\partial_{s\tau}v|_{\Gamma}>0 \quad\mbox{ or }\quad \partial_{\tau}v\equiv 0,\quad (s,\tau)\in (s_0,s_0+\varepsilon)\times(\tau_1,\tau_2),$$
leading to a contradiction.

Moreover, it follows from the monotonicity of $Z(\tau)$ and $v(s,0)=\frac{p}{1-p}>0$ that the result \eqref{lim:z0} holds.
\end{proof}

Let us next focus on the domain $\{s\leq Z(\tau)\}$. In view of the definition of $Z(\tau)$, $v(s,\tau)$ is a unique $W^{2,1}_{q,loc}(\Omega)$ solution to problem \eqref{eq:VIv}.
On the domain $\{s\leq Z(\tau)\}$, $v(s,\tau)$ satisfies
\begin{equation}\label{eq:VIv1}
\left\{\begin{array}{ll}
\min\left\{\partial_\tau v-\mathcal L_sv+e^{\frac{1-p}{p}s}\tilde f(s),-\partial_sv(s,\tau)\right\}= 0, & (s,\tau)\in\tilde\Omega, \\
v(s,0)=\frac{p}{1-p}, & s\in\mathbb R, \\
\partial_s v(Z(\tau),\tau)=0, & \tau\in(0,T],
\end{array}\right.
\end{equation}
where $\tilde\Omega:=(-\infty,Z(\tau))\times(0,T]$. 
In order to analyze the free boundary arising from gradient constraint,
we follow the similar idea in \cite{CY12} and consider the parabolic obstacle problem
\begin{equation}\label{eq:VIw}
\left\{\begin{array}{ll}
\max\big\{\partial_\tau w(s,\tau)-\mathcal L_sw(s,\tau)-g(s), w(s,\tau)\big\}=0,\quad & (s,\tau)\in\tilde\Omega,\\
w(s,0)=0,& s\in\mathbb R,\\
w(Z(\tau),\tau)=0,&\tau\in(0,T],
\end{array}\right.
\end{equation}
where
\begin{equation}
g(s)=\Big(-e^{\frac{1-p}{p}s}\tilde f(s)\Big)'
=\frac{\alpha}{p}e^{\frac{1-p}{p}s}(\alpha^{-p}-e^s)I_{\{e^s\geq\alpha^{-p}\}}+\frac 1 pe^{\frac{1-p}{p}s}(1-e^{s})I_{\{e^s\leq1\}}.\label{eq:g}
\end{equation}
%

 \subsection{Characterization of the free boundary in problem \eqref{eq:VIw}}

 Following the standard penalty approximation method as to show the existence of solution to the problem \eqref{eq:VIv},
  it is easy to conclude the next result, and its proof is hence omitted.
 

\begin{lemma} \label{thm:w}
There exists a unique $w(s,\tau)\in W^{2,1}_{q,loc}(\tilde\Omega)\cap C(\overline{\tilde\Omega})$ to problem \eqref{eq:VIw} for any $1<q<+\infty$.
\end{lemma}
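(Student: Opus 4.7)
The plan is to follow the classical penalty approximation method for parabolic obstacle problems with a moving lateral boundary. First, I would truncate the unbounded strip to a family of bounded domains $\tilde\Omega_n := \{(s,\tau) : -n < s < Z(\tau),\ 0 < \tau \leq T\}$, which is admissible for each fixed $\tau>0$ since Proposition \ref{thm:mf} gives $Z(\tau) < \infty$. I would then introduce a smooth monotone penalty $\beta_\varepsilon \in C^2(\mathbb{R})$ with $\beta_\varepsilon(\xi) = 0$ for $\xi \leq -\varepsilon$, $\beta_\varepsilon'(\xi) \geq 0$, $\beta_\varepsilon''\geq 0$, and $\beta_\varepsilon(0) \to +\infty$ as $\varepsilon \to 0^+$, so that $\beta_\varepsilon$ penalizes violation of the upper obstacle $w \leq 0$.

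On each $\tilde\Omega_n$ I solve the penalized semilinear problem
\begin{equation*}
\partial_\tau w^{n,\varepsilon} - \mathcal{L}_s w^{n,\varepsilon} + \beta_\varepsilon(w^{n,\varepsilon}) = g(s),
\end{equation*}
with zero initial data and zero Dirichlet data on $\{s=-n\}$ and on $\{s=Z(\tau)\}$. Since $\mathcal{L}_s$ has constant coefficients and $g$ is bounded and continuous by \eqref{eq:g}, classical linear and semilinear parabolic theory yields a unique solution in $W^{2,1}_q$ (indeed in $C^{2+\alpha,1+\alpha/2}$ after a suitable modification) for every fixed $(n,\varepsilon)$; smoothness of the moving curve $s=Z(\tau)$ is not needed because $Z(\tau)$ is continuous and strictly monotone by Proposition \ref{thm:mf}, so the lateral boundary can be flattened by a standard change of variable.

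Next, I would derive estimates uniform in $(n,\varepsilon)$. The constant function $0$ compared with $-\|g\|_\infty\,\tau$ gives a uniform $L^\infty$-bound on $w^{n,\varepsilon}$, and testing against $(w^{n,\varepsilon})_+$ together with monotonicity of $\beta_\varepsilon$ yields uniform $L^\infty$-control of the penalty term $\beta_\varepsilon(w^{n,\varepsilon})$. Standard interior $L^q$ parabolic estimates then provide uniform local $W^{2,1}_q$-bounds. A diagonal extraction as $\varepsilon \to 0^+$ and $n \to +\infty$ produces a limit $w \in W^{2,1}_{q,loc}(\tilde\Omega) \cap C(\overline{\tilde\Omega})$ satisfying the constraint $w\leq 0$ and solving the PDE on the non-contact set, which is precisely \eqref{eq:VIw}. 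Uniqueness is immediate from the parabolic comparison principle applied to the difference of any two candidate solutions, both of which vanish on the parabolic boundary of $\tilde\Omega$.

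The main technical obstacle I anticipate is controlling behavior near the initial corner, where $Z(\tau) \to +\infty$ as $\tau \to 0^+$ by \eqref{lim:z0}. One must check that the zero boundary data on $\{s=Z(\tau)\}$ is compatible with the zero initial data as $\tau\downarrow 0$, and that the local $W^{2,1}_q$-estimates do not deteriorate when letting $n\to\infty$. Both are handled by exploiting the favorable sign and decay of $g(s)$ outside a bounded region (note $g(s)\equiv 0$ for $1<e^s<\alpha^{-p}$ and decays in the outer regions), which enforces the obstacle constraint to be inactive for $s$ sufficiently close to $Z(\tau)$, so all estimates can be carried out on compact subsets of $\tilde\Omega$.
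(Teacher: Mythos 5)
Your strategy is exactly the one the paper invokes: the authors omit the proof of Lemma \ref{thm:w}, stating only that it follows from the ``standard penalty approximation method,'' and your penalization, uniform estimates, and passage to the limit constitute precisely that standard argument, so in approach you and the paper coincide.

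There is, however, one concrete error in your execution. You assert that $g$ is bounded and that it ``decays in the outer regions,'' but from \eqref{eq:g} one has $g(s)=\frac{\alpha}{p}e^{\frac{1-p}{p}s}(\alpha^{-p}-e^s)\sim -\frac{\alpha}{p}e^{\frac{1}{p}s}\to-\infty$ as $s\to+\infty$; since $Z(\tau)\to+\infty$ as $\tau\to0^+$ by \eqref{lim:z0}, the domain $\tilde\Omega$ contains points with arbitrarily large $s$, so $\|g\|_{L^\infty(\tilde\Omega)}=+\infty$. This propagates: your sets $\tilde\Omega_n$ are truncated only from below in $s$ and are therefore \emph{not} bounded; the solvability of the penalized problem ``for bounded continuous $g$'' does not apply to them as stated; and the barrier $-\|g\|_\infty\,\tau$ used for the uniform $L^\infty$ bound is vacuous. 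The repair is routine but must be made explicit: truncate in $s$ from above as well (or work on $\{\tau\ge 1/n\}$, where $s\le Z(1/n)<\infty$ and $g$ is genuinely bounded), and replace the linear-in-$\tau$ barrier by an exponential one of the form $-C\bigl(e^{\frac{1}{p}s}-e^{\frac{1}{p}s_0}\bigr)\tau$, exactly as in the super-solution $\tilde v$ constructed in the proof of Proposition \ref{thm:mf}. The favorable sign $g(s)<0$ for $s>-p\ln\alpha$, which you do correctly identify, then shows the obstacle is inactive near the initial corner, and the interior $W^{2,1}_{q}$ estimates (which only require $g\in L^\infty_{loc}$) survive the diagonal limit. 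With that adjustment the proof goes through.
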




To study some properties of the free boundary in \eqref{eq:VIw}, let us first define
\begin{eqnarray*}
&&\mathcal{G}_0:= \big\{(s,\tau)\in\tilde\Omega ~ | ~ w(s,\tau)=0\big\}, \\
&&\mathcal{E}_0 := \big\{(s,\tau)\in \tilde\Omega ~ | ~ w(s,\tau)<0\big\}.
\end{eqnarray*}

\begin{proposition}\label{thm:Ts}
There exists a function $T(s):(-\infty,+\infty)\rightarrow [0,T]$ such that
\begin{equation}
\mathcal{G}_0 = \big\{(s,\tau) ~ | ~ s\in\mathbb R,0\leq\tau\leq T(s)\big\},\label{eq:fG}
\end{equation}
and $T(s)$ is decreasing in $s$ such that
\begin{eqnarray}
T(s)=0,\quad s>0,\label{eq:T0}\\
T(s)>0,\quad s<0.\label{ie:T0}
\end{eqnarray}
Particularly, $T(s)$ is strictly decreasing on $\{s ~ | ~ 0<T(s)<T\}$ and $T(s)$ is continuous.
\end{proposition}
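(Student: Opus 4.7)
The plan is to derive the time-monotonicity $\partial_\tau w\le 0$ via a standard shift-comparison, read off the representation \eqref{eq:fG}, and then exploit the sign structure of $g(s)$ to obtain the remaining properties of $T$ through a combination of barrier, maximum-principle, and Hopf-type arguments.

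For the time-monotonicity, set $w_\delta(s,\tau):=w(s,\tau+\delta)$ for $\delta\in(0,T)$. Since $w\le 0$, the initial data obey $w_\delta(s,0)=w(s,\delta)\le 0=w(s,0)$; because $Z(\tau)$ is decreasing by Proposition \ref{thm:mf}, the shifted parabolic domain lies inside the original. The comparison principle for the obstacle problem \eqref{eq:VIw} then yields $w_\delta\le w$, i.e.\ $\partial_\tau w\le 0$. Combined with $w(s,0)=0$ and $w\le 0$, this forces $\{\tau\in[0,T]:w(s,\tau)=0\}$ to be a closed interval $[0,T(s)]$ with $T(s):=\sup\{\tau\in[0,T]:w(s,\tau)=0\}$, yielding \eqref{eq:fG}.

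From \eqref{eq:g} one has $g(s)>0$ for $s<0$, $g\equiv 0$ on $[0,-p\ln\alpha]$, and $g(s)<0$ for $s>-p\ln\alpha$. For \eqref{ie:T0}: if $T(s_0)=0$ with $s_0<0$, then $w(s_0,\tau)<0$ for $\tau>0$ and by continuity the PDE $\partial_\tau w-\mathcal L_s w=g(s)$ holds on a parabolic neighborhood of $(s_0,0^+)$; letting $\tau\to 0^+$ and using $w|_{\tau=0}\equiv 0$ gives $\partial_\tau w(s_0,0^+)=g(s_0)>0$, contradicting $\partial_\tau w\le 0$. For \eqref{eq:T0} I split the proof according to the sign of $g$. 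For $s>-p\ln\alpha$ with $g(s)<0$, the linear function $\underline{w}(s,\tau):=-\varepsilon\tau$ (with $0<\varepsilon<|g(s)|$ and $\tau$ small enough that $-\varepsilon+c\varepsilon\tau\ge g(s)$, where $c$ is the zeroth-order coefficient of $\mathcal L_s$) is a local super-solution of \eqref{eq:VIw}, so comparison yields $w(s,\tau)\le\underline{w}<0$ and hence $T(s)=0$. For the neutral strip $0<s\le-p\ln\alpha$ where $g\equiv 0$ no such barrier exists; I propagate strict negativity via Hopf's principle. Setting $\bar s:=\sup\{s>0:T(s)>0\}$ and assuming toward contradiction that $\bar s>0$, the weak monotonicity of $T$ (established below) produces an open two-dimensional left neighborhood of $\{s=\bar s\}\times(0,\tau_1)$ (some $\tau_1>0$) on which $w\equiv 0$, so by $W^{2,1}_{q,loc}$ regularity $\partial_s w(\bar s^-,\tau)=0$. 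In the adjacent equation region $\{s>\bar s\}$, where $w<0$ strictly by the barrier step (if $\bar s=-p\ln\alpha$) or by maximality of $\bar s$ together with the strong maximum principle for the homogeneous equation (if $\bar s<-p\ln\alpha$), Hopf's principle applied at the lateral boundary $\{s=\bar s\}$ on which $w$ attains its maximum $0$ forces $\partial_s w(\bar s^+,\tau)<0$, contradicting continuity of $\partial_s w$; hence $\bar s\le 0$, proving \eqref{eq:T0}.

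Weak monotonicity of $T$---that each time-slice $\mathcal G_0\cap\{\tau=\tau_0\}$ is a left-ray $(-\infty,\bar s(\tau_0)]$---is obtained from a comparison argument exploiting the sign structure of $g$, following the standard theory for parabolic obstacle problems in \cite{CY12}. Strict monotonicity on $\{0<T(s)<T\}$ adapts the Hopf argument used for $Z(\tau)$ in Proposition \ref{thm:mf}: were $T\equiv\tau_0$ on some interval $[s_1,s_2]$, setting $\Gamma:=[s_1,s_2]\times\{\tau_0\}$ and differentiating $\partial_\tau w-\mathcal L_s w=g(s)$ in $\tau$ on the equation region $\{\tau>\tau_0\}$ would give $\partial_\tau w=\partial_{s\tau}w=0$ on $\Gamma$, and Hopf would force either $\partial_\tau w\equiv 0$ (contradicting strict descent elsewhere) or $\partial_{s\tau}w|_\Gamma>0$ (contradicting $\partial_s w\equiv 0$ in $\mathcal G_0$). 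Continuity of $T$ then follows from monotonicity together with the closedness of $\mathcal G_0$. The main obstacle is the propagation of strict negativity into the neutral strip $0<s<-p\ln\alpha$: with $g\equiv 0$ there, no direct barrier is available, and one must combine Hopf's principle with the $C^{1,\alpha}$ regularity of $w$; the weak monotonicity of $T$ is the other delicate point, since $g$ is not monotone in $s$ and a direct $\partial_s w$-sign argument by differentiating the PDE is not available.
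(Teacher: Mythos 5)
There is a genuine gap at the foundation of your argument. The global time-monotonicity $\partial_\tau w\le 0$ on $\tilde\Omega$ is not only unproved by your shift comparison --- it is false. On the lateral boundary of the shifted domain you have $w_\delta(Z(\tau+\delta),\tau)=w(Z(\tau+\delta),\tau+\delta)=0$, whereas $w(Z(\tau+\delta),\tau)<0$, because $Z$ is strictly decreasing with $Z(\tau)>-p\ln\alpha$ (Proposition \ref{thm:mf}) and $w<0$ wherever $g<0$, i.e.\ for $s>-p\ln\alpha$; so the ordering of the two functions on the parabolic boundary of the smaller domain is the wrong way around and the comparison principle gives nothing. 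Worse, along the vertical segment $\{Z(\tau_0)\}\times(0,\tau_0]$ the solution rises from strictly negative values to $0$ at $\tau=\tau_0$, so $\partial_\tau w>0$ somewhere near the moving boundary. Since your derivation of \eqref{eq:fG} rests entirely on this monotonicity, the interval structure $\{\tau: w(s,\tau)=0\}=[0,T(s)]$ is not established by your argument. (The paper obtains the $\tau$-connectedness of $\mathcal{G}_0$ from the structure conjectured in Remark \ref{rmk:omega}, and only uses the one-sided facts $\partial_\tau w(s,0)\le 0$ and $\partial_\tau w(s,T(s))\le 0$, which follow from $w\le 0$ together with $w(s,0)=w(s,T(s))=0$ and do not require global monotonicity.)

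The second load-bearing step, the $s$-monotonicity of $T$ (equivalently, that each time-slice of $\mathcal{G}_0$ is a left half-line), is deferred in your proposal to ``standard theory'' and explicitly flagged by you as a delicate unresolved point; but this is precisely where the paper does real work: for $s_0\le -p\ln\alpha$ with $T(s_0)>0$ it extends $w$ by zero on $(-\infty,s_0]\times[0,T(s_0)]$, checks that the extension is again a $W^{2,1}_{q,loc}$ solution of \eqref{eq:VIw} on that time strip (using $g\ge 0$ there), and concludes $w\equiv 0$ on $(-\infty,s_0]\times[0,T(s_0)]$ by uniqueness. Without this step your proof of \eqref{eq:T0} is also circular, since your Hopf argument on the neutral strip invokes the weak monotonicity ``established below.'' Two smaller issues: the local barrier $-\varepsilon\tau$ for $s>-p\ln\alpha$ needs control of $w$ on the lateral edges of the compact $s$-interval, which you do not have (the paper avoids this entirely by noting that at any interior point with $w=0$ the variational inequality forces $g(s)\ge 0$, hence $s\le -p\ln\alpha$); and continuity of $T$ does not follow from ``monotonicity plus closedness of $\mathcal{G}_0$'' --- that only gives upper semicontinuity, and a decreasing u.s.c.\ function can still jump down. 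Ruling out a vertical segment of the free boundary requires the Hopf-type argument used for the strict monotonicity of $Z(\tau)$, which is what the paper invokes.
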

\begin{proof}
The conjectured free boundary $\omega^*(t)$ in Remark \ref{rmk:omega} and all the transformations in Section 2 imply that $\mathcal{G}_0$ is connected in $\tau$ direction. Note that $w(s,0)=0$, let us define
$$T(s):=\sup\big\{\tau ~ | ~ w(s,\tau)=0\big\},\quad \forall s\in\mathbb{R}.$$
By the definitions of $\mathcal{G}_0$ and $T(s)$, we get the desired result \eqref{eq:fG}.

We next show the monotonicity of $T(s)$. By the variational inequality \eqref{eq:VIw}, we have
$$g(s)\geq0,\quad{\rm if}\;w(s,\tau)=0,$$
which implies $s\leq -p\ln\alpha$. That is, $w(s,\tau)<0$ for $s> -p\ln\alpha$, $\tau\in(0,T]$. It follows that 
$$\big\{(s,\tau) ~ | ~ s> -p\ln\alpha, ~ \tau\in(0,T]\big\}\subset\mathcal{E}_0.$$
For any $s_0\leq-p\ln\alpha$ such that $T(s_0)>0$, we define an auxiliary function
\begin{eqnarray*}
\tilde w(s,\tau):=\left\{\begin{array}{ll}
0, & \mbox{if } (s,\tau)\in(-\infty,s_0]\times[0,T(s_0)],\\
w(s,\tau), & \mbox{if } (s,\tau)\in\{(s_0,+\infty)\times[0,T(s_0)]\}\cap\tilde\Omega.
\end{array}\right.
\end{eqnarray*}
We show that $\tilde w(s,\tau)$ is the solution to problem \eqref{eq:VIw} in the domain $\{\mathbb R\times [0,T(s_0)]\}$.
By the definition of $\tilde w(s,\tau)$, we have that $\tilde w(s,0)=0$, $\tilde w(s,\tau)\leq 0$, and
\begin{eqnarray*}
\left\{\begin{array}{ll}
\partial_\tau\tilde  w-\mathcal L_s\tilde w=0\leq g(s), & \mbox{if } (s,\tau)\in(-\infty,s_0]\times[0,T(s_0)],\\
\partial_\tau\tilde w-\mathcal L_s\tilde w=\partial_\tau w-\mathcal L_s w\leq g(s), & \mbox{if } (s,\tau)\in\{(s_0,+\infty)\times[0,T(s_0)]\}\cap\tilde\Omega.
\end{array}\right.
\end{eqnarray*}
Moreover, if $\tilde w(s,\tau)<0$, then $w(s,\tau)<0$. Hence, we have
$$\partial_\tau\tilde w-\mathcal L_s\tilde w=\partial_\tau w-\mathcal L_s w= g(s),\quad \tilde w(s,\tau)<0.$$
Thus, $\tilde w(s,\tau)$ is a $W^{2,1}_{q,loc}$-solution to problem \eqref{eq:VIw} in the domain $\{\mathbb R\times [0,T(s_0)]\}\cap\tilde\Omega$. The uniqueness of the solution to \eqref{eq:VIw} yields that
$$w(s,\tau)=\tilde w(s,\tau)=0,\quad (s,\tau)\in(-\infty,s_0]\times[0,T(s_0)].$$
By the definition of $T(s)$, we obtain $T(s)\geq T(s_0),\quad \forall s<s_0$, and $T(s)$ is decreasing in $s$.

Next, we show \eqref{eq:T0}. Suppose that there exists $s_0>0$ such that $T(s_0)=\tau_0>0$, by the monotonicity of $T(s)$, we have
\begin{eqnarray*}
\left\{\begin{array}{ll}
\partial_\tau w-\mathcal L_sw\leq g(s)\leq0,\quad &(s,\tau)\in\{[0,+\infty)\times [0,\tau_0]\}\cap\tilde\Omega, \\
w(0,\tau)=0, \quad w(Z(\tau),\tau)=0, & \tau\in(0,\tau_0], \\
w(s,0)=0, & s\in\mathbb R^+.
\end{array}\right.
\end{eqnarray*}
The strong maximum principle implies that
$$w(s,\tau)<0,\quad (s,\tau)\in(0,+\infty)\times (0,\tau_0].$$
It contradicts with $w(s_0,\tau)=0,\quad \tau\in(0,\tau_0].$ Hence, \eqref{eq:T0} holds true.

In view of the definition of $T(s)$ and the fact $w(s,\tau)\leq0$, we have
\begin{eqnarray}\label{ie:ptw}
\partial_\tau w(s,0)\leq 0,\quad \forall s\in \mathbb R,\label{ie:ptw0} \\
\partial_\tau w(s,T(s))\leq 0,\quad \forall s\in \mathbb R.\label{ie:ptw}
\end{eqnarray}
%
Suppose that there exists $s_1<0$ such that $T(s_1)=0$, then we have
$$\partial_\tau w(s,0)=\mathcal L_s w(s,0)+g(s)>0,\quad s\in(s_1,0),$$
yielding a contraction to \eqref{ie:ptw0}. Hence \eqref{ie:T0} follows.

Thanks to \eqref{ie:ptw},  we claim the strict monotonicity of $T(s)$ in $\big\{s ~ | ~ 0<T(s)<T\big\}$. Indeed, suppose that exists $s_1<s_2\leq0$ such that
$$0<T(s_1)=T(s_2)<T.$$
Then we have
$$w(s,T(s_2))=0,\quad s\in(s_1,s_2).$$
Applying the equation $\partial_\tau w-\mathcal L_s w=g(s)$ at $(s_1,s_2)\times\{T(s_2)\}$, we have
$$\partial_\tau w|_{\tau=T(s_2)}=(\mathcal L_s w+g(s))|_{\tau=T(s_2)}>0,\quad s\in(s_1,s_2),$$
which contradicts with \eqref{ie:ptw}. The claim therefore holds.

Following the proof of the strictly monotonicity of $Z(\tau)$ in Theorem \ref{thm:Ts}, we can conclude the continuity of $T(s)$.
\end{proof}

As $T(s)$ strictly decreases and is continuous in $s$ on $0<T(s)<T$, there exists an inverse function of $T(s)$ denoted by $S(\tau):=T^{-1}(\tau)$, $\tau\in(0,T)$.  
Let us define
\begin{eqnarray}\label{freeS}
S(\tau):=\left\{\begin{array}{ll}
T^{-1}(\tau), & \mbox{if } \tau\in(0,T), \\
\sup\big\{s ~ | ~ T(s)=T\big\}, & \mbox{if } \tau=T.
\end{array}\right.
\end{eqnarray}
%

\begin{lemma} \label{thm:Sb} 
We have that
\begin{equation}
\mathcal{G}_0=\big\{(s,\tau) ~ | ~ s\leq S(\tau),\tau\in(0,T]\big\},\label{eq:fS}
\end{equation}
where $S(\tau)$ is continuous and strictly decreases in $\tau$ with
\begin{equation}
S(0)=\lim_{\tau\rightarrow0^+} S(\tau)=0.\label{eq:S0}
\end{equation}
\end{lemma}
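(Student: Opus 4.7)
The plan is to deduce \eqref{eq:fS} directly from the characterization $\mathcal{G}_0=\{(s,\tau) \mid 0\leq\tau\leq T(s)\}$ in Proposition \ref{thm:Ts} by inverting $T(s)$, and then to transfer the monotonicity, continuity, and boundary behaviour of $T$ to the corresponding properties of $S$.

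First I would establish \eqref{eq:fS}. Proposition \ref{thm:Ts} provides that $T$ is continuous and decreasing on $\mathbb{R}$, strictly decreasing on $\{s \mid 0 < T(s) < T\}$, vanishes for $s\geq 0$ (by \eqref{eq:T0} and continuity), and is strictly positive for $s < 0$ (by \eqref{ie:T0}). Hence for each $\tau \in (0,T)$ the level set $\{s \mid T(s) = \tau\}$ is a single point, namely $S(\tau) = T^{-1}(\tau)$, so the equivalence $\tau \leq T(s) \Longleftrightarrow s \leq S(\tau)$ holds pointwise. At $\tau = T$, the definition $S(T) = \sup\{s \mid T(s) = T\}$ absorbs any plateau of $T$ at the value $T$ and yields the same equivalence. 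Substituting into the description of $\mathcal{G}_0$ from \eqref{eq:fG} then produces \eqref{eq:fS}.

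Next, continuity and strict monotonicity of $S$ on $(0,T)$ are immediate: $S$ is the inverse of the continuous strictly decreasing bijection $T : \{s \mid 0 < T(s) < T\} \to (0,T)$. Left-continuity at $\tau = T$ follows from monotonicity together with continuity of $T$, since any accumulation point of $S(\tau)$ as $\tau \uparrow T$ is a value at which $T$ attains $T$, and is therefore $\leq S(T)$.

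Finally, for \eqref{eq:S0}, strict monotonicity of $S$ guarantees that $s_0 := \lim_{\tau\to 0^+} S(\tau) = \sup_{\tau > 0} S(\tau)$ exists. Because $T(s) = 0$ for all $s \geq 0$, any point in $\{S(\tau) \mid \tau > 0\}$ must be negative, so $s_0 \leq 0$. If $s_0 < 0$ were to hold, \eqref{ie:T0} would give $T(s_0) > 0$; choosing any $0 < \tau < T(s_0)$ and using the strict monotonicity of $T$ would then yield $S(\tau) > s_0$, contradicting the supremum definition of $s_0$. Hence $s_0 = 0$, which is exactly \eqref{eq:S0}. The only delicate point in this programme is ensuring that the various monotonicity and continuity properties of $T$ line up to give a genuine single-valued inverse on $(0,T)$ and to control $S$ up to the boundary $\tau = T$; once that is in hand, everything else is essentially bookkeeping of inverse-function properties.
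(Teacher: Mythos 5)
Your proposal is correct and follows essentially the same route as the paper: both deduce the properties of $S(\tau)$ by inverting $T(s)$ from Proposition \ref{thm:Ts}, with strict monotonicity of $T$ giving continuity of $S$, continuity of $T$ giving strict monotonicity of $S$, and \eqref{eq:T0}--\eqref{ie:T0} forcing $S(0)=0$. Your write-up is in fact somewhat more detailed than the paper's (which asserts the limit $S(0)=0$ without the explicit contradiction argument you supply), but the underlying ideas are identical.
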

\begin{proof}
%
Note that $T(s)$ strictly decreases in $\{s ~ | ~ 0<T(s)<T\}$, we know $S(\tau)$ decreases in $\tau$.
Because the strict monotonicity of $T(s)$ is equivalent to the continuity of $S(\tau)$ and the continuity of $T(s)$ is equivalent to the strict monotonicity of $S(\tau)$, we conclude that $S(\tau)\in C[0,T]$ and strictly decreases in $\tau$.

In view of the strict monotonicity of $S(\tau)$, let us define $S(0):=\lim_{\tau\rightarrow0^+} S(\tau)$. It follows from \eqref{eq:T0}-\eqref{ie:T0} that $S(0)=0$, which completes the proof.
\end{proof}

We next establish the dependence of $S(\tau)$ on the parameter $\alpha$ in the following result.

\begin{lemma} \label{thm:fbde}
The free boundary $S(\tau)$ of problem \eqref{eq:VIw} is decreasing in $\alpha$.
\end{lemma}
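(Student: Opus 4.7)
The strategy is to prove the analogous monotonicity in $\alpha$ for the obstacle-problem solution $w(s,\tau;\alpha)$ of \eqref{eq:VIw}, and then translate it into the monotonicity of $S(\tau)$ via the identification $\mathcal G_0=\{w=0\}$ in Lemma \ref{thm:Sb}. The key ingredient is the monotone dependence of the source $g(s;\alpha)$ on $\alpha$.

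The first step is to establish the pointwise inequality $g(s;\alpha_1)\geq g(s;\alpha_2)$ for every $\alpha_1<\alpha_2$. From \eqref{eq:g} the summand supported on $\{s\leq 0\}$ is independent of $\alpha$, while on the negative branch $\{s\geq -p\ln\alpha\}$ a direct differentiation gives
\[
\partial_\alpha g(s;\alpha)=\tfrac{1}{p}e^{\frac{1-p}{p}s}\bigl((1-p)\alpha^{-p}-e^s\bigr)<0,
\]
since $e^s\geq\alpha^{-p}>(1-p)\alpha^{-p}$ throughout that branch. Moreover, as $\alpha$ increases from $\alpha_1$ to $\alpha_2$ the transition point $-p\ln\alpha$ moves to the left, so every $s\in(-p\ln\alpha_2,-p\ln\alpha_1)$ leaves the middle zone (where $g=0$) and enters the negative branch (where $g<0$). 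Combining these two observations yields $g(\cdot;\alpha_1)\geq g(\cdot;\alpha_2)$ on all of $\mathbb R$.

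The second step is a comparison argument for \eqref{eq:VIw}. Write $w_i$ for the solution with $\alpha=\alpha_i$, defined on $\tilde\Omega_i=\{s<Z_i(\tau)\}\times(0,T]$. By Proposition \ref{thm:mf} we have $Z_i(\tau)>-p\ln\alpha_i>0$, while by Proposition \ref{thm:Ts} we have $S_i(\tau)\leq 0$, so both free boundaries lie strictly in the interior of the two domains. Extending each $w_i$ by $0$ across its lateral boundary, I claim that $w_1\geq w_2$ throughout. Indeed, at any interior point where $w_2>w_1$ one has $w_1<w_2\leq 0$, hence both $w_i$ are in their equation regions and
\[
(\partial_\tau-\mathcal L_s)(w_2-w_1)=g(s;\alpha_2)-g(s;\alpha_1)\leq 0,
\]
while $w_2-w_1=0$ at $\tau=0$ and $w_2-w_1\to 0$ as $|s|\to\infty$ (both vanish in the gradient-constraint region $\{s\leq S_i(\tau)\}$ and in the extension $\{s\geq Z_i(\tau)\}$). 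The strong maximum principle then rules out a positive interior maximum of $w_2-w_1$, yielding $w_1\geq w_2$.

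Since $w_i\leq 0$, the inequality $w_1\geq w_2$ gives $\{w_2=0\}\subseteq\{w_1=0\}$, i.e., $\mathcal G_{0,\alpha_2}\subseteq\mathcal G_{0,\alpha_1}$, which by \eqref{eq:fS} is precisely $S(\tau;\alpha_2)\leq S(\tau;\alpha_1)$. The main technical obstacle is the $\alpha$-dependence of the lateral boundary $Z_i(\tau)$, which prevents a naive invocation of the comparison principle on a common domain; the resolution is the a priori separation $S_i\leq 0<Z_i$, which keeps the free boundary uniformly away from the extension layer, together with the sign $g\leq 0$ in $\{s>Z_i(\tau)\}$, which ensures that the extensions of $w_i$ remain one-sided solutions of the correct sign so the maximum-principle step above is legitimate.
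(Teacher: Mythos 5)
Your first step is correct and coincides with the paper's: the computation $\partial_\alpha g=\tfrac1pe^{\frac{1-p}{p}s}\bigl((1-p)\alpha^{-p}-e^s\bigr)<0$ on $\{e^s\geq\alpha^{-p}\}$ is exactly the paper's $\frac{\partial g}{\partial\alpha}$, and your handling of the moving transition point is if anything more careful. The gap is in the comparison step, precisely at the point you flag as "the main technical obstacle." Two things go wrong. First, at a point where $w_2>w_1$ you only know $w_1<0$; $w_2$ may sit on the obstacle $w_2=0$, so the claim that \emph{both} $w_i$ are in their equation regions is false. Inside $\tilde\Omega_2$ this is harmless (the obstacle problem still gives $(\partial_\tau-\mathcal L_s)w_2\leq g_2$ there, so your display holds with $\leq$), but in the extension layer it is fatal: for $s>Z_2(\tau)>-p\ln\alpha_2$ one has $g_2(s)<0$, so the zero extension satisfies $(\partial_\tau-\mathcal L_s)w_2=0>g_2(s)$ — it is a strict \emph{supersolution} of the equation, not a subsolution, which is the opposite of what your closing sentence asserts. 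Hence $(\partial_\tau-\mathcal L_s)(w_2-w_1)=-g_1\geq 0$ on that layer (for $s\geq-p\ln\alpha_1$), and the inequality you need reverses. Second, the bad set is nonempty and the positive maximum you are trying to exclude genuinely occurs: since the source $-e^{\frac{1-p}{p}s}\tilde f(s)$ decreases in $\alpha$, comparison for \eqref{eq:VIv} on all of $\mathbb R$ gives $Z_2(\tau)\leq Z_1(\tau)$, and on the strip $\{Z_2(\tau)\leq s<Z_1(\tau)\}$ one has $w_2\equiv 0$ (extended) while $w_1<0$ there by Proposition \ref{thm:Ts} (which gives $w_1<0$ for $s>0$). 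So $w_2-w_1=-w_1>0$ on that strip and the asserted global inequality $w_1\geq w_2$ is simply false for the zero extensions; the maximum-principle argument does not close. Note also that restricting to the common domain does not rescue it, because the lateral data would then be ordered the wrong way ($w_2(Z_2(\tau),\tau)=0\geq w_1(Z_2(\tau),\tau)$).

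The paper follows the same overall route (monotone $g$, comparison principle, free-boundary inclusion via $\{w_2=0\}\subseteq\{w_1=0\}$), but it sidesteps exactly this difficulty by posing the two auxiliary problems \eqref{eq:VIwi} on a \emph{common} domain with the \emph{same} lateral boundary $Z(\tau)$, so that only the source varies and the standard comparison principle applies directly. If you want to account honestly for the $\alpha$-dependence of $Z_i(\tau)$, the extension-by-zero device must be replaced by something that keeps the argument away from the layer $\{Z_2\leq s<Z_1\}$ — for instance, carrying out the monotone-in-$\alpha$ comparison at the level of the boundary-free problem \eqref{eq:VIv} and only then passing to $w=\partial_s v$ type quantities — rather than patched by the sign of $g$ beyond $Z_i(\tau)$, which works against you.
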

\begin{proof}
By the definition of $g(s)$, we have
\begin{eqnarray*}
\frac{\partial g}{\partial\alpha}=\frac1pe^{\frac{1-p}{p}s}(\alpha^{-p}-e^s-p\alpha^{-p})I_{\{e^s\geq\alpha^{-p}\}}<0.
\end{eqnarray*}
Let us choose $0<\alpha_1<\alpha_2\leq 1$. It holds that
\begin{eqnarray*}
g_1(s)&\overset{\triangle}{=}&\frac{\alpha_1}{p}e^{\frac{1-p}{p}s}(\alpha_1^{-p}-e^s)I_{\{e^s\geq\alpha_1^{-p}\}}+\frac 1 pe^{\frac{1-p}{p}s}(1-e^{s})I_{\{e^s\leq1\}}\\
&>&\frac{\alpha_2}{p}e^{\frac{1-p}{p}s}(\alpha_2^{-p}-e^s)I_{\{e^s\geq\alpha_2^{-p}\}}+\frac 1 pe^{\frac{1-p}{p}s}(1-e^{s})I_{\{e^s\leq1\}}\overset{\triangle}{=}g_2(s).
\end{eqnarray*}
For $i=1,2$, denote $w_i(s,\tau)$ as the solution to the following problem
\begin{equation}\label{eq:VIwi}
\left\{\begin{array}{ll}
\max\big\{\partial_\tau w_i(s,\tau)-\mathcal L_sw_i(s,\tau)-g_i(s), w_i(s,\tau)\big\}=0, & (s,\tau)\in\tilde\Omega, \\
w_i(s,0)=0, & s\in\mathbb{R}, \\
w_i(Z(\tau),\tau)=0, & \tau\in(0,T]. 
\end{array}\right.
\end{equation}
The comparison principle implies that $w_2(s,\tau)\leq w_1(s,\tau)$, for $(s,\tau)\in\Omega$. In particular, we have that
$$w_2(s,\tau)\leq w_1(s,\tau)<0,\quad s>S_1(\tau), \tau\in(0,T],$$
where $S_i(\tau)$ is the free boundary of problem \eqref{eq:VIwi}, $i=1,2$. According to \eqref{eq:fS}, we have that $S_2(\tau)\leq S_1(\tau)$, i.e., $S(\tau)$ decreases in $\alpha$.
\end{proof}

\subsection{The solution to problem \eqref{eq:VItildeu}}\label{sec:sol}
In this subsection, we first use the solution to problem \eqref{eq:VIw} to construct the solution to problem \eqref{eq:VIv1}, and then obtain the solution to problem \eqref{eq:VIv}.
Using the transform \eqref{eq:vtildeu} between $\tilde u(s,\tau)$ and $v(s,\tau)$, we can further obtain the solution to problem \eqref{eq:VItildeu}. Following the same proof of Theorem 4.6 in \cite{CY12}, we can get the next result.

\begin{proposition}\label{thm:v}
Let $w(s,\tau)$ be the solution to problem \eqref{eq:VIw} and let us define
\begin{equation}\label{eq:solv}
\bar v(s,\tau):=\int_{Z(\tau)}^sw(\xi,\tau)d\xi+\frac{p}{1-p}\chi\{\tau=0\},\quad (s,\tau)\in\tilde\Omega.
\end{equation}
%
%
%
Then $\bar v(s,\tau)$ is the unique solution to problem \eqref{eq:VIv1} satisfying
\begin{eqnarray}
&&\bar v(s,\tau)\in C^{2,1}(\tilde\Omega)\cap C(\overline{\tilde\Omega}),\label{reg:v}\\
&&\partial_s\bar v(s,\tau)\in W^{2,1}_{q,loc}(\tilde\Omega)\cap C(\overline{\tilde\Omega})\label{reg:psv}.
\end{eqnarray}
Moreover, if we define
\begin{equation}\label{def:v}
v(s,\tau)=\left\{
\begin{array}{ll}
\bar v(s,\tau), & \mbox{if } s\leq Z(\tau),\\
0, & \mbox{if } s>Z(\tau),
\end{array}\right.
\end{equation}
then $v(s,\tau)\in W^{2,1}_{q,loc}(\Omega)\cap C(\overline{\Omega})$ is the solution to problem \eqref{eq:VIv}. In addition, let $S(\tau)$ be given in \eqref{freeS} and let
$Z(\tau)$ be given in \eqref{eq:ztau}, $S(\tau)$ and $Z(\tau)$ are free boundaries of problem \eqref{eq:VIv} such that
\begin{eqnarray}
&&\mathcal{F} = \big\{v=0\}=\{(s,\tau) ~ | ~ s\geq Z(\tau),\tau\in(0,T]\big\},\label{eq:fv0}\\
&&\mathcal{G} = \big\{-\partial_sv=0,v>0\}=\{(s,\tau) ~ | ~ s\leq S(\tau),\tau\in(0,T]\big\},\label{eq:fv1}\\
&&\mathcal{E} = \big\{-\partial_sv>0,v>0\}=\{(s,\tau) ~ | ~ S(\tau)<s<Z(\tau),\tau\in(0,T]\big\}.\label{eq:fv2}
\end{eqnarray}
%
\end{proposition}

We also have the next result

\begin{lemma}\label{thm:pssv}
The solution $v(s,\tau)$ to problem \eqref{eq:VIv} satisfies
\begin{eqnarray}
&&\partial_{s}v(s,\tau)-\frac{1-p}{p}v(s,\tau)<0,\quad (s,\tau)\in\tilde\Omega,\label{ie:psv}\\
&&\partial_{ss}v(s,\tau)-\frac{2-p}{p}\partial_{s}v(s,\tau)+\frac{1-p}{p^2}v(s,\tau)>0,\quad (s,\tau)\in\tilde\Omega.\label{ie:pssv}
\end{eqnarray}
\end{lemma}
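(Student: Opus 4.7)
The plan is to handle the two inequalities separately, exploiting the partition of $\tilde\Omega$ into the gradient-constraint region $\mathcal{G}=\{s\leq S(\tau)\}$ and the equation region $\mathcal{E}=\{S(\tau)<s<Z(\tau)\}$ established in Proposition \ref{thm:v}. The first inequality \eqref{ie:psv} is essentially immediate: throughout $\tilde\Omega$ we have $v(s,\tau)>0$ (since $s<Z(\tau)$ and $Z(\tau)$ is the smallest zero of $v(\cdot,\tau)$) and $\partial_s v(s,\tau)\leq 0$ (from the variational inequality \eqref{eq:VIv}), so with $(1-p)/p>0$ (in the standard power-utility setup $0<p<1$) we get $\partial_s v-\tfrac{1-p}{p}v\leq -\tfrac{1-p}{p}v<0$.

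For the second inequality \eqref{ie:pssv}, set $\phi:=\partial_{ss}v-\tfrac{2-p}{p}\partial_s v+\tfrac{1-p}{p^2}v$. On the interior of $\mathcal{G}$, $\partial_s v\equiv 0$ on an open set forces $\partial_{ss}v=0$ and hence $\phi=\tfrac{1-p}{p^2}v>0$. The main work is to show $\phi>0$ on $\mathcal{E}$ via a parabolic maximum principle. Since $\mathcal L_s$ has constant coefficients in $s$, differentiating the equation $\partial_\tau v-\mathcal L_s v=-F(s)$ (with $F(s):=e^{(1-p)s/p}\tilde f(s)$) twice in $s$ and recombining yields
\[
\partial_\tau\phi-\mathcal L_s\phi=-\bigl[F''(s)-\tfrac{2-p}{p}F'(s)+\tfrac{1-p}{p^2}F(s)\bigr].
\]
The key algebraic observation is that the roots of $r^2-\tfrac{2-p}{p}r+\tfrac{1-p}{p^2}=(r-1/p)(r-(1-p)/p)$ are exactly the exponents making up $F$ on the outer strips, so a piecewise calculation gives $0$ on $\{s>-p\ln\alpha\}\cup\{s<0\}$ and $-1/p$ on the middle strip $0<s<-p\ln\alpha$ where $F\equiv-p/(1-p)$. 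Hence $\partial_\tau\phi-\mathcal L_s\phi\geq 0$ in $\mathcal{E}$, with strict inequality on a nonempty subregion.

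Next I verify $\phi\geq 0$ on the parabolic boundary of $\mathcal{E}$. At $\tau=0$, $v(s,0)\equiv p/(1-p)$ gives $\phi(s,0)=1/p>0$. Along $s=S(\tau)$, the $C^{1,\alpha}$ regularity of $w=\partial_s v$ inherited from $w\in W^{2,1}_{q,loc}$ together with $w\equiv 0$ on the open set $\mathcal{G}_0$ force $\partial_{ss}v=0$ there, so $\phi|_{S(\tau)}=\tfrac{1-p}{p^2}v(S(\tau),\tau)>0$. Along $s=Z(\tau)$, smooth pasting gives $v=\partial_s v=0$; differentiating $v(Z(\tau),\tau)=0$ along the curve yields $\partial_\tau v=0$ on the boundary, and the PDE then forces $A\,\partial_{ss}v=F(Z(\tau))$ with $A=\mu^2/(2\sigma^2)$, which is $\geq 0$ because the variational inequality at $v=0$ requires $F(s)\geq 0$ (this is precisely the argument used in the proof of Proposition \ref{thm:mf}). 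Since the zero-order coefficient of $\mathcal L_s$ equals $\delta/p-\mu^2(1-p)/(2p^2\sigma^2)\geq 1>0$ under the standing assumption on $\delta$, the weak maximum principle gives $\phi\geq 0$ in $\overline{\mathcal{E}}$, and the parabolic strong minimum principle upgrades this to $\phi>0$: any interior zero would propagate backward in time to the initial data, contradicting $\phi(s,0)=1/p>0$.

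The main obstacle is twofold. First, one has to carry out the commutator computation piecewise and observe the algebraic miracle that the characteristic polynomial of the operator defining $\phi$ annihilates exactly the exponentials $e^{s/p}$ and $e^{(1-p)s/p}$ appearing in $F$ on the outer regions; this is what guarantees the crucial sign $\partial_\tau\phi-\mathcal L_s\phi\geq 0$, and it is easy to derail by a bookkeeping error on the middle strip. Second, one has to argue the free-boundary regularity carefully enough to read off the correct boundary values of $\phi$ on $s=S(\tau)$ (where the $C^{1,\alpha}$ theory for $w$ ensures $\partial_{ss}v$ vanishes) and on $s=Z(\tau)$ (where smooth pasting, the chain rule on the moving curve, and the PDE together force $\partial_{ss}v=F(Z(\tau))/A\geq 0$).
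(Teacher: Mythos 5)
Your proof is correct and follows exactly the route the paper itself indicates (the paper omits the argument, saying only that \eqref{ie:psv}--\eqref{ie:pssv} follow from the strong maximum principle by standard arguments): the first inequality from the sign constraints $\partial_s v\le 0$, $v>0$ on $\tilde\Omega$, and the second by applying the minimum principle to $\phi:=\partial_{ss}v-\tfrac{2-p}{p}\partial_s v+\tfrac{1-p}{p^2}v$, whose source term is nonnegative precisely because the characteristic roots $1/p$ and $(1-p)/p$ annihilate the outer pieces of $e^{\frac{1-p}{p}s}\tilde f(s)$. The one place you over-engineer is the boundary value on $s=Z(\tau)$: since $w=\partial_s v\le 0$ on $\tilde\Omega$ with $w(Z(\tau),\tau)=0$, the function $w(\cdot,\tau)$ attains its maximum at the right endpoint and hence $\partial_{ss}v(Z(\tau)^-,\tau)\ge 0$ directly, with no need for the chain rule along the curve $Z(\tau)$ (whose differentiability is not established) or for passing the PDE to the free boundary.
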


 In view of $S(\tau)\leq0$ and the definition of $\phi(\tau)$, $v(s,\tau)$ is continuous across the free boundary $Z(\tau)$. By the strong maximum principle, it is easy to show inequalities \eqref{ie:psv}-\eqref{ie:pssv} in Lemma \ref{thm:pssv} using standard arguments, and its proof is omitted.

Based on the relationship \eqref{eq:vtildeu}, it is straightforward to see that $\tilde u(s,\tau)=e^{\frac{p-1}{p}s}v(s,\tau)$ is a $W^{2,1}_{q,loc}(\Omega)\cap C(\overline \Omega)$-solution to problem \eqref{eq:VItildeu}. Hence, we have the following theorem.

\begin{proposition}\label{thm:tildeu}
For $(s,\tau)\in\tilde\Omega$, $\tilde u(s,\tau)=e^{\frac{p-1}{p}s}v(s,\tau)$ is the unique solution to problem \eqref{eq:VItildeu}, where $v(s,\tau)$ is the solution to problem \eqref{eq:VIv}.
In addition, $S(\tau)$ defined in \eqref{freeS} and $Z(\tau)$ defined in \eqref{eq:ztau} are free boundaries of problem \eqref{eq:VItildeu} such that
\begin{eqnarray}
&&\mathcal F=\{\tilde u=0\}=\{(s,\tau) ~ | ~ s\geq Z(\tau),\tau\in(0,T] \},\label{fb:tildeu0}\\
&& \mathcal G=\{(p-1)\tilde u-p\partial_s\tilde u=0,\tilde u>0\}=\{(s,\tau) ~ | ~ s\leq S(\tau)\},\label{fb:tildeu1}\\
&& \mathcal E=\{(p-1)\tilde u-p\partial_s\tilde u>0,\tilde u>0\}=\{(s,\tau) ~ | ~ S(\tau)<s<Z(\tau)\}.\label{fb:tildeu2}
\end{eqnarray}
Moreover, $\tilde u(s,\tau)\in W^{2,1}_{q,loc}(\Omega)\cap C(\overline \Omega)$ and $\tilde u(s,\tau)\in C^{2,1}(\tilde\Omega)\cap C(\overline{\tilde\Omega})$ that satisfies
\begin{eqnarray}
&&\partial_s\tilde u(s,\tau)<0,\quad (s,\tau)\in\tilde\Omega\label{ie:pstildeu}\\
&&\partial_{ss}\tilde u(s,\tau)-\partial_s\tilde u(s,\tau)>0, \quad (s,\tau)\in\tilde\Omega.\label{ie:psstildeu}
\end{eqnarray}
\end{proposition}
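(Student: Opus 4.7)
The plan is to verify Proposition \ref{thm:tildeu} by a direct substitution argument, using the transform \eqref{eq:vtildeu}, namely $\tilde u(s,\tau)=e^{\frac{p-1}{p}s}v(s,\tau)$, to transfer every property already established for $v$ in Proposition \ref{thm:v} and Lemma \ref{thm:pssv} to $\tilde u$. Since \eqref{eq:vtildeu} is a pointwise bijective transformation with a smooth and strictly positive multiplier $e^{\frac{p-1}{p}s}$, both existence and uniqueness for \eqref{eq:VItildeu} will be immediate consequences of the corresponding statements for \eqref{eq:VIv}, and the regularity classes $W^{2,1}_p(\Omega)\cap C(\overline\Omega)$ and $C^{2,1}(\tilde\Omega)\cap C(\overline{\tilde\Omega})$ are preserved under multiplication by this smooth factor.

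First I would compute the derivatives
\begin{align*}
\partial_\tau\tilde u &= e^{\frac{p-1}{p}s}\partial_\tau v,\\
\partial_s\tilde u &= e^{\frac{p-1}{p}s}\Big(\tfrac{p-1}{p}v+\partial_s v\Big),\\
\partial_{ss}\tilde u &= e^{\frac{p-1}{p}s}\Big(\tfrac{(p-1)^2}{p^2}v+2\tfrac{p-1}{p}\partial_s v+\partial_{ss}v\Big),
\end{align*}
and then match the three entries of the $\min$ in \eqref{eq:VItildeu} with those in \eqref{eq:VIv}. The quick identity $(p-1)\tilde u-p\partial_s\tilde u=-pe^{\frac{p-1}{p}s}\partial_s v$ handles the gradient-constraint term, and $\tilde u=e^{\frac{p-1}{p}s}v$ handles the function-constraint term. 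For the parabolic term, expanding $\partial_\tau\tilde u-\frac12\frac{\mu^2}{\sigma^2}\partial_{ss}\tilde u-(\delta-\frac12\frac{\mu^2}{\sigma^2})\partial_s\tilde u+\delta\tilde u$ with the formulas above, the coefficients of $v$, $\partial_s v$ and $\partial_{ss}v$ collapse exactly into $-\mathcal L_s v$ (this is a bookkeeping exercise with the constants $\frac{2-p}{2p}\frac{\mu^2}{\sigma^2}-\delta$ and $\frac{1}{p}\delta-\frac{\mu^2(1-p)}{2p^2\sigma^2}$ appearing in $\mathcal L_s$), so that the first entry equals $e^{\frac{p-1}{p}s}\big(\partial_\tau v-\mathcal L_s v+e^{\frac{1-p}{p}s}\tilde f(s)\big)$. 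Since the multiplier $e^{\frac{p-1}{p}s}$ and $p>0$ are positive, the three entries of the $\min$ in \eqref{eq:VItildeu} have the same signs as the three entries in \eqref{eq:VIv}, and the initial condition $\tilde u(s,0)=e^{\frac{p-1}{p}s}\cdot\frac{p}{1-p}$ matches the one required in \eqref{eq:VItildeu}.

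Next, the free boundary decompositions \eqref{fb:tildeu0}--\eqref{fb:tildeu2} follow directly from \eqref{eq:fv0}--\eqref{eq:fv2}: since $e^{\frac{p-1}{p}s}>0$, $\tilde u=0\Leftrightarrow v=0$, and since $(p-1)\tilde u-p\partial_s\tilde u=-pe^{\frac{p-1}{p}s}\partial_s v$ and $p>0$, $(p-1)\tilde u-p\partial_s\tilde u=0\Leftrightarrow \partial_s v=0$, with the strict inequalities likewise transferred. To prove \eqref{ie:pstildeu} I would substitute into $\partial_s\tilde u=e^{\frac{p-1}{p}s}(\partial_s v-\frac{1-p}{p}v\cdot(-1))=e^{\frac{p-1}{p}s}(\partial_s v+\frac{p-1}{p}v)$ and rewrite it as $e^{\frac{p-1}{p}s}(\partial_s v-\frac{1-p}{p}v)$, which is negative on $\tilde\Omega$ by \eqref{ie:psv}. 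For \eqref{ie:psstildeu}, a direct computation gives
\begin{equation*}
\partial_{ss}\tilde u-\partial_s\tilde u=e^{\frac{p-1}{p}s}\Big(\partial_{ss}v-\tfrac{2-p}{p}\partial_s v+\tfrac{1-p}{p^2}v\Big),
\end{equation*}
which is positive on $\tilde\Omega$ by \eqref{ie:pssv}.

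I expect no essential difficulty: the only nontrivial step is the algebraic verification that the first entry of the $\min$ in \eqref{eq:VItildeu} reduces precisely to $e^{\frac{p-1}{p}s}\big(\partial_\tau v-\mathcal L_s v+e^{\frac{1-p}{p}s}\tilde f(s)\big)$, i.e. that the coefficient of $v$ in the expanded expression equals $\frac{1}{p}\delta-\frac{\mu^2(1-p)}{2p^2\sigma^2}$ and that of $\partial_s v$ equals $\frac{2-p}{2p}\frac{\mu^2}{\sigma^2}-\delta$. Once this bookkeeping is done, existence, uniqueness, regularity, the free boundary identities \eqref{fb:tildeu0}--\eqref{fb:tildeu2} and the monotonicity/convexity-type estimates \eqref{ie:pstildeu}--\eqref{ie:psstildeu} all follow mechanically from the corresponding results for $v$.
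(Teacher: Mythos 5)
Your proposal is correct and follows essentially the same route as the paper: the paper's own proof simply asserts that existence, uniqueness, regularity, the free-boundary identities and the inequalities \eqref{ie:pstildeu}--\eqref{ie:psstildeu} all transfer from $v$ to $\tilde u$ via the transform \eqref{eq:vtildeu}, and you have carried out exactly that verification, just with the bookkeeping written out explicitly (there is a harmless sign typo in your intermediate rewriting of $\partial_s\tilde u$, but the final expression $e^{\frac{p-1}{p}s}\bigl(\partial_s v-\frac{1-p}{p}v\bigr)$ is right).
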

\begin{proof}
It follows from transform \eqref{eq:vtildeu} that $\tilde u(s,\tau)$ is the unique solution to \eqref{eq:VItildeu}.
The regularity of $\tilde u(s,\tau)$ can be deduced from the regularity of $v(s,\tau)$. 
The uniqueness of solution to problem \eqref{eq:VIv} leads to the uniqueness of solution to the problem \eqref{eq:VItildeu}. 
Using the transform \eqref{eq:vtildeu},
we can deduce \eqref{fb:tildeu0}-\eqref{fb:tildeu2} from \eqref{eq:fv0}-\eqref{eq:fv2}, and \eqref{ie:pstildeu}-\eqref{ie:psstildeu} from \eqref{ie:psv}-\eqref{ie:pssv}.
\end{proof}

\subsection{The solution to the dual variational inequality \eqref{eq:VIu}}

Using the transform $y=e^s, t=T-\tau, \hat u(y,t)=\tilde u(s,\tau)$, where $\tilde u(s,\tau)$ is the solution to problem \eqref{eq:VItildeu}, we first show that $\hat u(y,t)$ is the solution to problem \eqref{eq:VIhatu}.

\begin{proposition}\label{thm:u}
$\hat u(y,t)=\tilde u(s,\tau)$ is the unique solution to problem \eqref{eq:VIhatu},
In particular, let $ \mathbb Q=(0,y_0(t))\times(0,T]$ with $y_0(t)=e^{Z(T-t)}$, and
\begin{equation}\label{def:u}
u(y,t)=\hat u(y,t),\quad (y,t)\in\mathbb Q.
\end{equation}
Then $u(y,t)$ is the unique solution to problem \eqref{eq:VIu} and
$e^{S(T-t)}\in C[0,T]$ is the free boundary to problem \eqref{eq:VIu} such that
\begin{eqnarray}
 \mathcal G_1&=& \big\{(y,t)|(1-p)u+py\partial_y u=0\big\}\notag \\
&=& \big\{(y,t)|0<y\leq e^{S(T-t)},t\in[0,T)\big\}, \label{fb:u1} \\
 \mathcal E_1&=& \big\{(y,t)|(1-p)u+py\partial_y u<0\big\}\notag \\
&=& \big\{(y,t)|e^{S(T-t)}<y\leq e^{Z(T-t)},t\in[0,T)\big\}. \label{fb:u2}
\end{eqnarray}
Moreover, $u(y,t)\in C^{2,1}(\mathbb Q)\cap C(\overline{\mathbb Q})$ that satisfies
\begin{eqnarray}
&&\partial_{y} u(y,t)<0, \quad (y,t)\in\mathbb Q,\label{ie:pyu}\\
&&\partial_{yy} u(y,t)>0, \quad (y,t)\in\mathbb Q.\label{ie:pyyu}
\end{eqnarray}
\end{proposition}
\begin{proof}
First, it is obvious that $\hat u(y,t)$ is the solution to problem \eqref{eq:VIhatu}.
It follows from \eqref{fb:tildeu0}-\eqref{fb:tildeu2} that
\begin{eqnarray*}
&&\max\left\{\partial_t\hat u + \frac{1}{2}\frac{\mu^2}{\sigma^2}y^2\partial_{yy}\hat u + \delta y\partial_y\hat u - \delta\hat  u-f(y),(1-p)\hat u+py\partial_y\hat u\right\}=0,\quad (y,t)\in \mathbb Q, \\ [2mm] 
&&\hat u(e^{Z(T-t)},t)=\partial_y\hat u(e^{Z(T-t)},t)=0.
\end{eqnarray*}
Combining with the terminal condition on $t=T$, we deduce that $u(y,t)$ given in \eqref{def:u} is the unique solution to problem \eqref{eq:VIu} in $\mathbb Q$ with $y_0(t)=e^{Z(T-t)}$.

Note that
$$(p-1)\tilde u-p\partial_s\tilde u=(p-1)u-py\partial_y u,$$
we can derive \eqref{fb:u1}-\eqref{fb:u2} from \eqref{fb:tildeu1}-\eqref{fb:tildeu2}.

The regularity of $\tilde u(s,\tau)$ in Theorem \ref{thm:tildeu} yields the regularity of $u(y,t)$ directly. 
According to \eqref{ie:pstildeu}-\eqref{ie:psstildeu}, we have
\begin{eqnarray*}
&&\partial_{y}u=e^{-s}\partial_s\tilde u(s,\tau)<0,\quad  (y,t)\in\mathbb Q,\\
&&\partial_{yy}u=e^{-2s}(\partial_{ss}\tilde u(s,\tau)-\partial_s\tilde u(s,\tau))>0,\quad  (y,t)\in\mathbb Q,
\end{eqnarray*}
which imply \eqref{ie:pyu}-\eqref{ie:pyyu}.
\end{proof}

\section{Proof of Main Results}\label{sec:control}
\setcounter{equation}{0}

\subsection{The solution to the HJB variational inequality \eqref{eq:VIU}}
\begin{theorem}\label{thm:U}
There exists a  solution $U(\omega,t)$ to the problem \eqref{eq:VIU}. Moreover, $U(\omega,t)\in C^{2,1}(\mathcal Q)$ that satisfies
\begin{eqnarray}
&&\partial_{\omega} U(\omega,t)>0, \quad (\omega,t)\in\mathcal Q,\label{ie:monotonic}\\
&&\partial_{\omega\omega} U(\omega,t)<0, \quad (\omega,t)\in\mathcal Q. \label{ie:concave}\\
 &&0\leq U(\omega,t)\leq\frac{\omega^{1-p}}{1-p},\quad (\omega,t)\in\mathcal Q, \label{ie:Uesti}
\end{eqnarray}
Moreover, if $U_1,U_2$ are the solutions to problem \eqref{eq:VIU} satisfying \eqref{ie:Uesti} and
\begin{equation}\label{U1U2}
\lim\limits_{\omega\rightarrow0^+}\omega^{p-1}(U_1-U_2)=0,
\end{equation}
then $U_1=U_2$ in $\mathcal Q$.
\end{theorem}
\begin{proof}
Using the fact that $u(y,t)\in C^{2,1}(\mathbb Q)$ and \eqref{ie:pyyu}, we can deduce the existence of a continuous inverse function of $I(y,t)=-\partial_yu(y,t)=\omega>0$ that
$I^{-1}(\omega,t)\in C(\mathcal Q).$ It follows from \eqref{ie:pyu} and the boundary condition on $y_0(t)$ that we have $y_0(t)=I^{-1}(0,t)$. 
Set
$$
U(\omega,t):=u(I^{-1}(\omega,t),t)+\omega I^{-1}(\omega,t). 
$$
%
Then $U(\omega,t)$ is continuous. Moreover, we have
\begin{eqnarray*}
&&\partial_{\omega} U(\omega,t)=\partial_{y} u\partial_\omega(I^{-1}(\omega,t)) + I^{-1}(\omega,t) + \omega \partial_\omega(I^{-1}(\omega,t)) \\
&&\hspace{0.7in}=\partial_{y} u\frac{1}{\partial_y I}+I^{-1}+I\frac{1}{\partial_y I} \\
&&\hspace{0.7in}=\frac{\partial_{y} u}{-\partial_{yy} u}+y+\frac{-\partial_{y} u}{-\partial_{yy} u}=y=I^{-1}(\omega,t)\in C(\mathcal Q), \\
&&\partial_{\omega\omega} U(\omega,t)=-\frac{1}{\partial_{yy} u(y,t)}\in C(\mathcal Q), \\
&&\partial_t U(\omega,t) = \partial_t u(y,t) = \partial_t u(I^{-1}(\omega,t),t)\in C(\mathcal Q), \\
&&U(0,t)=u(y_0(t),t)=0, \\
&&U(\omega,T)=\frac{1}{1-p}\big(I^{-1}(\omega,T)\big)^{1-\frac{1}{p}}=\frac{1}{1-p}\omega^{1-p}. 
\end{eqnarray*}
Hence, $U(\omega,t)\in C^{2,1}(\mathcal Q)$ is a solution to problem  \eqref{eq:VIU} using the dual transform \eqref{eq:yomega}-\eqref{eq:Uu} and the fact that $u(y,t)$ is the unique solution of \eqref{eq:VIu}.
As $\partial_{\omega} U(\omega,t)=y>0$, we have \eqref{ie:monotonic}. In addition, \eqref{ie:pyyu} implies the desired result \eqref{ie:concave}. 

We then show the estimate \eqref{ie:Uesti}. By its definition, we know $V(x,z,t)\geq0$ and 
$$U(\omega,t)=\frac{1}{z^{1-p}}V(x,z,t)\geq0.$$ 
Next we will show the right hand side of \eqref{ie:Uesti}. In view of \eqref{ie:ptauv} and the fact
$$u(y,t)=y^{\frac{p-1}{p}}v(s,\tau)=y^{\frac{p-1}{p}}v(\ln y,T-t),$$
then $$\partial_t u(y,t)=-y^{\frac{p-1}{p}}\partial_\tau v(\ln y,T-t)\geq0.$$
Hence $$\partial_t U(\omega,t)  = \partial_t u(I^{-1}(\omega,t),t)\geq0,$$
together with the terminal condition $U(\omega,T)=\frac{\omega^{1-p}}{1-p}$, we obtain
$$U(\omega,t)\leq\frac{\omega^{1-p}}{1-p},\quad 0\leq t\leq T.$$

In what follows, we show the uniqueness of the solution to problem \eqref{eq:VIU} by the contradiction argument. 
Suppose $U_1, U_2$ are two distinct solutions to problem \eqref{eq:VIU} satisfying 
\eqref{U1U2}  that
 $\mathcal N:=\{U_1(\omega,t)>U_2(\omega,t)\}\neq\emptyset$, where
\begin{eqnarray*}
&&\mathcal N_1:=\{(\omega,t)\in\mathcal N | (1-p)U_1(\omega,t)-\omega\partial_\omega U_1< (1-p)U_2(\omega,t)-\omega\partial_\omega U_2\},\\
&&\mathcal N_2:=\{(\omega,t)\in\mathcal N | (1-p)U_1(\omega,t)-\omega\partial_\omega U_1\geq (1-p)U_2(\omega,t)-\omega\partial_\omega U_2\}.
\end{eqnarray*}
It follows from the definition of $\mathcal N_1$ that
\begin{eqnarray*}
&& \partial_{t}U_1-\frac 1 2\frac{\mu^2}{\sigma^2}\frac{( \partial_{\omega}U_1)^2}{ \partial_{\omega\omega} U_1}+\frac{(\hat{c}^*(\omega,t))^{1-p}}{1-p}-{\hat c}^*(\omega,t) \partial_\omega U_1-\delta U_1=0,\\
&& \partial_{t}U_2-\frac 1 2\frac{\mu^2}{\sigma^2}\frac{( \partial_{\omega}U_2)^2}{ \partial_{\omega\omega} U_2}+\frac{(\hat{c}^*(\omega,t))^{1-p}}{1-p}-{\hat c}^*(\omega,t) \partial_\omega U_2-\delta U_2\leq 0.
\end{eqnarray*}
Denote $c^*(\omega,t)=g(\partial_\omega U)$ and $\tilde u=U_1-U_2$. We have that
\begin{eqnarray*}
\frac{(\partial_\omega U_1)^2}{\partial_{\omega\omega} U_1}-\frac{(\partial_\omega U_2)^2}{\partial_{\omega\omega} U_2} 
&=&\frac{(\partial_\omega U_1)^2}{\partial_{\omega\omega} U_1}-\frac{(\partial_\omega U_1)^2}{\partial_{\omega\omega} U_2}+\frac{(\partial_\omega U_1)^2}{\partial_{\omega\omega} U_2}-\frac{(\partial_\omega U_2)^2}{\partial_{\omega\omega} U_2}\\
&=&-\frac{(\partial_\omega U_1)^2}{\partial_{\omega\omega} U_1\partial_{\omega\omega} U_2}\partial_{\omega\omega}\tilde u+\frac{\partial_{\omega} U_1+\partial_\omega U_2}{\partial_{\omega\omega} U_2}\partial_{\omega}\tilde u,
\end{eqnarray*}
then it holds that
\begin{eqnarray}\label{eq:11}
 &&\partial_{t}\tilde u +\frac{(\partial_\omega U_1)^2}{\partial_{\omega\omega} U_1\partial_{\omega\omega} U_2} \partial_{\omega\omega} \tilde u+\frac{\partial_{\omega} U_1+\partial_\omega U_2}{\partial_{\omega\omega} U_2}\partial_{\omega}\tilde u -\delta \tilde u\nonumber\\
& & \geq g(\partial_\omega U_1)\partial_\omega U_1-g(\partial_\omega U_2)\partial_\omega U_2-\frac{[g(\partial_\omega U_1)]^{1-p}}{1-p}+\frac{[g(\partial_\omega U_2)]^{1-p}}{1-p}.
\end{eqnarray}
We next show the right hand side of \eqref{eq:11} is non-negative. Denote 
$$G(x):=g(x)x-\frac{[g(x)]^{1-p}}{1-p}.$$
It holds in $\mathcal N_1$ that
$$
0<(1-p)\tilde u<\omega\partial_\omega\tilde u,\quad(\omega,t)\in\mathcal N_1. 
$$
Hence, we obtain $\partial_\omega U_1-\partial_\omega U_2=\partial_\omega \tilde u>0$. In view that
\begin{eqnarray*}
&&g(\partial_\omega U)=c^*(\omega,t)=\left\{\begin{array}{ll}
1,\quad \partial_\omega U<1,\\
(\partial_\omega U)^{-1/p},\quad 1<\partial_\omega U<\alpha^{-p},\\
\alpha,\quad \partial_\omega U>\alpha^{-p},
\end{array}
\right.\\
&&[g(x)]^{-p}=\left\{\begin{array}{ll}
1,\quad x<1,\\
x,\quad 1<x<\alpha^{-p},\\
\alpha^{-p},\quad x>\alpha^{-p},
\end{array}
\right.\\
&&g'(x)=\left\{
\begin{array}{ll}
0,\\
-\frac1p x^{-\frac1p-1},\\
0,
\end{array}
\right.
\end{eqnarray*}
and the facts that $g'(x)\leq0$ and
$$
G'(x)=g'(x)x+g(x)-[g(x)]^{-p}g'(x)=g'(x)[x-[g(x)]^{-p}]+g(x)=g(x)\geq0,
$$
we deduce that $G(x)$ is increasing in $x$. It then follows that $G(\partial_\omega U_1)\geq G(\partial_\omega U_2)$, and the right hand side of \eqref{eq:11} is nonnegative. Therefore, $\tilde u$ satisfies the following linear equation
$$
\partial_t \tilde u-\mathcal L^*\tilde u\geq0,\quad (\omega,t)\in\mathcal N_1, 
$$ 
where the coefficients in the operator $\mathcal L^*$ are all determined. To apply the maximum principle, we need the boundary conditions on the parabolic boundary $\partial_p\mathcal N_1$. By the definition of $\mathcal N_1$, we have
$$(1-p)\tilde u-\omega\partial_\omega\tilde u=0,\quad (\omega,t)\in\partial_p\mathcal N_1.$$

\begin{center}
\begin{picture}(200,100)(0,0)
\put(10,10){\vector(1,0){180}} \put(20,0){\vector(0,1){105}}
\qbezier[80](20, 42)(94, 42)(104, 42)
\put(12,95){$t$}
\put(180,2){\footnotesize $\omega$}
\put(90,47){\footnotesize ${B}$}
\put(10,47){\footnotesize ${A}$}
\put(105,47){\footnotesize ${C}$}
\put(180,40){\footnotesize ${\partial_p\mathcal N}$}
\put(140,75){\footnotesize ${\partial_p\mathcal N_1}$}
\put(100,60){\footnotesize ${\mathcal N_1}$}
\put(145,50){\footnotesize ${\mathcal N_2}$}
\put(18, 40){$\bullet$}
\put(105, 40){$\bullet$}
\put(90, 40){$\bullet$}
\qbezier(20,30)(80,22)(182,40)
\qbezier(50,60)(70,42)(102,42)
\qbezier(100,42)(124,40)(150,72)
\end{picture} \bigskip
\ \\
\text{Figure 4}
 \label{fig1}\end{center}
 
Define $t_0=\inf\{t:(\omega,t)\in\mathcal N_1\}$. Denote $B(\omega_0,t_0), C(\omega_1,t_0)\in\partial_p\mathcal N_1$, where $\omega_0=\inf\{\omega:(\omega,t_0)\in\partial_p\mathcal N_1\},\;\omega_1\geq\omega_0$ and $\overline{BC}\subset\partial_p\mathcal N_1$, as illustrated in the above figure. We claim that $\tilde u(B)=0$. If it is not true, then we consider $A(0,t_0)$. So we have
$$
(1-p)\tilde u-\omega \partial_\omega\tilde  u\geq0,\quad(\omega,t)\in\overline{AB}.
$$
It follows that
$$\partial_\omega(\omega^{p-1}\tilde u)\leq0,\quad(\omega,t)\in\overline{AB}.$$
By the condition \eqref{U1U2}, we have  $\omega^{p-1}\tilde u(B)\leq 0$. Together with $\omega^{p-1}\tilde u(B)\geq 0$, we obtain $\tilde u(B)=0$, and hence
$$\tilde u(\omega,t)=0,\quad(\omega,t)\in\overline{BC}. $$
By the maximum principle, we know
$$U_1-U_2=\tilde u\leq0, \quad (\omega,t)\in \mathcal N_1,$$
which leads to a contradiction with the definition of $\mathcal N_1$. Hence, $\mathcal N_1=\emptyset$, and $\mathcal N=\mathcal N_2$.

In view that
\begin{eqnarray*}
\left\{\begin{array}{ll}
\partial_\omega(\omega^{p-1}\tilde u)\geq0,\quad(\omega,t)\in\partial_p\mathcal N,\\
\omega^{p-1}\tilde u=0,\quad(\omega,t)\in\partial_p\mathcal N,
\end{array}\right.
\end{eqnarray*}
we have  $\omega^{p-2}\tilde u=0$, for $(\omega,t)\in\mathcal N$, which contradicts the definition of $\mathcal N$. The uniqueness of the solution to problem \eqref{eq:VIU} then follows.
\end{proof}


\subsection{Proof of Theorem \ref{thm:V}}

First, the next result gives the analytical characterization of free boundaries $\omega^*(t),\omega_1(t)$ and $\omega_\alpha(t)$ in Theorem \ref{thm:V}.

\begin{theorem}\label{thm:Ufb}
Let $w(s,\tau)$ be the solution to problem \eqref{eq:VIw},  and $S(\tau)$ be defined in \eqref{freeS}. There exist three free boundaries $\omega^*(t),\omega_1(t)$ and $\omega_\alpha(t)$ to problem \eqref{eq:VIU} such that
\begin{eqnarray}
&&\omega_\alpha(t)<\omega_1(t)\leq\omega^*(t),\quad t\in[0,T],\label{eq:alpha1*} \\
 &&JR = \big\{(\omega,t)\in\overline{\mathcal Q} ~ | ~ (1-p)U-\omega\partial_\omega U=0\big\} \notag \\
&&\hspace{0.2in} = \big\{(\omega,t)\in\overline{\mathcal Q} ~ | ~ \omega\geq \omega^*(t),t\in[0,T)\big\}, \label{fb:U1} \\
 && CR = \big\{(\omega,t)\in\overline{\mathcal Q} ~ | ~ (1-p)U-\omega\partial_\omega U<0\big\} \notag \\
&&\hspace{0.2in} = \big\{(\omega,t)\in\overline{\mathcal Q} ~ | ~ \omega<\omega^*(t),t\in[0,T)\big\}.\label{fb:U2} 
\end{eqnarray}
 Moreover, for $t\in[0,T)$, we have the analytical form in terms of $w(s,\tau)$  that
\begin{eqnarray}
&&\omega^*(t)=\frac{1-p}{p}e^{-\frac1pS(T-t)}\int_{Z(T-t)}^{S(T-t)}w(\xi,T-t)d\xi, \label{solvo*} \\ 
&&\omega_1(t)=\frac{1-p}{p}\int_{Z(T-t)}^{0}w(\xi,T-t)d\xi 
-w(0,T-t), \label{solvo1}\\
&&\omega_\alpha(t)=\alpha\frac{1-p}{p}\int_{Z(T-t)}^{-p\ln\alpha}w(\xi,T-t)d\xi 
-\alpha w(-p\ln\alpha,T-t). \label{solvoa}
\end{eqnarray}
Here,  $\omega^*(t), \omega_1(t), \omega_\alpha(t)$ are continuous in $t$. In particular, our conjectured free boundary $\omega^*(t)$ in Remark \ref{rmk:omega} is now characterized analytically by \eqref{solvo*}.

In addition, the candidate optimal feedback control ${\hat c}^*(\omega,t)$ given in \eqref{eq:defc} satisfies that
\begin{eqnarray}
&&{\hat c}^*(\omega,t)=\left\{\begin{array}{ll}
\alpha, & \mbox{if }  0<\omega\leq \omega_\alpha(t), \\
(\partial_\omega U)^{-\frac{1}{p}}, & \mbox{if } \omega_\alpha(t)<\omega< \omega_1(t), \\
1, & \mbox{if } \omega_1(t)\leq\omega\leq \omega^*(t). \\ 
\end{array}\right.\label{eq:solc}
\end{eqnarray}

\end{theorem}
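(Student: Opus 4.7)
The overall strategy is to translate the structure of the dual solution $u(y,t)$ (Theorem \ref{thm:u}) back to the primal variable $\omega$ via the Legendre relation $\omega = I(y,t) = -\partial_y u(y,t)$ that underlies Theorem \ref{thm:U}. From $U = u + \omega y$ and $\partial_\omega U = y$ one obtains the pointwise identity
\[
(1-p)U(\omega,t) - \omega\,\partial_\omega U(\omega,t) = (1-p)u(y,t) + p\,y\,\partial_y u(y,t),
\]
so the jump/continuation split in \eqref{fb:U1}--\eqref{fb:U2} is the primal image of the dual split $\mathcal G_1/\mathcal E_1$ from Theorem \ref{thm:u}. Since $I(\cdot,t)$ is strictly decreasing by \eqref{ie:pyyu}, the dual boundary $y = e^{S(T-t)}$ maps to $\omega^*(t) := I(e^{S(T-t)}, t)$, and $\omega \geq \omega^*(t)$ corresponds to $y \leq e^{S(T-t)}$.

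To obtain \eqref{solvo*}--\eqref{solvoa}, I would unfold the chain $u(y,t) = \tilde u(s,\tau) = e^{(p-1)s/p} v(s,\tau)$ with $s = \ln y$, $\tau = T - t$, which gives
\[
I(y,t) = e^{-s/p}\!\left[\tfrac{1-p}{p}\,v(s,\tau) - \partial_s v(s,\tau)\right].
\]
On $\{s \leq Z(\tau)\}$, Proposition \ref{thm:v} supplies $v(s,\tau) = \phi(\tau) + \int_{S(\tau)}^{s} w(\xi,\tau)\,d\xi$ and hence $\partial_s v = w$. Evaluating at $s = S(T-t)$ (where $v = \phi$ and $\partial_s v = 0$) yields \eqref{solvo*}; at $s = 0$ it yields \eqref{solvo1}; and at $s = -p\ln\alpha$ it yields \eqref{solvoa}. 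Both latter points lie in $(S(\tau), Z(\tau))$, because $S(\tau)\leq 0 < -p\ln\alpha < Z(\tau)$ by Lemma \ref{thm:Sb} and Proposition \ref{thm:mf}, so the integral representation is legitimate. The ordering \eqref{eq:alpha1*} then follows from strict $y$-monotonicity of $I(\cdot,t)$ applied to $e^{S(T-t)} \leq 1 < \alpha^{-p}$, with the first inequality strict for $t<T$.

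The piecewise feedback \eqref{eq:solc} is obtained by inserting $\partial_\omega U = y$ into \eqref{eq:defc}: the unconstrained value $y^{-1/p}$ equals $1$ at $y=1$ (i.e.\ $\omega = \omega_1(t)$) and equals $\alpha$ at $y = \alpha^{-p}$ (i.e.\ $\omega = \omega_\alpha(t)$), so the $\max/\min$ structure partitions $(0, \omega^*(t)]$ into the three advertised pieces under $\omega = I(y,t)$. Continuity of $\omega^*, \omega_1, \omega_\alpha$ in $t$ follows by assembling continuity of $S$ (Lemma \ref{thm:Sb}), continuity of $w$ on $\overline{\tilde\Omega}$ (Lemma \ref{thm:w}), and smoothness of $\phi$ from the linear ODE \eqref{eq:phi}. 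The most delicate point, which I expect to be the main obstacle, is the strict monotonicity of $\omega^*(t)$ in $t$: writing $\omega^*(t) = \tfrac{1-p}{p}\,e^{-S(T-t)/p}\phi(T-t)$, it reduces to showing $\psi'(\tau) > 0$ for $\psi(\tau) := e^{-S(\tau)/p}\phi(\tau)$, which requires balancing the positive contribution $-S'(\tau)/p > 0$ (from Proposition \ref{thm:mf}) against the sign of $\phi'(\tau)$ supplied by \eqref{eq:phi}, exploiting $S(\tau)\leq 0$ and the assumption $\delta \geq \tfrac{\mu^2(1-p)}{2p\sigma^2} + p$.
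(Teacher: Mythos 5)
Your proposal follows essentially the same route as the paper: the Legendre identity $(1-p)U-\omega\partial_\omega U=(1-p)u+py\partial_y u$, the identification $\omega^*(t)=I(e^{S(T-t)},t)$ via strict monotonicity of $I(\cdot,t)$, and the evaluation of $I=e^{-s/p}\bigl[\tfrac{1-p}{p}v-\partial_s v\bigr]$ at $s=S(T-t),\,0,\,-p\ln\alpha$ using $\partial_s v=w$ and $w(S(\tau),\tau)=0$ are exactly the paper's computations, and your derivations of \eqref{eq:alpha1*}, \eqref{eq:solc} and the continuity claims match (the paper phrases the ordering via strict concavity of $U$ rather than strict monotonicity of $I$, which is the same thing). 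The one step you leave open --- the strict decrease of $\omega^*(t)$ --- is closed in the paper with no balancing act: it differentiates \eqref{solvo*} to get $\tfrac{d}{dt}\omega^*(t)=\tfrac{1-p}{p}e^{-S(T-t)/p}\bigl[\tfrac1p S'(T-t)\phi(T-t)-\phi'(T-t)\bigr]$ and asserts that both $S'<0$ (with $\phi>0$) and $\phi'>0$, so the bracket is negative term by term; the only substantive input beyond what you already cite is the claimed positivity of $\phi'$ from the ODE \eqref{eq:phi}, which you would still need to justify (e.g.\ from $S(\tau)\le 0$ forcing the right-hand side of \eqref{eq:phi} to be positive, together with the sign of the zero-order coefficient) to make your argument complete.
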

\begin{proof}
First, by \eqref{eq:defu}-\eqref{eq:uy}, we have
\begin{eqnarray*}
(1-p)u(y,t)+py\partial_y u(y,t)
=(1-p)U(\omega,t)-\omega\partial_\omega U(\omega,t).
\end{eqnarray*}
According to \eqref{ie:pyyu}, $I(y,t)=-\partial_y u(y,t)$ is strictly decreasing in $y$, it follows that
\begin{eqnarray*}
& & \mathcal{G}_1 = \big\{(y,t)\in\mathbb Q ~ | ~ I(y,t)\geq I(e^{S(T-t)},t), t\in[0,T)\big\}, \\
& & \mathcal{E}_1 = \big\{(y,t)\in\mathbb Q ~ | ~ I(y,t)< I(e^{S(T-t)},t), t\in[0,T)\big\}.
\end{eqnarray*}
The above results, together with \eqref{fb:u1}-\eqref{fb:u2}, imply the existence of a free boundary $\omega^*(t)=I(e^{S(T-t)},t)=-\partial_y u(e^{S(T-t)},t)$ such that \eqref{fb:U1}-\eqref{fb:U2} hold true. 

Using the transform $u(y,t)=\tilde u(s,\tau)=e^{-\frac{1-p}{p}s}v(s,\tau)$, $y=e^s$ and $\tau=T-t$, we have that
\begin{eqnarray*}
\omega^*(t)&=&-\partial_yu(e^{S(T-t)},t)=-e^{-S(T-t)}\partial_s\tilde u(S(T-t),T-t)\\
&=&e^{-\frac1pS(T-t)}\left[\frac{1-p}{p}v(S(T-t),T-t)-\partial_sv(S(T-t),T-t)\right]\\
&=&\frac{1-p}{p}e^{-\frac1pS(T-t)}\int_{Z(T-t)}^{S(T-t)}w(\xi,T-t)d\xi,
\end{eqnarray*}
where the last equality is due to the expression of $v(s,\tau)$ in \eqref{eq:solv} and the fact that $\partial_sv(S(T-t),T-t)=0$.

Next, we show \eqref{eq:solc}. In the domain $CR$, we have
$$\partial_tU-\frac 1 2\frac{\mu^2}{\sigma^2}\frac{(\partial_{\omega}U)^2}{\partial_{\omega\omega}U}+\frac{\hat c^{1-p}(\omega,t)}{1-p}-\hat c(\omega,t)\partial_\omega U-\delta U=0,$$
where $\hat c(\omega,t)$ is given by \eqref{eq:defc}. Then, let us consider two free boundaries $\omega_1(t)$ and $\omega_\alpha(t)$ in problem \eqref{eq:VIU} satisfying that 
\begin{eqnarray}\label{o1oa}
\left\{\begin{array}{ll}
\partial_\omega U(\omega_1(t),t)=1,\\
\partial_\omega U(\omega_\alpha(t),t)=\alpha.
\end{array}\right.
\end{eqnarray}
By the strict concavity \eqref{ie:concave} of $U(\omega,t)$, we get the desired result \eqref{eq:alpha1*}. 
Moreover, the strict concavity of $U(\omega,t)$, together with definition \eqref{eq:defc} of $\hat c(\omega,t)$, implies the expression \eqref{eq:solc}.

Next, we continue to show \eqref{solvo1} and \eqref{solvoa}.
In view of \eqref{eq:omegay} and \eqref{eq:uy}, for any fixed $t\in[0,T)$,  the first equation of \eqref{o1oa} implies that $\omega_1(t)$ satisfies
\begin{eqnarray*}
\omega_1(t)&=&(\partial_\omega U)^{-1}(1,t)=-\partial_yu(1,t)=-\partial_s\tilde u(0,T-t) \\
&=&\frac{1-p}{p}v(0,T-t)-\partial_sv(0,T-t) \\
&=&\frac{1-p}{p}\int_{Z(T-t)}^0w(\xi,\tau)d\xi-w(0,T-t).
\end{eqnarray*}
Similarly, it follows from the second equation of \eqref{o1oa} that $\omega_\alpha(t)$ satisfies
\begin{eqnarray*}
\omega_\alpha(t)&=&(\partial_\omega U)^{-1}(\alpha^{-p},t)=-\partial_yu(\alpha^{-p},t)=-\alpha^{p}\partial_s\tilde u(-p\ln\alpha,T-t) \\
&=&\alpha\left[\frac{1-p}{p}v(-p\ln\alpha,T-t)-\partial_sv(-p\ln\alpha,T-t)\right] \\
&=&\alpha\frac{1-p}{p}\int_{Z(T-t)}^{-p\ln\alpha}w(\xi,T-t)d\xi-\alpha w(-p\ln\alpha,T-t).
\end{eqnarray*}

The continuity of $\omega^*(t), \omega_1(t), \omega_\alpha(t)$ can be deduced from the expressions in \eqref{solvo*}-\eqref{solvoa}, the continuity of $w(s,\tau)$ and $S(\tau)$ with respect to $\tau$.

%
%
\end{proof}

\begin{corollary}\label{thm:o*alpha}
The free boundaries $\omega^*(t)$ and $\omega_\alpha(t)$ given in \eqref{solvo*} and \eqref{solvoa}, respectively, increase in $\alpha$.
\end{corollary}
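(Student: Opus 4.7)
The plan is to derive both monotonicities from the explicit formulas in Theorem \ref{thm:Ufb}, combined with the monotonicity of $S(\tau)$ in $\alpha$ established in Lemma \ref{thm:fbde} and the comparison principle for the auxiliary problem \eqref{eq:VIw}. The central observation is that $\omega^{*}(t)$ and $\omega_\alpha(t)$ both depend on $\alpha$ only through the path $(S(\tau))$, the function $w(\cdot,\tau)$, and the ODE \eqref{eq:phi} for $\phi$, all of which have been analyzed with explicit monotonicity in $\alpha$.

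For $\omega^{*}(t)$, the task is delicate because the two factors in $\omega^{*}(t)=\tfrac{1-p}{p}e^{-S(\tau)/p}\phi(\tau)$ (with $\tau=T-t$) move in opposite directions: since $S(\tau)$ is decreasing in $\alpha$, the factor $e^{-S(\tau)/p}$ is \emph{increasing} in $\alpha$; on the other hand, because $F(S):=-e^{S/p}+\tfrac{1}{1-p}e^{(1-p)S/p}$ is increasing on $(-\infty,0]$, the ODE $\phi'+K\phi=F(S)$ together with comparison of linear ODEs shows that $\phi(\tau)$ is \emph{decreasing} in $\alpha$. To unmask the net sign, I would differentiate the product formula and use \eqref{eq:phi} to obtain a first-order linear ODE for $\omega^{*}(\tau)$ itself,
\begin{equation*}
\frac{d\omega^{*}}{d\tau}+\Bigl(K+\frac{S'(\tau)}{p}\Bigr)\omega^{*}(\tau)=\frac{1}{p}\bigl(e^{-S(\tau)}-(1-p)\bigr),\qquad \omega^{*}(0)=1,
\end{equation*}
and solve it with integrating factor $e^{K\tau+S(\tau)/p}$, which gives the clean representation
\begin{equation*}
\omega^{*}(\tau)\, e^{K\tau+S(\tau)/p}=1+\frac{1}{p}\int_{0}^{\tau}e^{Ks}\bigl[e^{(1-p)S(s)/p}-(1-p)e^{S(s)/p}\bigr]ds.
\end{equation*}
From this, $\omega^{*}$ depends on $\alpha$ solely through the path $S(\cdot,\alpha)$, so I would then compare the solutions corresponding to $\alpha_{1}<\alpha_{2}$ using the pointwise order $S_{\alpha_{1}}\geq S_{\alpha_{2}}$ together with a Gr\"onwall/integral-comparison argument applied to the equivalent equation for $Q(\tau):=\omega^{*}(\tau)e^{S(\tau)/p}$, which satisfies $Q'+KQ=h(S)/p$ with $h(S):=e^{(1-p)S/p}-(1-p)e^{S/p}$.

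For $\omega_{\alpha}(t)$, the strategy is analogous: the formula \eqref{solvoa} expresses $\omega_\alpha(t)$ in terms of $\phi(\tau)$, $S(\tau)$, and $w(\xi,\tau)$ evaluated at the $\alpha$-dependent point $\xi=-p\ln\alpha$. One combines Lemma \ref{thm:fbde}, the monotonicity of $\phi$ in $\alpha$, and an extension of the comparison argument in Lemma \ref{thm:fbde}: for $\alpha_{1}<\alpha_{2}$, one checks that $g_{1}(s)\geq g_{2}(s)$ in the relevant region, applies the comparison principle to \eqref{eq:VIw} (adjusted for the $\alpha$-dependent boundary $Z(\tau)$), and tracks the resulting bounds through the integral expression \eqref{solvoa}.

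The main obstacle is the $\omega^{*}(t)$ case, because the naive pointwise comparison of the two factors fails; one must exploit the ODE structure and the fact that $\omega^{*}$ depends on $\alpha$ only through the path $S(\cdot,\alpha)$, and then combine the monotonicities of $S$, $\phi$, and the forcing $e^{-S}-(1-p)$ in a single Gr\"onwall-type estimate. The $\omega_{\alpha}$ case is technically easier but requires careful bookkeeping of the dependence of $w$ on $\alpha$ through both the forcing $g$ and the boundary $Z(\tau)$.
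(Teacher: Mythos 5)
Your reduction of $\omega^*$ to the representation $\omega^{*}(\tau)\,e^{K\tau+S(\tau)/p}=1+\frac{1}{p}\int_{0}^{\tau}e^{Ks}h(S(s))\,ds$ with $h(S)=e^{(1-p)S/p}-(1-p)e^{S/p}$ is correct, and your observation that the two factors in \eqref{solvo*} move in opposite directions is genuine: since $F$ is increasing on $(-\infty,0]$ and $S$ is decreasing in $\alpha$ by Lemma \ref{thm:fbde}, the forcing in \eqref{eq:phi} decreases in $\alpha$ and hence $\phi$ is \emph{decreasing} in $\alpha$. (The paper's own proof asserts $\partial_\alpha\phi>0$ from the same data; with $e^{S}-1\le0$ and $\partial_\alpha S\le0$ the integrand there has the opposite sign, so the paper's factor-by-factor argument does not go through as written and your concern is well founded.) The problem is that you never close the resulting two-sided comparison. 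Writing $A_i=e^{-S_i(\tau)/p}$ and $B_i=1+\frac1p\int_0^\tau e^{Ks}h(S_i(s))\,ds$ for $\alpha_1<\alpha_2$, you have $A_2\ge A_1$ but $B_2\le B_1$ (since $h$ is increasing and positive on $S\le0$), so the desired conclusion $A_2B_2\ge A_1B_1$ is equivalent to
\begin{equation*}
e^{(S_1(\tau)-S_2(\tau))/p}\ \ge\ \frac{1+\frac1p\int_0^\tau e^{Ks}h(S_1(s))\,ds}{1+\frac1p\int_0^\tau e^{Ks}h(S_2(s))\,ds},
\end{equation*}
a quantitative bound relating the boundary gap at the single time $\tau$ to the accumulated gap of $h(S_1)-h(S_2)$ over all of $[0,\tau]$. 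This does not follow from the pointwise order $S_1\ge S_2$ alone: if the gap were large on an initial interval and nearly closed at time $\tau$, the right side would exceed the left. A Gr\"onwall argument on $D=\omega^*_2-\omega^*_1$ fares no better, because the ODE for $D$ carries the term $-\frac1p(S_2'-S_1')\omega_1^*$, whose sign is not controlled by the ordering of $S_1,S_2$. So the ``single Gr\"onwall-type estimate'' you announce is precisely the missing step, and it is not clear it exists in the form described; you would need quantitative control on how $S(\cdot;\alpha)$ deforms with $\alpha$, or a different argument (e.g.\ a direct comparison of the dual solutions $\hat u(\cdot,\cdot;\alpha)$ evaluated along the moving point $y^*(t)=e^{S(T-t)}$).

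The $\omega_\alpha$ part has the same character. The paper's proof differentiates only through the evaluation point, $\partial_\alpha\omega_\alpha=-\partial_{yy}u(\alpha^{-p},t)\cdot(-p\alpha^{-p-1})>0$, using convexity of $u$ and leaving the implicit dependence of $u$ on $\alpha$ unaddressed; your plan to ``track the resulting bounds through \eqref{solvoa}'' would require controlling $\partial_\alpha w$ at the $\alpha$-dependent point $-p\ln\alpha$ together with the $\alpha$-dependence of $Z(\tau)$, and no mechanism for doing so is supplied. As written, neither monotonicity claim is actually proved by your proposal; the value of the write-up is that it correctly isolates where the difficulty lies, but that difficulty is left unresolved.
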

\begin{proof}
%
%
%
%

Since $\omega^*(t)=-\partial_yu(e^{S(T-t)},t)$,  by \eqref{ie:pyyu} and Lemma \ref{thm:fbde}
it then follows  that
\begin{eqnarray*}
\frac{\partial\omega^*(t)}{\partial\alpha}&=&-\partial_{yy}u(e^{S(T-t)},t)e^{S(T-t)}\frac{\partial S(T-t)}{\partial\alpha}>0. 
\end{eqnarray*}
Hence, $\omega^*(t)$ increases in $\alpha$.

As $\omega_\alpha(t)=-\partial_yu(\alpha^{-p},t)$, we deduce that
\begin{eqnarray*}
\frac{\partial\omega_\alpha(t)}{\partial\alpha}&=&-\partial_{yy}u(\alpha^{-p},t)(-p\alpha^{-p-1})=p\alpha^{-p-1}\partial_{yy}u>0. 
\end{eqnarray*}
Hence, $\omega_\alpha(t)$ increases in $\alpha$.
\end{proof}

Building upon that $U(\omega,t)$ is strictly concave and strictly increases in $\omega$ in Theorem \ref{thm:U}, we can see that problem \eqref{eq:VIU} is equivalent to problem \eqref{eq:HJBU}. 
Let $V(x,z,t)=z^{1-p}U(\frac{x}{z},t)$, we can then show that $V(x,z,t)$ is the solution to problem \eqref{HJBV}. 


\begin{proof}[Proof of Theorem \ref{thm:V}]
By Theorem \ref{thm:U} and Theorem  \ref{thm:Ufb}, it is straightforward to check that $V(x,z,t)$ is the unique solution in $C^{2,1}(Q)\cap C(\bar Q)$ to problem  \eqref{HJBV} and also \eqref{eq:pxxV} holds. In view of \eqref{fb:U1}-\eqref{fb:U2}, if $x\geq \omega^*(t)z$, then
$$\partial_zV(x,z,t)=z^{-p}[(1-p)U(\omega,t)-\omega\partial_{\omega}U(\omega,t)]=0,$$
for $\omega = \frac{x}{z}\geq \omega^*(t)$; if $x<\omega^*(t)z$, then
$$\partial_zV(x,z,t)=z^{-p}[(1-p)U(\omega,t)-\omega\partial_{\omega}U(\omega,t)]<0,$$
for $\omega = \frac{x}{z} <\omega^*(t)$. 
Hence, we obtain results \eqref{fb:V1}-\eqref{fb:V2}. 

To prove the optimality of feedback control \eqref{solpi*} and \eqref{solc*}, it suffices to show that the following two conditions hold for all $x,z\in \mathbb{R}^+$ and $t\leq T$:
\begin{itemize}
    \item[(i)] The following SDE admits a unique strong solution $(X^*_s)_{s\geq t}$ that
    \begin{equation}\label{sde*}
    \left\{
        \begin{array}{ll}
            dX^*_s= &  \left(\mu \pi^*\left(X^*_s,\frac{M^*_s}{\omega^*(s)},s\right)-c^*\left(X^*_s,\frac{M^*_s}{\omega^*(s)},s\right)\right)ds+\sigma \pi^*\left(X^*_s,\frac{M^*_s}{\omega^*(s)},s\right)dW_s;\\
            M^*_s= & \max\left\{\omega^*(t)z,\sup_{t\leq u\leq s}X^*_u\right\};\\
            X^*_t=&x.
        \end{array}
        \right.
    \end{equation}
    Furthermore, the feedback controls $(\pi^*_t,c^*_t):=(\pi^*(X^*_t,z^*_t,t), c^*(X^*_t,z^*_t,t))$ are admissible.
    \item[(ii)]  It holds that
    \begin{eqnarray}
    V(x,z,t)
    \hspace{-0.3cm}&=&\hspace{-0.3cm}
    \mathbb{E}\left[\int_t^{T\wedge\tau}e^{-\delta(s-t)}\frac{(c^*_s)^{1-p}}{1-p}ds+e^{-\delta(T\wedge \tau-t)}\frac{(X^*_{T\wedge\tau})^{1-p}}{1-p}\bigg|X^*_t=x,z^*_t=z\right]
    \nonumber\\
    \hspace{-0.3cm}&=&\hspace{-0.3cm}
    \sup_{(\pi,c)\in\mathcal{A}(x)}\mathbb{E}\left[\int_t^{T\wedge\tau}e^{-\delta(s-t)}\frac{c_s^{1-p}}{1-p}ds+e^{-\delta(T\wedge \tau-t)}\frac{X_{T\wedge\tau}^{1-p}}{1-p}\bigg|X_t=x,z_t=z\right].\nonumber
    \end{eqnarray}
\end{itemize}
For simplicity, we write $\mathbb{E}[\cdot|X_t=x,z_t=z]$ as $\mathbb{E}[\cdot]$ in the following. 

We first prove that condition (i) holds. By the form of $\pi^*$, $c^*$ and Remark \ref{rmk:omega}, we know that the consumption running maximum process $z^*_s$ satisfies $z^*_s=\frac{M^*_s}{\omega^*(s)}$ for all $s\in[t,T\wedge\tau]$ with $M^*_s$ given by \eqref{sde*}. Hence, if the processes $X^*_s$ and $z^*_s$ satisfying \eqref{sde*} exist, then it is easy to check that $(\pi^*_t,c^*_t)=(\pi^*(X^*_t,z^*_t,t), c^*(X^*_t,z^*_t,t))$ are admissible. To show that $\eqref{sde*}$ has a unique strong solution, by Theorem 7 in section 3 of Chapter 5 of \cite{PE}, one needs to show that the functionals
\begin{align*}
G(s,X):=\pi^*\left(X_s,\frac{1}{\omega^*(t)}\max\left\{\omega^*(t)z,\sup_{t\leq u\leq s}X_u\right\},s\right)
\end{align*}
and
\begin{align*}
F(s,X):=c^*\left(X_s,\frac{1}{\omega^*(t)}\max\left\{\omega^*(t)z,\sup_{t\leq u\leq s}X_u\right\},s\right)
\end{align*}
defined for $s\geq t$ and for continuous functions $X:\mathbb{R}^+\rightarrow\mathbb{R}^+$, are functional Lipschitz in the sense of \cite{PE}.
Note that one can apply the similar arguments as used in Appendix A of \cite{ABY19} to prove the Lipschitz property of the feedback functions $\pi^*$ and $c^*$.
Therefore, for any $s\in[t,T\wedge\tau]$ and continuous functions $X$ and $Y$, we have
\begin{eqnarray}
|G(s,X)-G(s,Y)|
\hspace{-0.3cm}&\leq&\hspace{-0.3cm} K\left[|X_s-Y_s|+\left|\max\left\{\omega^*(t)z,\sup_{t\leq u\leq s}X_u\right\}-\max\left\{\omega^*(t)z,\sup_{t\leq u\leq s}Y_u\right\}\right|\right]
\nonumber\\
\hspace{-0.3cm}&\leq&\hspace{-0.3cm} 
2K\sup_{t\leq u\leq s}|X_u-Y_u|.\nonumber
\end{eqnarray}
Hence, $G$ is functional Lipschitz. Similarly, $F$ is also functional Lipschitz.

To prove condition (ii), let us introduce a sequence of stopping times as
$$\tau_n:=n\wedge T\wedge\tau \wedge\inf\left\{s>t; \int_t^se^{-\delta(u-t)}(\pi^*_u)^2(\partial_xV(X^*_u,z^*_u,u))^2du\geq n\text{ or }z^*_s\geq n \right\},\quad n\geq t,$$ with $\tau_n\rightarrow T\wedge\tau$ as $n\rightarrow\infty$. By an application of It\^o's formula to $e^{-\delta(s-t)}V(X^*_{s},z^*_{s},s)$ on $[t,\tau_n]$, we have
\begin{eqnarray}
\hspace{-0.3cm}&&\hspace{-0.3cm}
e^{-\delta(\tau_n-t)}V(X^*_{\tau_n},z^*_{\tau_n},\tau_n)
\nonumber\\
\hspace{-0.3cm}&=&\hspace{-0.3cm}
V(x,z,t)-\int_t^{\tau_n}e^{-\delta(s-t)}\frac{(c^*_s)^{1-p}}{1-p}ds+\int_t^{\tau_n}e^{-\delta(s-t)}\partial_tV(X^*_s,z^*_s,s)ds
\nonumber\\
\hspace{-0.3cm}&&\hspace{-0.3cm}
+\int_t^{\tau_n}e^{-\delta(s-t)}\left[\frac{1}{2}\sigma^2(\pi^*_s)^2\partial_{xx}V(X^*_s,z^*_s,s)+(\mu\pi^*_s-c^*_s)\partial_xV(X^*_s,z^*_s,s)\right.
\nonumber\\
\hspace{-0.3cm}&&\hspace{-0.3cm}
\left.-\delta V(X^*_s,z^*_s,s)+\frac{(c^*_s)^{1-p}}{1-p}\right]ds+\int_t^{\tau_n}e^{-\delta(s-t)}\sigma\pi^*_s\partial_xV(X^*_s,z^*_s,s)dW_s
\nonumber\\
\hspace{-0.3cm}&&\hspace{-0.3cm}
+\int_t^{\tau_n}e^{-\delta(s-t)}\partial_zV(X^*_s,z^*_s,s)d(z^*_s)^c
+\sum_{t\leq s\leq \tau_n}e^{-\delta(s-t)}\left(V(X^*_s,z^*_{s},s)-V(X^*_s,z^*_{s-},s)\right),\nonumber
\end{eqnarray}
where $(z^*_s)^c$ stands for the continuous part of $z^*_s$. Taking expectations on both sides, we obtain
\begin{align}
\label{V.dpp}
&V(x,z,t)
\nonumber\\
&
=\mathbb{E}\left[\int_t^{\tau_n}e^{-\delta(s-t)}\frac{(c^*_s)^{1-p}}{1-p}ds+e^{-\delta(\tau_n-t)}V(X^*_{\tau_n},z^*_{\tau_n},\tau_n)\right]
\nonumber\\
&
-\mathbb{E}\left[\int_t^{\tau_n}e^{-\delta(s-t)}\left[\frac{1}{2}\sigma^2(\pi^*_s)^2\partial_{xx}V(X^*_s,z^*_s,s)
\right.\right.\nonumber\\
&
\left.\left.
+(\mu\pi^*_s-c^*_s)\partial_xV(X^*_s,z^*_s,s)
-\delta V(X^*_s,z^*_s,s)+\frac{(c^*_s)^{1-p}}{1-p}\right]ds\right]
\nonumber\\
&
-\mathbb{E}\left[\int_t^{\tau_n}e^{-\delta(s-t)}\partial_zV(X^*_s,z^*_s,s)d(z^*_s)^c
+\sum_{t\leq s\leq \tau_n}e^{-\delta(s-t)}\left(V(X^*_s,z^*_{s},s)-V(X^*_s,z^*_{s-},s)\right)\right].
\end{align}
By plugging back the feedback controls in \eqref{solpi*} and \eqref{solc*} and the fact that $V$ is the solution to problem \eqref{HJBV}, for $(X^*_s,z^*_s,s)\in\mathcal{C}$, we have
\begin{align}\label{V.term1}
&\mathbb{E}\Bigg[\int_t^{\tau_n}e^{-\delta(s-t)}\Big[\frac{1}{2}\sigma^2(\pi^*_s)^2\partial_{xx}V(X^*_s,z^*_s,s)+(\mu\pi^*_s-c^*_s)\partial_xV(X^*_s,z^*_s,s)
\nonumber\\
&-\delta V(X^*_s,z^*_s,s)+\frac{(c^*_s)^{1-p}}{1-p}\Big]ds\Bigg]=0,
\end{align}
and hence
\begin{align}\label{V.term2}
\mathbb{E}\left[\int_t^{\tau_n}e^{-\delta(s-t)}\partial_zV(X^*_s,z^*_s,s)d(z^*_s)^c
+\sum_{t\leq s\leq \tau_n}e^{-\delta(s-t)}\left(V(X^*_s,z^*_{s},s)-V(X^*_s,z^*_{s-},s)\right)\right]=0.
\end{align}
For $(X^*_s,z^*_s,s)\in\mathcal{D}$ and satisfying $X^*_s=\omega^*(s)z_s$, by the continuity of $V$, $\partial_xV$ and $\partial_{xx}V$ as well as the fact that $\partial_zV(X^*_s,z^*_s,s)=0$, we have \eqref{V.term1} and \eqref{V.term2} hold. Furthermore, for $(X^*_s,z^*_s,s)\in\mathcal{D}$ and satisfying $X^*_s>\omega^*(s)z_{s-}$, by employing the feedback controls, it occur only at the initial time $s=t$ and $z^*_t$ immediately jumps from $z^*_{t-}$ to a new global maximum level $z^*_t=\frac{X^*_t}{\omega^*(t)}$, which together with the fact that $\partial_zV(X^*_s,z^*_s,s)=0$ yields \eqref{V.term1} and \eqref{V.term2}.
Then, it follows from \eqref{V.dpp} that
$$V(x,z,t)=\mathbb{E}\left[\int_t^{\tau_n}e^{-\delta(s-t)}\frac{(c^*_s)^{1-p}}{1-p}ds+e^{-\delta(\tau_n-t)}V(X^*_{\tau_n},z^*_{\tau_n},\tau_n)\right].$$
Letting $n\rightarrow\infty$ in above equation and using Monotone Convergence Theorem, we obtain
\begin{eqnarray}\label{conv.1}
\lim_{n\rightarrow\infty}\mathbb{E}\left[\int_t^{\tau_n}e^{-\delta(s-t)}\frac{(c^*_s)^{1-p}}{1-p}ds\right]=\mathbb{E}\left[\int_t^{T\wedge\tau}e^{-\delta(s-t)}\frac{(c^*_s)^{1-p}}{1-p}ds\right].
\end{eqnarray}

Additionally, note that the value function of the utility maximization problem on terminal wealth under a drawdown constraint with power utility function is less than the value function of Merton investment problem with power utility function ($\alpha=0$ in our context) that
$$\sup_{(\pi,c)\in\mathcal{A}(x)}\mathbb{E}\left[\frac{1}{1-p}X_{\tau_n}^{1-p}\right]\leq \sup_{(\pi,c)\in\mathcal{A}_0(x)}\mathbb{E}\left[\frac{1}{1-p}X_{\tau_n}^{1-p}\right],$$
where $\mathcal{A}_0(x)$ is the set of admissible strategies for Merton problem. Similar to Lemma A.3 of \cite{ABY19}, there exists a constant $M>0$ such that $0\leq V(x,z,t)\leq \frac{x^{1-p}}{1-p} M$ for all $x\geq 0$, independent of $z$ and $t$. Therefore, it holds that
$$\mathbb{E}\left[e^{-\delta(\tau_n-t)}V(X^*_{\tau_n},z^*_{\tau_n},\tau_n)\right]\leq M\mathbb{E}\left[e^{-\delta(\tau_n-t)}\frac{(X^*_{\tau_n})^{1-p}}{1-p}\right]\leq M
\sup_{(\pi,c)\in\mathcal{A}_0(x)}\mathbb{E}\left[\frac{X_{\tau_n}^{1-p}}{1-p}\right].
$$
Using the standard transversality condition in the Merton problem, the fact $V(x,z,t)\geq 0$, the terminal condition $V(x,z,T)=\frac{x^{1-p}}{1-p}$, the boundary condition $V(0,z,t)=0$, and the Dominated Convergence Theorem, we have that
\begin{eqnarray}\label{conv.2}
\lim_{n\rightarrow\infty}\mathbb{E}\left[e^{-\delta(\tau_n-t)}V(X^*_{\tau_n},z^*_{\tau_n},\tau_n)\right]
\hspace{-0.3cm}&=&\hspace{-0.3cm}
\mathbb{E}\left[e^{-\delta(T\wedge\tau-t)}V(X^*_{T\wedge\tau},z^*_{T\wedge\tau},T\wedge\tau)\right]
\nonumber\\
\hspace{-0.3cm}&=&\hspace{-0.3cm}
\mathbb{E}\left[e^{-\delta(T\wedge\tau-t)}\frac{(X^*_{T\wedge\tau})^{1-p}}{1-p}\right].  
\end{eqnarray}
Combining \eqref{conv.1} and \eqref{conv.2}, we deduce that the first equality in condition (ii) holds.

Finally, in view that the feedback controls $(\pi^*_t,c^*_t):=(\pi^*(X^*_t,z^*_t,t), c^*(X^*_t,z^*_t,t))$ are admissible, we readily obtain 
$$V(x,z,t)\leq \sup_{(\pi,c)\in\mathcal{A}(x)}\mathbb{E}\left[\int_t^{T\wedge\tau}e^{-\delta(s-t)}\frac{c_s^{1-p}}{1-p}ds+e^{-\delta(T\wedge \tau-t)}\frac{X_{T\wedge\tau}^{1-p}}{1-p}\right].$$

For the inverse inequality, repeating the arguments in the proof of \eqref{V.dpp} with any admissible strategies $(\pi,c)\in\mathcal{A}(x)$ and corresponding state process $(X,z)$, we have
\begin{align*}
&V(x,z,t)
\nonumber\\
&=\mathbb{E}\left[\int_t^{\tau_n}e^{-\delta(s-t)}\frac{(c_s)^{1-p}}{1-p}ds+e^{-\delta(\tau_n-t)}V(X_{\tau_n},z_{\tau_n},\tau_n)\right]
\nonumber\\
&-\mathbb{E}\Bigg[\int_t^{\tau_n}e^{-\delta(s-t)}\Big[\frac{1}{2}\sigma^2(\pi_s)^2\partial_{xx}V(X_s,z_s,s)+(\mu\pi_s-c_s)\partial_xV(X_s,z_s,s) \\
&-\delta V(X_s,z_s,s)+\frac{(c_s)^{1-p}}{1-p}\Big]ds\Bigg]
\nonumber\\
&-\mathbb{E}\left[\int_t^{\tau_n}e^{-\delta(s-t)}\partial_zV(X_s,z_s,s)d(z_s)^c
+\sum_{t\leq s\leq \tau_n}e^{-\delta(s-t)}\left(V(X_s,z_{s},s)-V(X_s,z_{s-},s)\right)\right].\nonumber
\end{align*}
Because $V(x,z,t)$ is the unique classical solution to problem \eqref{HJBV}, the second term and the third term on the right side of the above equation are non-negative, we obtain that
\begin{eqnarray}
V(x,z,t)\geq 
\mathbb{E}\left[\int_t^{\tau_n}e^{-\delta(s-t)}\frac{(c_s)^{1-p}}{1-p}ds+e^{-\delta(\tau_n-t)}V(X_{\tau_n},z_{\tau_n},\tau_n)\right].\nonumber
\end{eqnarray}
Letting $n\rightarrow\infty$ and applying the similar arguments in the proof of condition (i),  we conclude that the reverse inequality holds, which completes the proof.
\end{proof}

\section{Conclusions}\label{sec:con}
We revisit the optimal consumption problem under drawdown constraint formulated in \cite{ABY19} by featuring the finite investment horizon. For this stochastic control problem under control-type constraint, we contribute to the theoretical study on the existence and uniqueness of the classical solution to the parabolic HJB variational inequality. In particular, the consumption drawdown constraint induces some time-dependent free boundaries that deserve careful investigations. Using the dual transform and considering the auxiliary variational inequality with both function and gradient constraints, we develop some technical arguments to obtain the regularity of the unique solution as well as some analytical characterization of the associated time-dependent free boundaries such that the smooth fit conditions hold. As a result, we are able to derive and verify the optimal portfolio and consumption strategies in the piecewise feedback form.     

For future research extensions, it will be interesting to study other finite-time horizon optimal consumption problems when the utility function depends on the endogenous reference with respect to the past consumption maximum such as the formulation in \cite{DLPY} and \cite{LiYuZ2021arXiv}. Some new techniques are needed due to more wealth regimes and free boundary curves. It is also an appealing problem to study the finite-time horizon optimal consumption problems under habit formation constraint as investigated in \cite{Bah2022} and \cite{Bah2023}, where the dual transform can no longer linearize the variational inequality. New technical tools are needed to cope with the nonlinear dual parabolic variational inequality.

\ \\
\noindent
\textbf{Acknowledgements}: The authors sincerely thank anonymous referees for their valuable comments and suggestions that improved the paper significantly. X. Chen is supported by  NNSF of China  no.12271188.  X. Li is supported by Hong Kong RGC grants under no. 15216720 and 15221621. F. Yi is supported by NNSF of China  no.12271188 and 12171169.
X. Yu is supported by the Hong Kong RGC General Research Fund (GRF) under grant no. 15306523 and the Hong Kong Polytechnic University research grant under no. P0039251.

\end{document}